\numberwithin{equation}{section}
\theoremstyle{plain}
\newtheorem{theorem}{Theorem}[section]
\newtheorem{lemma}[theorem]{Lemma}
\newtheorem{corollary}[theorem]{Corollary}
\newtheorem{proposition}[theorem]{Proposition}
\theoremstyle{definition}
\newtheorem{definition}[theorem]{Definition}
\newtheorem{example}[theorem]{Example}
\theoremstyle{remark}
\newtheorem{remark}[theorem]{Remark}
\newtheorem{case[theorem]}{Case}
\date{\today}      
\author{A. Iosevich, I. Li, Z. Li, and K. Yu} 
\address{Department of Mathematics, University of Rochester, Rochester, NY}
\email{iosevich@math.rochester.edu}
\address{Department of Mathematics, University of Rochester, Rochester, NY}
\email{ili3@u.rochester.edu }
\address{Department of Mathematics, University of Rochester, Rochester, NY}
\email{zli154@ur.rochester.edu }
\address{Department of Mathematics, University of Rochester, Rochester, NY}
\email{kyu26@ur.rochester.edu }
\begin{document}

\title[Orlicz spaces and the uncertainty principle]{Orlicz spaces and the uncertainty principle}  

\begin{abstract} 
Let $f:\mathbb{Z}_N^d \to \mathbb{C}$ be a signal. The classical uncertainty principle gives us the following: the product of the support of $f$ and the support of $\hat{f}$, the Fourier transform of $f$, must satisfy 
$|supp(f)|\cdot|supp(\hat{f})|\geq |G|$. Recently, Iosevich and Mayeli improved the uncertainty principle for signals with Fourier supported on generic sets; this was done by employing the restriction theory. in $L^p$ spaces. In this paper, we extended the $(p,q)-$restriction setting to Orlicz spaces. In particular, we introduce 
$(\Phi,\Psi)-$restriction estimates and use them to obtain sharper uncertainty principles, results on annihilating pairs, and generalizations involving $\Lambda_\Phi-$sets in the spirit of Bourgain’s $\Lambda_p$ theorem. Then we apply uncertainty principles to the problem of exact recovery, which again extends and recovers the result that Iosevich and Mayeli (\cite{iosevich2023uncertaintyprinciplesfiniteabelian}) obtained in Lebesgue spaces. 
\end{abstract}  

\maketitle

\tableofcontents

\section{Introduction} 

The classical uncertainty principle in ${\mathbb Z}_N^d$ (see \cite{Donoho1989} and \cite{MZ1973}) says that if the signal 
$f: {\mathbb Z}_N^d \to {\mathbb C}$ is supported in $E \subset {\mathbb Z}_N^d$, and the Fourier transform $\widehat{f}$ is supported in $S \subset {\mathbb Z}_N^d$, then 
$$ |E| \cdot |S| \ge N^d.$$

This fundamental idea has many applications, not the least of which is to the exact signal recovery. Using this uncertainty principle, Matolcsi and Szuks (\cite{MZ1973}) and, independently, Donoho and Stark (\cite{Donoho1989}) proved that if $f$ is a signal supported in $E$, with the frequencies ${\{\widehat{f}(m)\}}_{m \in S}$ unobserved, then $f$ can be recovered exactly and uniquely provided that 
$$ |E| \cdot |S|<\frac{N^d}{2}.$$

Iosevich and Mayeli \cite{iosevich2023uncertaintyprinciplesfiniteabelian} established stronger Fourier uncertainty principles under the assumption that the Fourier support is suitably generic, for example, if the Fourier support satisfies the $(p,q)-$Fourier restriction estimate, which restricts the $L^q$ norm of the Fourier transform on a set $S$ by the $L^p$ norm of the original function. Iosevich, Jaming, and Mayeli recently proved the corresponding annihilating pairs results in \cite{jaming:hal-04953516}. We extend these results to a more general context by allowing our estimate to restrict between $L^\Psi$ norms and $L^\Phi$ norms, in the sense of Orlicz. Then we explore the uncertainty principle we can get under the assumption that the Fourier support of the given function is a $\Lambda_\Phi-$set. Roughly speaking, the results state that when a function and its Fourier transform are both concentrated in a pair of sets satisfying suitable size assumptions, the function must be small. This result is fundamentally linked to the uncertainty principle in Fourier analysis \cite{Donoho1989, Havin_Joricke_1994}, which states that a nonzero function and its Fourier transform cannot be simultaneously very small.

Motivated by the structure of $(p,q)-$restriction estimates and their role in deriving sharper uncertainty bounds, we generalize this notion to Orlicz spaces. Since Orlicz spaces generalize $L^p$ spaces, it is natural to consider $(\Phi,\Psi)-$restriction estimates, where $\Phi$ and $\Psi$ are nice Young functions satisfying $x \prec \Phi \prec \Psi$. Under the assumption that the Fourier support set $S$ satisfies a $(\Phi, \Psi)-$restriction estimate, we derive a family of uncertainty principles that interpolate between classical $L^p$ bounds and new inequalities involving Orlicz growth. These results recover and generalize the bounds in \cite{iosevich2023uncertaintyprinciplesfiniteabelian} when $\Phi(x) = x^p$, $\Psi(x) = x^q$.

We then consider $\Lambda_\Phi-$sets, which generalize the classical $\Lambda_p-$sets introduced by Rudin \cite{Rudin}. Bourgain's celebrated theorem shows that for $p > 2$, one can construct sets $S$ of relatively large size such that any function in $L^2$ with Fourier support in $S$ is also in $L^p$, with a bound independent of the ambient group size. More recently, Ryou \cite{Ryou_2022}, Limonova \cite{MR4720895}, and Burstein \cite{burstein2025lambdap} have extended this result to Orlicz spaces, establishing $\Lambda_\Phi$ inequalities for various classes of Young functions $\Phi$. These results allow us to construct sets with desirable Orlicz norm control, and we use them to derive uncertainty principles under the assumption that the Fourier support of a function lies in a $\Lambda_\Phi-$set. In particular, we obtain lower bounds on the spatial support of the function, expressed in terms of the inverse of $\Phi$ and constants depending only on the associated Orlicz space.

In addition to uncertainty principles, we study exact recovery when the Fourier transform $\hat{f}$ is partially missing on a subset $S \subset \mathbb{Z}_N^d$. Under a $(\Phi, \Psi)-$type Orlicz restriction estimate, we show that $f$ can be uniquely recovered. This generalizes previous results in the $(p, q)$ setting by Iosevich. Furthermore, based on Ryou's Orlicz space extension of Bourgain's $\Lambda_p-$theorem, we also establish high probability recovery when $S$ is chosen randomly, revealing a deep connection between uncertainty, randomness, and recovery in Orlicz space.

The main results of this paper are Theorem \ref{thm: UP via Restriction Estimation I}, Theorem \ref{thm: UP via Restriction Estimation II}, Theorem \ref{theorem: An Uncertainty Principle via Lambda-Phi} that give us sharper uncertainty principles. Applying these uncertainty principles, we get results of exact signal recovery: Theorem \ref{thm: exact recovery via restriction estimation}, Theorem \ref{thm: exact recovery in the presence of randomness}.

\section{Preliminaries}
In this paper, we only consider the finite groups over cyclic groups, $\mathbb{Z}_N^d$, which is $d-$dimensional ${\mathbb Z}_N.$ Given a signal $f:\mathbb{Z}_N^d\to \mathbb{C},$ its Fourier transform is denoted by $\hat{f}$, where $\hat{f}:\mathbb{Z}_N^d\to \mathbb{C}$ is defined by
\begin{equation}
    \hat{f}(m)= \frac{1}{N^d}\sum_{x\in \mathbb{Z}_N^d}\chi(-x\cdot m)f(x).
\end{equation}
$\chi(-x\cdot m)$ here is a character function defined by $\chi(t) = e^\frac{2\pi i t}{N}$, for any $t\in \mathbb{Z}_N$, and $-x\cdot m $ is the dot product in $\mathbb{Z}_N^d$, where $m$ is an element of the dual group $\widehat{\mathbb{Z}_N^d}$, which is equivalent to $\mathbb{Z}_N^d$. 

\vskip.125in 

The Plancherel identity is given by 
\begin{equation}
    \sum_{x\in \mathbb{Z}_N^d}|f(x)|^2 = N^d\sum_{m\in \mathbb{Z}_N^d}|\hat{f}(m)|^2.
\end{equation}
The Fourier inversion formula is given by
\begin{equation}
    f(x) = \sum_{m\in \mathbb{Z}_N^d}\chi(x\cdot m)\hat{f}(m).
\end{equation}
The classical uncertainty principle can be derived from the inverse Fourier transform. 
\begin{proposition}[\cite{Donoho1989}]
    Let $f: \mathbb{Z}_N^d \to \mathbb{C}$, and $supp(f) = E, supp(\hat{f}) = S$. Then 
    \begin{equation*}
        |E| \cdot|S| \geq N^d.
    \end{equation*}
\end{proposition}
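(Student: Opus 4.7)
The plan is to chain together two elementary pointwise bounds: one estimating $\|\hat f\|_\infty$ in terms of $\|f\|_1$ (hence $\|f\|_\infty$ and $|E|$), and a dual one estimating $\|f\|_\infty$ in terms of $\|\hat f\|_1$ (hence $\|\hat f\|_\infty$ and $|S|$). Composing them will produce a factor of $|E|\cdot|S|/N^d$ on $\|f\|_\infty$, at which point nontriviality of $f$ forces that factor to be at least $1$.

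More concretely, I would first assume $f$ is not identically zero, so that $\|f\|_\infty>0$. From the definition
\[
\hat f(m)=\frac{1}{N^d}\sum_{x\in\mathbb{Z}_N^d}\chi(-x\cdot m)f(x),
\]
and using that $f$ is supported in $E$, I obtain $|\hat f(m)|\le N^{-d}\|f\|_1\le N^{-d}|E|\,\|f\|_\infty$ for every $m$, so $\|\hat f\|_\infty\le N^{-d}|E|\,\|f\|_\infty$. Next, from the Fourier inversion formula
\[
f(x)=\sum_{m\in\mathbb{Z}_N^d}\chi(x\cdot m)\hat f(m)=\sum_{m\in S}\chi(x\cdot m)\hat f(m),
\]
since $\hat f$ is supported in $S$, the triangle inequality gives $|f(x)|\le |S|\,\|\hat f\|_\infty$ for every $x$, hence $\|f\|_\infty\le |S|\,\|\hat f\|_\infty$.

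Combining the two inequalities yields
\[
\|f\|_\infty\le |S|\cdot N^{-d}|E|\,\|f\|_\infty=\frac{|E|\cdot|S|}{N^d}\,\|f\|_\infty.
\]
Dividing by $\|f\|_\infty>0$ gives the desired $|E|\cdot|S|\ge N^d$. There is no genuine obstacle here: the proof is a one-line composition of the trivial $L^1\!\to\!L^\infty$ bound for the Fourier transform with its inverse. The only point worth being careful about is the normalization constant $N^{-d}$ in the definition of $\hat f$ used in this paper, which is exactly what is needed to make the two bounds combine cleanly into the factor $|E|\cdot|S|/N^d$; no Plancherel or Cauchy--Schwarz argument is required.
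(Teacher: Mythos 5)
Your proof is correct and is precisely the argument the paper alludes to when it remarks that ``the classical uncertainty principle can be derived from the inverse Fourier transform''; indeed, the same chain of pointwise bounds $\|\hat f\|_\infty \le N^{-d}\|f\|_{L^1} \le N^{-d}|E|\|f\|_\infty$ and $\|f\|_\infty \le |S|\|\hat f\|_\infty$ reappears verbatim inside the paper's later Lemma~\ref{lem: Comparing Orlicz norm and $L^1-$norm}. Your explicit note that one must assume $f\not\equiv 0$ (so that $\|f\|_\infty>0$ can be divided out) is a small but correct point of hygiene that the paper leaves implicit.
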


\vskip.125in 

The purpose of this paper is to understand the Fourier Uncertainty Principle in the setting of Orlicz spaces. These spaces are generalizations of the classical $L^p$ spaces and are defined as follows. A function $\Phi: [0,\infty) \to [0,\infty]$ is a \textit{Young function} if it is convex, and $\Phi(x) = \underset{x \to 0}{\lim} \Phi(x)= 0$, $\underset{x \to \infty}{\lim} \Phi(x)= \infty$.
The \textit{complementary Young function} $\Psi$ to $\Phi$ is defined by:
\begin{equation}
    \Psi(y) = \sup\left\{xy-\Phi(x):x\geq0\right\},y\in[0,\infty). 
\end{equation}
$(\Phi,\Psi)$ is called a pair of complementary Young functions which satisfies the Young's inequality:
\begin{equation}
    |xy|\leq \Phi(x)+\Psi(y).
\end{equation}
A continuous Young function $\Phi$ is termed a \textit{nice Young function} ($N-$function) if it satisfies both $\underset{x \to 0}{\lim} \frac{\Phi(x)}{x} = 0$ and $\underset{x \to \infty}{\lim} \frac{\Phi(x)}{x} = \infty$. 
The space 
\begin{equation*}
    L^\Phi(\mathbb{Z}_N^d) =
    \left\{f:\mathbb{Z}_N^d \to \mathbb{C} :\sum_{x\in \mathbb{Z}_N^d} \Phi \left(\frac{|f(x)|}{k}\right)\leq 1\text{ for some } k>0\right\}
\end{equation*}
is called an \textit{Orlicz space} on $\mathbb{Z}_N^d$ with counting measure. 
For $f\in L^{\Phi}$, its \textit{Orlicz norm} is defined by
\begin{equation*}
    \|f\|_{L^{(\Phi)}(\mathbb{Z}_N^d)}= 
    \sup\left\{\left|\sum_{x\in\mathbb{Z}_N^d}
    fg \right|: \sum_{x\in\mathbb{Z}_N^d} \Psi(|g|)\leq 1\right\},
\end{equation*}
where $\Psi$ is the complementary Young function to $\Phi$. 
The \textit{gauge norm} is defined as follows 
\begin{equation*}
    \|f\|_{L^{\Phi}(\mathbb{Z}_N^d)} =  
    \inf\left\{k>0: \sum_{x\in \mathbb{Z}_N^d}\Phi \left(\frac{|f|}{k}\right)\leq 1 \right\},
\end{equation*}
where we have $\|f\|_{L^{\Phi}}\approx \|f\|_{L^{(\Phi)}}$. For the remainder of the paper, we will refer to both the gauge and Orlicz norms as Orlicz norms, unless otherwise specified. 
To simplify the presentation, we will use the following notations for Orlicz norms and Orlicz norms after normalization throughout this paper. Given $f: \mathbb{Z}_N^d \to \mathbb{C}$ and a set $A\subset \mathbb{Z}_N^d$, 
\begin{equation}
    \|f\|_{L^{\Phi}(A)}= \inf\left\{k>0: \sum_{x\in A}\Phi \left( \frac{|f|}{k}\right)\leq 1\right\},
\end{equation}
\begin{equation}
    \|f\|_{L^{\Phi}(\mu_A)}=\inf\left\{k>0: \frac{1}{|A|}\sum_{x\in A}\Phi \left(\frac{|f|}{k}\right)\leq 1\right\},
\end{equation}
and when $A= \mathbb{Z}_N^d$, we denote $\|f\|_{L^\Phi(\mu)}:= \|f\|_{L^\Phi(\mu_{\mathbb{Z}_N^d})}.$

We say that a nice Young function $\Phi\in \Delta_2$ if there exist $x_0>0, K_1> 2$ such that $\Phi(2x)\leq K_1\Phi(x)$ for $x\geq x_0$. And $\Phi\in \nabla_2$ if there exist $x_0>0, K_2> 1$ such that $\Phi\left( K_2x\right)\geq 2K_2\Phi(x)$ for $x\geq x_0$.
A set of nice Young functions can be partially ordered in the following way: if $\Phi_1$ and $\Phi_2$ are nice Young functions and there exist constants $x_0>0, K>0,$ such that $\Phi_1(x)\leq K\Phi_2(x)$ for all $x\geq x_0$, we say that $\Phi_1 \prec \Phi_2.$ If both $\Phi_1 \prec \Phi_2,$ and $\Phi_2 \prec \Phi_1$, we have that they are equivalent which is denoted by $\Phi_1\sim \Phi_2$.

\vskip.125in 

We also define the \textit{Matuszewska-Orlicz} indices as follows,

\begin{align*}
M_a(t,\Phi) &:= \sup_{x>0} \frac{\Phi(tx)}{\Phi(x)}, & M_\infty(t,\Phi) &:= \limsup_{x\to \infty} \frac{\Phi(tx)}{\Phi(x)},\\
\alpha_\Phi^i&:= \lim_{t\to 0^+} \frac{\log M_i(t,\Phi)}{\log t}, & \beta_\Phi^i&:= \lim_{t\to \infty} \frac{\log M_i(t,\Phi)}{\log t}.
\end{align*}

\section{Practical lemmas}
\begin{theorem}[Hölder's Inequality in Orlicz space \cite{rao2002applications}]\label{Holder's theorem}
    Given a nice Young function $\Phi$, and two functions $f\in L^\Phi(\mu), g\in L^{\Phi^\star}(\mu)$, we have
    \begin{equation}
        \|fg\|_{L^1(\mu)} \leq 2\|f\|_{L^{\Phi}(\mu)}\|g\|_{L^{\Phi^*}(\mu)},
    \end{equation}
    where $\Phi^*$ is the complementary Young function of $\Phi$.
\end{theorem}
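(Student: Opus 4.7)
The plan is to reduce to the pointwise Young inequality by homogeneity, so all the work concentrates in the bookkeeping of the gauge-norm infimum. First I would pick arbitrary scalars $k_f > \|f\|_{L^{\Phi}(\mu)}$ and $k_g > \|g\|_{L^{\Phi^*}(\mu)}$. By the definition of the gauge norm, these choices are admissible in the respective infima, so
\begin{equation*}
\frac{1}{|\mathbb{Z}_N^d|}\sum_{x\in \mathbb{Z}_N^d}\Phi\!\left(\frac{|f(x)|}{k_f}\right)\leq 1, \qquad \frac{1}{|\mathbb{Z}_N^d|}\sum_{x\in \mathbb{Z}_N^d}\Phi^*\!\left(\frac{|g(x)|}{k_g}\right)\leq 1.
\end{equation*}

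Next I would apply Young's inequality pointwise with the pair $(\Phi,\Phi^*)$, which is exactly the inequality recorded in the preliminaries: for every $x$,
\begin{equation*}
\frac{|f(x)|}{k_f}\cdot\frac{|g(x)|}{k_g} \leq \Phi\!\left(\frac{|f(x)|}{k_f}\right)+\Phi^*\!\left(\frac{|g(x)|}{k_g}\right).
\end{equation*}
Averaging over $\mathbb{Z}_N^d$ and using the two preceding bounds yields
\begin{equation*}
\frac{1}{k_f k_g}\cdot\frac{1}{|\mathbb{Z}_N^d|}\sum_{x\in\mathbb{Z}_N^d}|f(x)g(x)| \leq 2,
\end{equation*}
which is precisely $\|fg\|_{L^1(\mu)} \leq 2 k_f k_g$. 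Finally I would send $k_f \downarrow \|f\|_{L^{\Phi}(\mu)}$ and $k_g \downarrow \|g\|_{L^{\Phi^*}(\mu)}$ to conclude.

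The structure is entirely mechanical once the pointwise Young inequality and the normalized gauge definition are on the table, so there is essentially no conceptual obstacle. The only place requiring a bit of care is the step where one wants the inequality $\frac{1}{|\mathbb{Z}_N^d|}\sum \Phi(|f|/k_f) \leq 1$ to follow from $k_f > \|f\|_{L^{\Phi}(\mu)}$; this needs the monotonicity of $\Phi$ on $[0,\infty)$ together with the fact that $k_f$ lies strictly above the infimum, so some admissible $k \leq k_f$ realizes the modular bound and monotonicity passes the bound to $k_f$. The factor $2$ is intrinsic to the gauge normalization and cannot in general be improved without replacing $\|\cdot\|_{L^{\Phi}(\mu)}$ by the Orlicz (dual) norm $\|\cdot\|_{L^{(\Phi)}(\mu)}$, which is consistent with the equivalence $\|f\|_{L^{\Phi}}\approx \|f\|_{L^{(\Phi)}}$ noted earlier.
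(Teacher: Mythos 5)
Your proposal is correct and mirrors the paper's own argument: both take scalars strictly above the two Luxemburg norms, use monotonicity of the Young functions to confirm the modular bounds hold at those scalars, apply the pointwise Young inequality $(\Phi,\Phi^*)$, sum, and then let the scalars decrease to the norms. The paper spells out the monotonicity step with explicit intermediate parameters $0<k,t<1$, which is precisely the detail you flag as "the only place requiring a bit of care," so the two proofs are essentially identical.
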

\begin{proof}
    Let $f \in L^{\Phi}$ and $g \in L^{\Phi^*}$, let $u > \|f\|_{L^{\Phi}(\mu)}$ and $v > \|g\|_{L^{\Phi^*}(\mu)}$. Then we have
    \begin{align*}
        \left\|\frac{f}{u}\right\|_{L^{\Phi}(\mu)} \leq 1,    \\   
         \left\|\frac{g}{v}\right\|_{L^{\Phi^*}(\mu)} \leq 1.
    \end{align*}
        which means there exists $0<k<1$ and $0<t<1$ such that
    \begin{align*}
        \frac{1}{N^d}\sum_{x\in \mathbb{Z}_N^d} \Phi \left(\frac{|f|}{uk} \right) \leq 1,   \\
        \frac{1}{N^d}\sum_{x\in \mathbb{Z}_N^d} \Phi^*\left(\frac{|g|}{vt}\right) \leq 1.
    \end{align*}
    Since both $k$ and $t$ $\leq 1$, we have 
    \begin{align*}
        \frac{|f|}{u} \leq \frac{|f|}{uk},    \\
        \frac{|g|}{v} \leq \frac{|g|}{vt}.
    \end{align*}
    By the increasing property of Young functions, we have 
    \begin{align*}
        \frac{1}{N^d}\sum_{x\in \mathbb{Z}_N^d}\Phi \left(\frac{|f|}{u}\right) \leq \frac{1}{N^d}\sum_{x\in \mathbb{Z}_N^d}\Phi \left(\frac{|f|}{uk}\right) \leq 1,  \\
       \frac{1}{N^d} \sum_{x\in \mathbb{Z}_N^d}\Phi^* \left(\frac{|g|}{v}\right)  \leq \frac{1}{N^d}\sum_{x\in \mathbb{Z}_N^d}\Phi^*\left(\frac{|g|}{vt}\right) \leq 1.
    \end{align*}

    Then we can apply Young's inequality and get 
    \begin{equation*}
    \begin{split}
       \frac{1}{N^d} \sum_{x\in \mathbb{Z}_N^d} \frac{|f|}{u}\frac{|g|}{v} &\leq \frac{1}{N^d}\sum_{x\in \mathbb{Z}_N^d}\Phi \left(\frac{|f|}{u}\right) +\Phi^*\left(\frac{|g|}{v}\right)\\
       &= \frac{1}{N^d}\sum_{x\in \mathbb{Z}_N^d}\Phi\left(\frac{|f|}{u}\right) + \frac{1}{N^d}\sum_{x\in \mathbb{Z}_N^d}\Phi^*\left(\frac{|g|}{v}\right) \leq 2.        
    \end{split}
    \end{equation*}

    Multiply both sides by $uv$, and let $u\to \|f\|_{L^{\Phi}(\mu)}$, $v\to \|g\|_{L^{\Phi^*}(\mu)},$ we can get inequality
    \begin{equation}
        {||fg||}_{L^1(\mu)} = \frac{1}{N^d}\sum_{x\in \mathbb{Z}_N^d}{|f(x)|}{|g(x)|} \leq 2 \|f\|_{L^{\Phi}(\mu)} \|g\|_{L^{\Phi^*}(\mu)}
    \end{equation}
\end{proof}
\begin{remark}
    Note that by replacing one of the gauge norms on the right-hand side of inequality \ref{Holder's theorem} with the Orlicz norm, we can improve Hölder's inequality to the form of a constant $1$ as follows. 
\end{remark}
\begin{theorem}[Hölder's Inequality \cite{RaoRen}]{\label{holder}}
    If $\Phi,\Psi$ are complementary nice Young functions, for $f\in L^\Phi$, $g\in L^\Psi$, $A\subseteq\mathbb{Z}_N^d$, we have\begin{align*}
        \lVert fg\rVert_{L^1(\mu_A)}\leq\lVert f\rVert_{L^{(\Phi)}(\mu_A)}\lVert g\rVert_{L^\Psi(\mu_A)}.
    \end{align*}
\end{theorem}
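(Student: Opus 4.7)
The plan is to exploit the duality formulation built into the Orlicz (sup) norm $\|\cdot\|_{L^{(\Phi)}(\mu_A)}$, which is defined as the supremum of $\left|\frac{1}{|A|}\sum_{x\in A} fh\right|$ over all test functions $h$ with $\frac{1}{|A|}\sum_{x\in A}\Psi(|h|)\le 1$, and to use $g$ itself (after normalization and a sign twist) as the test function. Since the proof reduces to producing a single admissible $h$, I expect no significant obstacle; the key point is simply that the gauge norm of $g$ is exactly the scaling factor that makes $g$ admissible in the dual definition of $\|f\|_{L^{(\Phi)}}$.

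First, fix any $c > \|g\|_{L^\Psi(\mu_A)}$. By the definition of the gauge norm and the monotonicity of $\Psi$, this guarantees $\frac{1}{|A|}\sum_{x\in A}\Psi(|g(x)|/c) \le 1$. Next, define
\[
h(x) := \overline{\operatorname{sgn}(f(x))}\,\frac{|g(x)|}{c},
\]
where $\operatorname{sgn}(w) := w/|w|$ for $w\neq 0$ and $\operatorname{sgn}(0):=0$. Then $|h| = |g|/c$, so $h$ is admissible in the supremum defining $\|f\|_{L^{(\Phi)}(\mu_A)}$, and pointwise $f(x)h(x) = |f(x)||g(x)|/c \ge 0$.

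Applying the sup definition of $\|f\|_{L^{(\Phi)}(\mu_A)}$ to this particular $h$ yields
\[
\frac{1}{c}\cdot\frac{1}{|A|}\sum_{x\in A}|f(x)||g(x)| \;=\; \frac{1}{|A|}\sum_{x\in A} f(x)h(x) \;\le\; \|f\|_{L^{(\Phi)}(\mu_A)}.
\]
Multiplying through by $c$ and letting $c \searrow \|g\|_{L^\Psi(\mu_A)}$ gives the desired bound $\|fg\|_{L^1(\mu_A)} \le \|f\|_{L^{(\Phi)}(\mu_A)}\|g\|_{L^\Psi(\mu_A)}$. The degenerate case $\|g\|_{L^\Psi(\mu_A)}=0$ forces $g \equiv 0$ on $A$, so both sides vanish. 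The only small technical point worth flagging is the sign trick: it is what ensures that $f h$ collapses pointwise to $|f||g|/c$ even though $f$ is complex-valued. Everything else is a direct unpacking of the gauge and sup definitions of the Orlicz norms, so unlike the factor-$2$ Young-inequality argument of Theorem \ref{Holder's theorem}, here no factor of $2$ is lost.
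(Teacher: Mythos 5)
Your proof is correct and follows essentially the same route as the paper: pick $c$ slightly above $\|g\|_{L^\Psi(\mu_A)}$, observe that $|g|/c$ is then an admissible test function in the dual (sup) definition of $\|f\|_{L^{(\Phi)}(\mu_A)}$, bound the resulting pairing, and let $c$ decrease to the gauge norm. The only real difference is that you make the phase normalization (the $\overline{\operatorname{sgn}(f)}$ twist) explicit so that the pairing $\frac{1}{|A|}\sum_{x\in A}f h$ collapses to $\frac{1}{|A|}\sum_{x\in A}|f||g|/c$; the paper silently passes from $\left|\sum fh\right|$ to $\sum|f|\,|h|$, so your version fills in a small gap rather than diverging from the argument.
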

\proof Take some $u>\lVert g\rVert_{L^\Psi(\mu_A)}$. We have that\begin{align*}
    \lVert fg\rVert_{L^1(\mu_A)}&=\frac{1}{|A|}\sum_{x\in A}|f(x)g(x)| \\
    &=\frac{u}{|A|}\sum_{x\in A}|f(x)|\frac{|g(x)|}{u}.
\end{align*}
By definition of the Luxemburg norm, $\frac{1}{|A|}\sum_{x\in A}\Psi\left(\frac{|g(x)|}{u}\right)\leq1$, so by definition of of the Orlicz norm,\begin{align*}
    \frac{1}{|A|}\sum_{x\in A}|f(x)|\frac{|g(x)|}{u}\leq\lVert f\rVert_{L^{(\Phi)}(\mu_A)}.
\end{align*}
Taking $u\to\lVert g\rVert_{L^\Psi(\mu_A)}$, we obtain\begin{align*}
    \lVert fg\rVert_{L^1(\mu_A)}\leq\lVert f\rVert_{L^{(\Phi)}(\mu_A)}\lVert g\rVert_{L^\Psi(\mu_A)},
\end{align*}as desired.\endproof

\begin{lemma}[Orlicz norm of an indicator function]\label{lem: Orlicz norm of an indicator function} Let $\Phi$ be a nice Young function, and $E\subset \mathbb{Z}_N^d$, then we have the exact value of the norm of the indicator function $1_E$ as 
    \begin{equation}
        \|1_E\|_{L^{(\Phi)}(\mu)} =\frac{|E|}{N^d}(\Phi^\star)^{-1}\left(\frac{N^d}{|E|}\right),
    \end{equation}
where $\Phi^\star$ is the complementary Young function of $\Phi$.
\end{lemma}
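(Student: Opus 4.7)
The plan is to unfold the Orlicz (dual) norm definition against the normalized counting measure $\mu$, reduce the sup to test functions supported on $E$, and then collapse the resulting problem to a single-variable optimization via Jensen's inequality.

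First, I would write out the definition: with $\Psi := \Phi^\star$,
\[
\|1_E\|_{L^{(\Phi)}(\mu)} \;=\; \sup\Bigl\{\tfrac{1}{N^d}\!\sum_{x\in\mathbb{Z}_N^d} 1_E(x)\,|g(x)| \;:\; \tfrac{1}{N^d}\!\sum_{x\in\mathbb{Z}_N^d}\Psi(|g(x)|)\le 1\Bigr\},
\]
the sup being taken over $g:\mathbb Z_N^d\to\mathbb C$. I may assume $g\ge 0$ by passing to $|g|$. Moreover, replacing $g$ by $g\cdot 1_E$ leaves the objective unchanged while only decreasing $\sum\Psi(|g|)$, so it suffices to take the supremum over nonnegative $g$ supported on $E$.

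Next, I would invoke Jensen's inequality against the uniform probability measure on $E$. Writing $\bar g := \tfrac{1}{|E|}\sum_{x\in E} g(x)$, convexity of $\Psi$ gives
\[
\Psi(\bar g) \;\le\; \frac{1}{|E|}\sum_{x\in E}\Psi(g(x)) \;=\; \frac{N^d}{|E|}\cdot\frac{1}{N^d}\sum_{x\in\mathbb Z_N^d}\Psi(|g(x)|) \;\le\; \frac{N^d}{|E|}.
\]
Since $\Psi$ is nondecreasing on $[0,\infty)$, $\bar g \le \Psi^{-1}(N^d/|E|)$. The objective therefore satisfies
\[
\tfrac{1}{N^d}\sum_{x\in E}g(x) \;=\; \tfrac{|E|}{N^d}\,\bar g \;\le\; \tfrac{|E|}{N^d}(\Phi^\star)^{-1}\!\left(\tfrac{N^d}{|E|}\right),
\]
which gives the upper bound.

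Finally, I would verify that this bound is sharp by exhibiting an extremal test function: take $g = c\cdot 1_E$ with $c := (\Phi^\star)^{-1}(N^d/|E|)$. Then $\tfrac{1}{N^d}\sum \Psi(|g|) = \tfrac{|E|}{N^d}\Psi(c) = 1$, so $g$ is admissible, and the objective equals exactly $\tfrac{|E|}{N^d}\,c$, matching the upper bound. No step here is deep; the only mild subtlety is that if $\Phi^\star$ fails to be strictly increasing one should read $(\Phi^\star)^{-1}$ as its right-continuous generalized inverse, which is standard for nice Young functions and causes no trouble in the chain $\Psi(c)=N^d/|E|$ used above.
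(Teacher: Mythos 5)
Your proof is correct and follows essentially the same route as the paper's: Jensen's inequality applied to the averaged value of $g$ over $E$ for the upper bound, and the explicit test function $c\cdot 1_E$ with $c=(\Phi^\star)^{-1}(N^d/|E|)$ for sharpness. The only differences are cosmetic — you spell out the reduction to nonnegative $g$ supported on $E$ and flag the generalized-inverse subtlety, neither of which changes the argument.
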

\begin{proof}
By Jensen's inequality, we have that for $g\in L^{\Phi^\star}(\mu)$ satisfying the inequality $\frac{1}{N^d}\sum_{x\in \mathbb{Z}_N^d}\Phi^\star(|g|)\leq 1$, 
\begin{equation*}
    \Phi^\star \left(\frac{\sum_{x\in E}|g(x)|}{|E|} \right) \leq \frac{1}{|E|}\sum_{x\in E} \Phi^\star(|g|)\leq \frac{N^d}{|E|}.
\end{equation*}
Apply $(\Phi^\star)^{-1}$ on both sides, we get
\begin{equation}
    \frac{1}{N^d}\sum_{x\in E} |g| \leq \frac{|E|}{N^d}(\Phi^\star)^{-1}
    \left(\frac{N^d}{|E|}\right).
\end{equation}
Therefore, 
\begin{equation*}
\begin{split}
    \|1_E\|_{L^{(\Phi)}(\mu)} &= \sup_g\left\{\frac{1}{N^d}\left|\sum 1_E\cdot g\right|:\frac{1}{N^d}\sum\Phi^\star (|g|)\leq 1\right\}\\
    &\leq \sup_g\left\{\frac{1}{N^d}\sum_{x\in E} |g|:\frac{1}{N^d}\sum\Phi^\star (|g|)\leq 1\right\}\\
    &\leq\frac{|E|}{N^d}(\Phi^\star)^{-1}
    \left(\frac{N^d}{|E|}\right).
\end{split}
\end{equation*}
On the other hand, let $g_0= (\Phi^\star)^{-1}\left( \frac{N^d}{|E|}\right)\cdot 1_E$, then
\begin{equation*}
    \frac{1}{N^d} \sum_{x\in \mathbb{Z}_N^d} \Phi^\star(|g_0|) = \frac{|E|}{N^d}\cdot\frac{N^d}{|E|}=1.
\end{equation*}
So,
\begin{equation*}
    \|1_E\|_{L^{(\Phi)}(\mu)}\geq \frac{1}{N^d}\sum_{x\in \mathbb{Z}_N^d}|1_E\cdot g_0|= \frac{|E|}{N^d}(\Phi^\star)^{-1}\left(\frac{N^d}{|E|}\right).
\end{equation*}
Thus, 
\begin{equation*}
        \|1_E\|_{L^{(\Phi)}(\mu)} =\frac{|E|}{N^d}(\Phi^\star)^{-1}\left(\frac{N^d}{|E|}\right).
    \end{equation*}
\end{proof}

\begin{lemma}{\label{ptophi2}}Given a nice Young function $\Phi$ and $1 \leq p<\infty$ such that $x^p \prec \Phi$, we have
    \begin{equation}
        \|f\|_{L^p(A)} \leq |A|^{\frac{1}{p}}\left(\Phi^{-1}\left(\frac{1}{|A|}\right)\right)\|f\|_{L^{\Phi}(A)}, 
    \end{equation}
for $A\subset \mathbb{Z}_N^d.$
\end{lemma}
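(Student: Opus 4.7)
The plan is to apply Jensen's inequality to reduce the $L^p$ bound to the given Orlicz norm constraint. First, by the homogeneity of both sides in $f$, I would normalize so that $\|f\|_{L^{\Phi}(A)}=1$, which by definition of the gauge norm gives $\sum_{x\in A}\Phi(|f(x)|)\leq 1$. The goal then becomes $\sum_{x\in A}|f(x)|^p\leq |A|\,\Phi^{-1}(1/|A|)^p$, and taking $p$-th roots returns the stated inequality.

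The central step is to write $|f(x)|^p=\Phi^{-1}(\Phi(|f(x)|))^p$ and apply Jensen's inequality with the concave function $y\mapsto \Phi^{-1}(y)^p$ against the uniform probability measure on $A$. Concretely, this gives
\begin{equation*}
\frac{1}{|A|}\sum_{x\in A}|f(x)|^p \leq \left[\Phi^{-1}\left(\frac{1}{|A|}\sum_{x\in A}\Phi(|f(x)|)\right)\right]^p \leq \Phi^{-1}(1/|A|)^p,
\end{equation*}
where the last inequality uses monotonicity of $\Phi^{-1}$ together with the normalization $\sum_{x\in A}\Phi(|f|)\leq 1$. Multiplying by $|A|$ and taking $p$-th roots yields the claim.

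The main obstacle is justifying the concavity of $y\mapsto \Phi^{-1}(y)^p$. This is precisely where the hypothesis $x^p\prec\Phi$ enters: it guarantees, up to replacing $\Phi$ by an equivalent Young function (harmless for Orlicz norms), that $\Phi(y^{1/p})$ is convex, equivalently that $\Phi^{-1}(y)^p$ is concave. An arguably cleaner presentation would introduce $\Psi(y):=\Phi(y^{1/p})$ as a new nice Young function, verify the identity $\|f^p\|_{L^{\Psi}(A)}=\|f\|_{L^{\Phi}(A)}^p$ by the change of variables $k\mapsto k^{1/p}$ in the defining infimum of the gauge norm, and then apply the $p=1$ case (a direct Jensen application) to $|f|^p$ with $\Psi$ in place of $\Phi$. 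Either route is essentially the same computation, with the technical burden entirely on translating the dominance $x^p\prec\Phi$ into a usable convexity/concavity statement.
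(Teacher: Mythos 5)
Your primary argument (direct Jensen) is correct and genuinely different from the paper's route. The paper introduces $\Psi(y)=\Phi(y^{1/p})$, applies the Orlicz H\"older inequality $\|f^p\|_{L^1(A)} \leq \|f^p\|_{L^{\Psi}(A)}\|1\|_{L^{(\Psi^*)}(A)}$, verifies $\|f^p\|_{L^{\Psi}(A)} = \|f\|_{L^{\Phi}(A)}^p$ by the change of variables $k\mapsto k^p$, and then invokes the indicator-function lemma to compute $\|1\|_{L^{(\Psi^*)}(A)} = |A|\Psi^{-1}(1/|A|) = |A|\left(\Phi^{-1}(1/|A|)\right)^p$. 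You instead normalize $\|f\|_{L^\Phi(A)}=1$, observe $\sum_{x\in A}\Phi(|f|)\leq 1$, and apply Jensen directly to the concave map $y\mapsto\Phi^{-1}(y)^p$; this is shorter, avoids the H\"older machinery and the conjugate pair $(\Psi,\Psi^*)$ entirely, and reaches the same exact constant. Both routes rest on the same crux, namely that $x^p\prec\Phi$ makes $\Phi(y^{1/p})$ a Young function (equivalently $\Phi^{-1}(y)^p$ concave); the paper asserts this flatly, whereas you correctly flag it as the place where the hypothesis actually does work and note the possible need to pass to an equivalent Young function. Be a little careful with the phrase ``harmless for Orlicz norms'': replacing $\Phi$ by an equivalent Young function changes the Luxemburg norm and $\Phi^{-1}$ only up to multiplicative constants, so the inequality would then hold only up to a constant, not with the exact constant $|A|^{1/p}\Phi^{-1}(1/|A|)$ stated in the lemma; this caveat applies equally to the paper's proof, which simply assumes $\Phi(x^{1/p})$ is already a nice Young function. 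Your closing alternative (define $\Psi$, use the gauge-norm change of variables, reduce to $p=1$) is essentially the paper's proof rephrased with the indicator-norm step replaced by a direct Jensen on $|f|^p$, so you have in effect reconstructed both arguments.
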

\begin{proof}
    Let $\Psi=\Phi(x^{\frac{1}{p}})$.
    Since $x^p \prec \Phi$, $\Psi$ is a nice Young function.

    Then apply improved H\"older's theorem in Orlicz space, we have \begin{equation}\label{eq: Holder}
        \|f\|^p_{L^{p}(A)} =\|f^p\|_{L^{1}(A)} \leq  \|f^p\|_{L^{\Psi}(A)}\|1\|_{L^{\Psi^*}(A)}.
    \end{equation} 
   
    
    Let $t:= \|f\|_{L^{\Phi}(\mu)}$, then by definition, we have 
    \begin{equation*}
        \sum_{x\in A}\Psi\left( \frac{|f^p|}{t^p}\right) = \sum _{x\in A} {\Phi_1\left(\frac{|x|}{t}\right)} \leq 1 \end{equation*}
        Therefore, we have $\|f^p\|_{L^{\Psi}(\mu)} \leq t^p =\|f\|_{L^{\Phi}(\mu)}^p$.
    
    Substitute it into equation (\ref{eq: Holder}), we have 
    \begin{equation*}
        \|f\|^p_{L^p(A)} \leq \|f\|_{L^{\Phi}(A)}^p\|1\|_{L^{(\Psi^*)}(A)}.
    \end{equation*}
    Take $p^{th}$ root on both sides, we have 
    \begin{equation*}
        \|f\|_{L^p(A)} \leq \|1\|_{L^{(\Psi^*)}(A)}^{\frac{1}{p}}\|f\|_{L^{\Phi}(A)}.
    \end{equation*}

    From lemma \ref{lem: Orlicz norm of an indicator function}, and also by the definition of $\Psi$ we can see 
    \begin{equation*}
        \|1\|_{L^{(\Psi^*)}(A)}=|A|\Psi^{-1}\left(\frac{1}{|A|}\right)=|A|\left(\Phi^{-1}\left(\frac{1}{|A|}\right)\right)^p.
    \end{equation*}

\end{proof}

\begin{corollary}{\label{ptophi}}
Given a nice Young function $\Phi$ and $1 \leq p<\infty$ such that $x^p \prec \Phi$, we have
    \begin{equation}
        \|f\|_{L^p(\mu)} \leq \Phi^{-1}(1)\|f\|_{L^{\Phi}(\mu)}, 
    \end{equation}
    for $A\subset \mathbb{Z}_N^d.$
\end{corollary}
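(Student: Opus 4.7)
The plan is to mimic the proof of Lemma \ref{ptophi2} but work with the normalized measure $\mu$ throughout (so the trailing ``for $A\subset \mathbb{Z}_N^d$'' should be read as $A=\mathbb{Z}_N^d$; the factor $|A|^{1/p}$ of Lemma \ref{ptophi2} gets absorbed into the normalization). First I would set $\Psi(x)=\Phi(x^{1/p})$; the hypothesis $x^p\prec\Phi$ guarantees that $\Psi$ is again a nice Young function, exactly as in Lemma \ref{ptophi2}.

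Next I would apply the improved H\"older inequality (Theorem \ref{holder}) with the measure $\mu$ to the pair $(f^p,1)$, obtaining
\begin{equation*}
\|f\|_{L^p(\mu)}^p=\|f^p\|_{L^1(\mu)}\leq \|f^p\|_{L^{\Psi}(\mu)}\,\|1\|_{L^{(\Psi^*)}(\mu)}.
\end{equation*}
For the first factor, exactly as in Lemma \ref{ptophi2}, set $k=\|f\|_{L^\Phi(\mu)}$; then
\begin{equation*}
\frac{1}{N^d}\sum_{x\in\mathbb{Z}_N^d}\Psi\!\left(\frac{|f(x)|^p}{k^p}\right)=\frac{1}{N^d}\sum_{x\in\mathbb{Z}_N^d}\Phi\!\left(\frac{|f(x)|}{k}\right)\leq 1,
\end{equation*}
which forces $\|f^p\|_{L^\Psi(\mu)}\leq k^p=\|f\|_{L^\Phi(\mu)}^p$.

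For the second factor I would apply Lemma \ref{lem: Orlicz norm of an indicator function} with $E=\mathbb{Z}_N^d$ and with the Young function $\Psi^*$ in place of $\Phi$. Since $(\Psi^*)^*=\Psi$ and $|E|/N^d=1$, this gives
\begin{equation*}
\|1\|_{L^{(\Psi^*)}(\mu)}=\Psi^{-1}(1)=\bigl(\Phi^{-1}(1)\bigr)^p,
\end{equation*}
where the last equality uses the definition $\Psi(x)=\Phi(x^{1/p})$. Combining the two factors and taking $p$-th roots then yields the claim.

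The only genuinely non-routine ingredient is the assertion that $\Psi(x)=\Phi(x^{1/p})$ is a nice Young function, but this is inherited from the same step in the proof of Lemma \ref{ptophi2}. The only bookkeeping ``obstacle'' I expect is making sure the normalization is handled consistently so that the $|A|^{1/p}\Phi^{-1}(1/|A|)$ prefactor of Lemma \ref{ptophi2} collapses to just $\Phi^{-1}(1)$ when one passes from counting measure to $\mu$; this is precisely what happens because Lemma \ref{lem: Orlicz norm of an indicator function} applied with $E=\mathbb{Z}_N^d$ has no $|A|$ prefactor left.
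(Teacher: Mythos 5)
Your proof is correct and matches the intended derivation: the paper presents this statement as a corollary of Lemma \ref{ptophi2} without a separate proof, and your argument is precisely the lemma's proof repeated under the normalized measure $\mu$, where $\|1\|_{L^{(\Psi^*)}(\mu)}=\Psi^{-1}(1)=(\Phi^{-1}(1))^p$ carries no $|A|$ prefactor. You are also right that the trailing ``for $A\subset\mathbb{Z}_N^d$'' is vestigial. One remark worth recording: the corollary does \emph{not} follow from Lemma \ref{ptophi2} with $A=\mathbb{Z}_N^d$ by a naive change of normalization via Lemma \ref{normalize}, which only yields $\|f\|_{L^p(\mu)}\leq N^d\Phi^{-1}(N^{-d})\|f\|_{L^\Phi(\mu)}$ — a weaker bound since $N^d\Phi^{-1}(N^{-d})\geq\Phi^{-1}(1)$ by concavity of $\Phi^{-1}$ — so re-running the argument directly under $\mu$, as you do, is genuinely required.
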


\begin{lemma}[Comparing Orlicz norm and $L^1-$norm]\label{lem: Comparing Orlicz norm and $L^1-$norm}
    Given a nice Young function $\Phi$ and a function $f: \mathbb{Z}_N^d \to \mathbb{C}$, where its Fourier transform $\hat{f}:\mathbb{Z}_N^d\to \mathbb{C}$ is supported in $S\subset \mathbb{Z}_N^d$. We have the following inequality:
    \begin{equation}
        \|f\|_{L^\Phi(\mathbb{Z}_N^d)}\leq \frac{|S|}{N^d\Phi^{-1}\left(\frac{|S|}{N^d}\right)}\|f\|_{L^1(\mathbb{Z}_N^d)}.
    \end{equation}
\end{lemma}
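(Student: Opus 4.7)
The plan is to reduce the problem first to an $L^\infty$ estimate coming from the Fourier support assumption, and then to transfer that pointwise bound into an $L^\Phi$ bound by exploiting the convexity of $\Phi$ (and the fact that $\Phi(0)=0$). Finally, I would clean up by a monotonicity argument.

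\textbf{Step 1: A pointwise $L^\infty$ bound.} Since $\hat f$ is supported in $S$, the Fourier inversion formula gives $f(x)=\sum_{m\in S}\chi(x\cdot m)\hat f(m)$. Combined with the trivial bound $|\hat f(m)|\leq \frac{1}{N^d}\|f\|_{L^1(\mathbb Z_N^d)}$ from the definition of $\hat f$, this yields
$$\|f\|_{L^\infty(\mathbb Z_N^d)}\leq \frac{|S|}{N^d}\|f\|_{L^1(\mathbb Z_N^d)}.$$

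\textbf{Step 2: From $L^\infty$ to $L^\Phi$ via convexity.} For any $k>0$, write $|f(x)|/k = \bigl(|f(x)|/\|f\|_{L^\infty}\bigr)\cdot \|f\|_{L^\infty}/k$. Since $\Phi$ is convex and $\Phi(0)=0$, it satisfies $\Phi(t y)\leq t\,\Phi(y)$ for $t\in[0,1]$, hence
$$\Phi\!\left(\frac{|f(x)|}{k}\right)\leq \frac{|f(x)|}{\|f\|_{L^\infty}}\,\Phi\!\left(\frac{\|f\|_{L^\infty}}{k}\right).$$
Summing over $x\in\mathbb Z_N^d$ gives
$$\sum_{x}\Phi(|f|/k)\leq \frac{\|f\|_{L^1}}{\|f\|_{L^\infty}}\,\Phi\!\left(\frac{\|f\|_{L^\infty}}{k}\right).$$
Demanding that the right-hand side be $\leq 1$ and solving for the smallest admissible $k$ via the invertibility of $\Phi$ yields
$$\|f\|_{L^\Phi(\mathbb Z_N^d)}\leq \frac{\|f\|_{L^\infty}}{\Phi^{-1}\!\bigl(\|f\|_{L^\infty}/\|f\|_{L^1}\bigr)}.$$

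\textbf{Step 3: Monotonicity to reach the stated form.} Setting $c:=\|f\|_{L^\infty}/\|f\|_{L^1}$, Step 2 reads $\|f\|_{L^\Phi}\leq \frac{c}{\Phi^{-1}(c)}\|f\|_{L^1}$. I claim the map $c\mapsto c/\Phi^{-1}(c)$ is nondecreasing. Indeed, convexity of $\Phi$ together with $\Phi(0)=0$ makes $u\mapsto \Phi(u)/u$ nondecreasing; substituting $u=\Phi^{-1}(c)$ gives $c/\Phi^{-1}(c)=\Phi(u)/u$, and since $\Phi^{-1}$ is increasing, the composition is increasing in $c$. Because $c\leq |S|/N^d$ by Step 1, plugging in the larger value $|S|/N^d$ delivers
$$\|f\|_{L^\Phi(\mathbb Z_N^d)}\leq \frac{|S|/N^d}{\Phi^{-1}(|S|/N^d)}\|f\|_{L^1(\mathbb Z_N^d)},$$
which is exactly the stated inequality.

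I do not anticipate any serious obstacle. The only subtle point is Step 3: one must be careful not to weaken the bound in the wrong direction when replacing $\|f\|_{L^\infty}/\|f\|_{L^1}$ by the upper estimate $|S|/N^d$, and the monotonicity of $c/\Phi^{-1}(c)$ (equivalently, of $\Phi(u)/u$) is precisely what justifies this replacement.
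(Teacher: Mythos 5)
Your proposal is correct and follows essentially the same route as the paper: both derive the pointwise bound $\|f\|_\infty\leq\frac{|S|}{N^d}\|f\|_{L^1}$ from Fourier inversion, both establish the intermediate inequality $\|f\|_{L^\Phi}\leq\|f\|_\infty/\Phi^{-1}\bigl(\|f\|_\infty/\|f\|_{L^1}\bigr)$, and both close by the monotonicity of $c\mapsto c/\Phi^{-1}(c)$. Your Step 2 is arguably cleaner than the paper's: you invoke $\Phi(ty)\leq t\Phi(y)$ directly from convexity and $\Phi(0)=0$, whereas the paper reaches the same estimate by H\"older against $L^\infty$ and an ad hoc claim that $\Phi(x/k)/x$ is increasing (which really rests on the same convexity fact); this is a cosmetic rather than structural difference.
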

\begin{proof}
First, by the definition of Orlicz norm, we have\begin{align*}
    \|f\|_{L^{\Phi}(\mathbb{Z}_N^d)}
    &=\inf\left\{k>0: \sum_{x\in \mathbb{Z}_N^d}\Phi \left(\frac{|f|}{k}\right)\leq 1 \right\},\\    
    \sum_{x\in \mathbb{Z}_N^d}\Phi \left(\frac{|f|}{k}\right)
    &=\sum_{x\in \mathbb{Z}_N^d}\frac{\Phi\left(\frac{|f|}{k}\right)}{f}\cdot f    \leq \left\|\frac{\Phi\left(\frac{|f|}{k}\right)}{f}\right\|_\infty \|f\|_{L^1(\mathbb{Z}_N^d)}, 
\end{align*}
where the last inequality is obtained by H\"older's inequality. 
Since $x\prec\Phi$, $\frac{\Phi(\frac{x}{k})}{x}$ is still an increasing function. Therefore, 
$\left\|\frac{\Phi\left(\frac{|f|}{k}\right)}{f}\right\|_\infty$ is achieved by plugging in $\|f\|_\infty$,
\begin{equation*}
\left\|\frac{\Phi\left(\frac{|f|}{k}\right)}{f}\right\|_\infty
=\frac{\Phi\left(\frac{\|f\|_\infty}{k}\right)}{\|f\|_\infty}.
\end{equation*}
By definition,
\begin{equation*}
    \|f\|_{L^{\Phi}(\mathbb{Z}_N^d)}
    \leq \inf\left\{k>0: \frac{\Phi\left(\frac{\|f\|_\infty}{k}\right)}{\|f\|_\infty}\|f\|_{L^1(\mathbb{Z}_N^d)}\leq 1 \right\}
\end{equation*}
So consider,
\begin{align*}
\frac{\Phi\left(\frac{\|f\|_\infty}{k}\right)}{\|f\|_\infty}\|f\|_{L^1(\mathbb{Z}_N^d)} &\leq 1\\
    \Phi\left(\frac{\|f\|_\infty}{k}\right) &\leq \frac{\|f\|_\infty}{\|f\|_{L^1(\mathbb{Z}_N^d)}} \\
    \frac{\|f\|_\infty}{k} &\leq \Phi^{-1}\left(\frac{\|f\|_\infty}{\|f\|_{L^1(\mathbb{Z}_N^d)}}\right)\\
    k &\geq \frac{\|f\|_\infty}{\Phi^{-1}\left(\frac{\|f\|_\infty}{\|f\|_{L^1(\mathbb{Z}_N^d)}}\right)},
\end{align*}
then taking the infimum over $k$, we have
\begin{equation}\label{ineq: L_phi L_infty}
    \|f\|_{L^{\Phi}(\mathbb{Z}_N^d)}
    \leq\frac{\|f\|_\infty}{\Phi^{-1}\left(\frac{\|f\|_\infty}{\|f\|_{L^1(\mathbb{Z}_N^d)}}\right)}.
\end{equation}
    By the Fourier inversion formula and the definition of the supremum norm, we have the following inequalities,
    \begin{align*}
    |\hat{f}(m)|&=N^{-d}\left|\sum_{x\in \mathbb{Z}_N^d}\chi(-x\cdot m)f(x)\right|\leq N^{-d}\sum_{x\in \mathbb{Z}_N^d}|f(x)|=N^{-d}\|f\|_{L^1(\mathbb{Z}_N^d)},\\
    |f(x)|&=\left|\sum_{m\in S}\chi(x\cdot m)\hat{f}(m)\right|\leq \sum_{m\in S}|\hat{f}(m)| \leq |S|\|\hat{f}\|_\infty.
\end{align*}
Combining these two inequalities, we get: 
\begin{equation}\label{ineq: L_infty L_1}
    \|f\|_\infty \leq |S|\|\hat{f}\|_\infty \leq |S|N^{-d}\|f\|_{L^1(\mathbb{Z}_N^d)}.
\end{equation}
Since $\Phi$ is an increasing convex function, we have that $\Phi^{-1} 
 \prec x$ is an increasing concave function.  
Therefore $ \frac{x}{\Phi^{-1}
(x)} \succ 1$ is an increasing function. Now substitute \ref{ineq: L_infty L_1} into \ref{ineq: L_phi L_infty}, we have 
\begin{equation*}
    \|f\|_{L^{\Phi}(\mathbb{Z}_N^d)}
    \leq\frac{\|f\|_\infty}{\Phi^{-1}\left(\frac{\|f\|_\infty}{\|f\|_{L^1(\mathbb{Z}_N^d)}}\right)}
    \leq\frac{\frac{|S|}{N^{d}}\|f\|_{L^1(\mathbb{Z}_N^d)}}{\Phi^{-1}\left(\frac{\frac{|S|}{N^{d}}\|f\|_{L^1(\mathbb{Z}_N^d)}}{\|f\|_{L^1(\mathbb{Z}_N^d)}}\right)}
    =\frac{|S|\|f\|_{L^1(\mathbb{Z}_N^d)}}{N^{d}\Phi^{-1}\left(\frac{|S|}{N^{d}}\right)}.
\end{equation*}

\end{proof}
\begin{lemma}[Comparing $L^1-$norm and Orlicz norm after normalization]\label{lem: Comparing $L^1-$norm and Orlicz norm after normalization}
Given a nice Young function $\Phi$, and a function $f:\mathbb{Z}_N^d\to \mathbb{C}$ with support $E\subset \mathbb{Z}_N^d$. We have the following inequality: 
\begin{equation}
\|f\|_{L^1(\mathbb{Z}_N^d)}\leq |E|\Phi^{-1}(1) \|f\|_{L^\Phi(\mu_E)}.
\end{equation}    
\end{lemma}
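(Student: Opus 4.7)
The plan is to prove this lemma via a direct application of Jensen's inequality to the definition of the Luxemburg (gauge) norm, exploiting the fact that $f$ is supported on $E$ so that the full $L^1$ sum collapses to a sum over $E$.

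First I would observe that, because $\mathrm{supp}(f)\subset E$, we have $\|f\|_{L^1(\mathbb{Z}_N^d)}=\sum_{x\in \mathbb{Z}_N^d}|f(x)|=\sum_{x\in E}|f(x)|$, which rewrites as $|E|$ times the normalized average over $E$. So the task reduces to showing $\frac{1}{|E|}\sum_{x\in E}|f(x)|\le \Phi^{-1}(1)\|f\|_{L^\Phi(\mu_E)}$.

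Next, for any $k>\|f\|_{L^\Phi(\mu_E)}$, the definition of the gauge norm gives $\frac{1}{|E|}\sum_{x\in E}\Phi\!\left(\tfrac{|f(x)|}{k}\right)\le 1$. Because $\Phi$ is convex, Jensen's inequality applied to the uniform probability measure on $E$ yields
\begin{equation*}
\Phi\!\left(\frac{1}{|E|}\sum_{x\in E}\frac{|f(x)|}{k}\right)\le \frac{1}{|E|}\sum_{x\in E}\Phi\!\left(\frac{|f(x)|}{k}\right)\le 1.
\end{equation*}
Applying the (strictly increasing) inverse $\Phi^{-1}$ to both sides and multiplying by $|E|k$ gives $\sum_{x\in E}|f(x)|\le |E|k\,\Phi^{-1}(1)$. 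Taking the infimum over admissible $k$ then produces $\sum_{x\in E}|f(x)|\le |E|\Phi^{-1}(1)\|f\|_{L^\Phi(\mu_E)}$, which combined with the first step is exactly the claimed inequality.

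I do not expect a genuine obstacle here; the only thing to watch is the bookkeeping between the unnormalized $L^1$ norm on $\mathbb{Z}_N^d$ and the normalized measure $\mu_E$ used inside $\|f\|_{L^\Phi(\mu_E)}$, which is precisely the factor of $|E|$ that appears in the statement. (Alternatively, one could deduce the result by applying Theorem \ref{holder} with the function $g\equiv 1$ on $E$ and the computation $\|1\|_{L^{(\Phi^*)}(\mu_E)}=\Phi^{-1}(1)$, carried out exactly as in Lemma \ref{lem: Orlicz norm of an indicator function}; but the Jensen route avoids invoking complementary Young functions and is shorter.)
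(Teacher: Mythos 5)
Your proof is correct and follows essentially the same route as the paper's: both apply Jensen's inequality for the convex function $\Phi$ under the uniform measure on $E$ to the defining constraint of the Luxemburg norm $\|f\|_{L^\Phi(\mu_E)}$, then unwind to compare with $\frac{1}{|E|}\sum_{x\in E}|f(x)|$. The paper phrases it as a lower bound on the infimum defining the norm, while you phrase it as an upper bound on the $L^1$ sum for each admissible $k$, but these are the same argument.
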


\begin{proof}
By Jensen's inequality, for $k>0$,
\begin{equation*}
    \frac{1}{|E|}\sum_{x\in E}\Phi \left(\frac{|f(x)|}{k}\right)\geq \Phi\left(\frac{\sum_{x\in E}|f(x)|}{k|E|}\right). 
\end{equation*}
Then, by definition,
\begin{align*}
    \|f\|_{L^\Phi(\mu_E)} &= 
    \inf\left\{k>0: \frac{1}{|E|}\sum_{x\in E}\Phi \left(\frac{|f(x)|}{k}\right)\leq 1 \right\}\\
    &\geq \inf\left\{k>0: \Phi\left(\frac{\sum_{x\in E}|f(x)|}{k|E|}\right)\leq 1 \right\}\\
    &= \inf\left\{k>0: \frac{\sum_{x\in E}|f(x)|}{k|E|}\leq \Phi^{-1}(1) \right\}\\
    &= \inf\left\{k>0: k\geq \frac{\|f\|_{L^1(E)}}{|E|\Phi^{-1}(1)} \right\}.
\end{align*}
Thus,
\begin{equation*}
\|f\|_{L^1(\mathbb{Z}_N^d)}=\|f\|_{L^1(E)}
\leq |E|\Phi^{-1}(1) \|f\|_{L^\Phi(\mu_E)}.
\end{equation*}
\end{proof}

\begin{lemma}{\label{normalize}}
    If $\Phi$ is a Young function, then for $A\subseteq\mathbb{Z}_N^d$,\begin{align*}
        \lVert f\rVert_{L^\Phi(A)}&\leq|A|\lVert f\rVert_{L^\Phi(\mu_A)}.
    \end{align*}
\end{lemma}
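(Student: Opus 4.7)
The plan is to reduce the unnormalized gauge norm $\|f\|_{L^\Phi(A)}$ to the normalized version $\|f\|_{L^\Phi(\mu_A)}$ using only the convexity of $\Phi$ together with $\Phi(0) = 0$. These two properties imply the sublinear estimate $\Phi(\lambda t) \leq \lambda \Phi(t)$ for every $\lambda \in [0,1]$, obtained by writing $\lambda t = \lambda t + (1-\lambda) \cdot 0$ and applying convexity.

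First I would set $k := \|f\|_{L^\Phi(\mu_A)}$. For arbitrary $\varepsilon > 0$, put $k' := k + \varepsilon$; by definition of the normalized norm,
\begin{equation*}
\sum_{x \in A} \Phi\!\left(\tfrac{|f(x)|}{k'}\right) \leq |A|.
\end{equation*}
The key step is to test the unnormalized gauge norm at level $|A|k'$. Since $A$ is nonempty, $1/|A| \in (0,1]$, so the sublinear estimate with $\lambda = 1/|A|$ gives
\begin{equation*}
\Phi\!\left(\tfrac{|f(x)|}{|A|k'}\right) = \Phi\!\left(\tfrac{1}{|A|}\cdot \tfrac{|f(x)|}{k'}\right) \leq \tfrac{1}{|A|}\Phi\!\left(\tfrac{|f(x)|}{k'}\right).
\end{equation*}
Summing over $x \in A$ and using the previous inequality bounds the total by $1$, so $|A|k'$ lies in the infimum set defining $\|f\|_{L^\Phi(A)}$. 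Consequently $\|f\|_{L^\Phi(A)} \leq |A|(k+\varepsilon)$, and letting $\varepsilon \to 0^+$ finishes the argument.

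I do not anticipate any real obstacle here: the entire content is a one-line application of $\Phi(\lambda t) \leq \lambda \Phi(t)$ for $\lambda \leq 1$, and the only bookkeeping is the standard $\varepsilon$-argument for passing between the infima. One could alternatively phrase this directly in terms of infima without introducing $\varepsilon$, but the $\varepsilon$-version is slightly cleaner.
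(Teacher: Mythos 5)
Your proof is correct and takes essentially the same route as the paper: both arguments hinge on the convexity/sublinearity estimate $\Phi(t/|A|)\leq\frac{1}{|A|}\Phi(t)$ and a limiting argument toward $\lVert f\rVert_{L^\Phi(\mu_A)}$; the only difference is cosmetic (the paper takes an arbitrary $u>\lVert f\rVert_{L^\Phi(\mu_A)}$, you take $k+\varepsilon$).
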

\proof Let $u>\lVert f\rVert_{L^\Phi(\mu_A)}$. By convexity and the definition of the Luxemburg norm,\begin{align*}
    \sum_{x\in A}\Phi\left(\frac{|f(x)|}{|A|u}\right)\leq\frac{1}{|A|}\sum_{x\in A}\Phi\left(\frac{|f(x)|}{u}\right)\leq1.
\end{align*}
Letting $u\to\lVert f\rVert_{L^\Phi(\mu_A)}$, it follows that $\lVert f\rVert_{L^\Phi(A)}\leq|A|u=|A|\lVert f\rVert_{L^\Phi(\mu_A)}$, as desired.\endproof

\begin{corollary}[Comparing Orlicz norm and norm after normalization]\label{lemma: Comparing Orlicz norm and norm after normalization}
    Given a nice Young function $\Phi$, and a function $f:\mathbb{Z}_N^d\to \mathbb{C}$ with support $E\subset \mathbb{Z}_N^d$, and $\hat{f}$ with support $S\subset \mathbb{Z}_N^d$. We have the following inequality: 
\begin{equation}
    \|f\|_{L^\Phi(E)}\leq \frac{|E||S|\Phi^{-1}(1)}{N^d\Phi^{-1}\left(\frac{|S|}{N^d}\right)} \|f\|_{L^\Phi(\mu_E)}.
\end{equation}
\end{corollary}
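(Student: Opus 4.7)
The plan is to simply chain the two preceding lemmas. The coefficient in the statement factors as
\[
\frac{|E||S|\Phi^{-1}(1)}{N^d\Phi^{-1}(|S|/N^d)} = \underbrace{\frac{|S|}{N^d\Phi^{-1}(|S|/N^d)}}_{\text{from Lemma \ref{lem: Comparing Orlicz norm and $L^1-$norm}}} \cdot \underbrace{|E|\Phi^{-1}(1)}_{\text{from Lemma \ref{lem: Comparing $L^1-$norm and Orlicz norm after normalization}}},
\]
which strongly suggests that the inequality should follow from composing those two bounds with the $L^1(\mathbb{Z}_N^d)$ norm as the intermediate quantity.

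First I would reduce $\|f\|_{L^\Phi(E)}$ to $\|f\|_{L^\Phi(\mathbb{Z}_N^d)}$. Since $f$ is supported on $E$ and any nice Young function satisfies $\Phi(0)=0$, for every $k>0$ we have
\[
\sum_{x\in\mathbb{Z}_N^d}\Phi\!\left(\frac{|f(x)|}{k}\right) = \sum_{x\in E}\Phi\!\left(\frac{|f(x)|}{k}\right),
\]
so the infima defining the two Luxemburg norms coincide, giving $\|f\|_{L^\Phi(E)}=\|f\|_{L^\Phi(\mathbb{Z}_N^d)}$.

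Next, because $\hat{f}$ is supported in $S$, Lemma \ref{lem: Comparing Orlicz norm and $L^1-$norm} applies and yields
\[
\|f\|_{L^\Phi(\mathbb{Z}_N^d)} \leq \frac{|S|}{N^d\Phi^{-1}\!\left(\frac{|S|}{N^d}\right)}\|f\|_{L^1(\mathbb{Z}_N^d)}.
\]
Then, since $f$ is supported in $E$, Lemma \ref{lem: Comparing $L^1-$norm and Orlicz norm after normalization} gives
\[
\|f\|_{L^1(\mathbb{Z}_N^d)} \leq |E|\Phi^{-1}(1)\,\|f\|_{L^\Phi(\mu_E)}.
\]
Multiplying these two bounds and using the reduction above produces precisely the claimed inequality.

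There is no real obstacle here: this is a straightforward corollary, essentially a concatenation of the two immediately preceding lemmas. The only place where care is needed is the initial identification $\|f\|_{L^\Phi(E)}=\|f\|_{L^\Phi(\mathbb{Z}_N^d)}$, which relies on $\Phi(0)=0$ and the fact that $f$ vanishes off $E$; everything else is algebra.
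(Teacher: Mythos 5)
Your proof is correct and follows exactly the same route as the paper's: identify $\|f\|_{L^\Phi(E)}=\|f\|_{L^\Phi(\mathbb{Z}_N^d)}$ because $f$ vanishes off $E$, then chain Lemma \ref{lem: Comparing Orlicz norm and $L^1-$norm} with Lemma \ref{lem: Comparing $L^1-$norm and Orlicz norm after normalization} through the intermediate $L^1$ norm. The only difference is that you explicitly justify the first equality (via $\Phi(0)=0$), which the paper takes as given.
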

\begin{proof}
    By Lemma \ref{lem: Comparing Orlicz norm and $L^1-$norm} and Lemma \ref{lem: Comparing $L^1-$norm and Orlicz norm after normalization},
    we have
\begin{equation*}
    \|f\|_{L^\Phi(E)} =  \|f\|_{L^\Phi(\mathbb{Z}_N^d)} \leq \frac{|S|}{N^d\Phi^{-1}\left(\frac{|S|}{N^d}\right)}\|f\|_{L^1(\mathbb{Z}_N^d)} \leq \frac{|E||S|\Phi^{-1}(1)}{N^d\Phi^{-1}\left(\frac{|S|}{N^d}\right)} \|f\|_{L^\Phi(\mu_E)}. 
\end{equation*}
\end{proof}
\begin{lemma}[Generalized Young's Inequality]{\label{young}}
    If $\Phi,\Psi,\Theta$ are Young functions satisfying\begin{align*}
        \Phi^{-1}(x)\geq\Psi^{-1}(x)\Theta^{-1}(x)
    \end{align*}for all $x\in[0,\infty)$, then for all $x,y\in[0,\infty)$,\begin{align*}
        \Phi(xy)\leq\Psi(x)+\Theta(y).
    \end{align*}
\end{lemma}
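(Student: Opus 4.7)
The plan is to proceed by a direct substitution argument, reducing the inequality to the hypothesis via the monotonicity of inverse Young functions. First I would set $a := \Psi(x)$ and $b := \Theta(y)$, so that (using the appropriate generalized inverse for Young functions) we have $x = \Psi^{-1}(a)$ and $y = \Theta^{-1}(b)$. The product then factors as $xy = \Psi^{-1}(a)\,\Theta^{-1}(b)$.

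The key estimate is then to upgrade each inverse evaluation from the individual value $a$ or $b$ to the sum $a+b$. Since $\Psi^{-1}$ and $\Theta^{-1}$ are nondecreasing (as $\Psi, \Theta$ are Young functions and hence nondecreasing), we have
\begin{equation*}
  xy \;=\; \Psi^{-1}(a)\,\Theta^{-1}(b) \;\leq\; \Psi^{-1}(a+b)\,\Theta^{-1}(a+b).
\end{equation*}
Now the hypothesis $\Phi^{-1}(t) \geq \Psi^{-1}(t)\Theta^{-1}(t)$, applied at $t = a+b$, gives $\Psi^{-1}(a+b)\Theta^{-1}(a+b) \leq \Phi^{-1}(a+b)$. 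Combining the two displays yields $xy \leq \Phi^{-1}(\Psi(x) + \Theta(y))$.

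Finally, I would apply the nondecreasing function $\Phi$ to both sides. Using that $\Phi(\Phi^{-1}(t)) \leq t$ for the standard right-continuous Young inverse, this gives
\begin{equation*}
  \Phi(xy) \;\leq\; \Phi\bigl(\Phi^{-1}(\Psi(x) + \Theta(y))\bigr) \;\leq\; \Psi(x) + \Theta(y),
\end{equation*}
which is the desired conclusion.

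The main (really the only) obstacle is the careful handling of generalized inverses, since Young functions need not be strictly increasing and may take the value $+\infty$. One has to fix a convention (say $\Phi^{-1}(t) = \inf\{s\geq 0 : \Phi(s) > t\}$) and verify the two facts used above: that $\Psi^{-1}(\Psi(x)) \geq x$ (so the substitution step is valid) and that $\Phi(\Phi^{-1}(t)) \leq t$ (so the final application of $\Phi$ is safe). The edge cases $x=0$ or $y=0$ reduce immediately to $\Phi(0)=0 \leq \Psi(x) + \Theta(y)$, and the case where $\Psi(x) = \infty$ or $\Theta(y) = \infty$ is trivial since the right-hand side is $+\infty$.
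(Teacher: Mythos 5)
Your proof is correct and is a small but genuine variant of the paper's. The paper splits into cases according to whether $\Psi(x)\leq\Theta(y)$ or $\Theta(y)\leq\Psi(x)$; in the first case it replaces both arguments of $\Psi^{-1}$ and $\Theta^{-1}$ by $\Theta(y)$, applies the hypothesis at $t=\Theta(y)$, and concludes $\Phi(xy)\leq\Theta(y)$, then by symmetry arrives at the slightly sharper bound $\Phi(xy)\leq\max\{\Psi(x),\Theta(y)\}$ before relaxing to the sum. You instead replace both arguments by the sum $a+b=\Psi(x)+\Theta(y)$ in one step, avoiding the case split entirely. Both proofs hinge on exactly the same two generalized-inverse facts, $u\leq\Psi^{-1}(\Psi(u))$ and $\Phi(\Phi^{-1}(t))\leq t$, which you correctly flag as the only delicate point. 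The paper's route has the minor virtue of producing the strictly stronger $\max$ estimate as a by-product, while yours is a touch more streamlined since it needs no case distinction; for the lemma as stated, the two are interchangeable. One cosmetic nit: you write $xy=\Psi^{-1}(a)\Theta^{-1}(b)$ after setting $a=\Psi(x)$, $b=\Theta(y)$, which is in general only an inequality $xy\leq\Psi^{-1}(a)\Theta^{-1}(b)$ (a Young function need not be injective), but since you acknowledge this and the inequality points the right way, the argument is unaffected.
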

\proof It follows from the definition of $\Psi^{-1}$ that $\Psi(\Psi^{-1}(x))\leq x\leq\Psi^{-1}(\Psi(x))$. Take some $x,y\in[0,\infty)$, and suppose that $\Psi(x)\leq\Theta(y)$. Then\begin{align*}
    \Phi(xy)&\leq\Phi(\Psi^{-1}(\Psi(x))\Theta^{-1}(\Theta(y))) \\
    &\leq\Phi(\Psi^{-1}(\Theta(y))\Theta^{-1}(\Theta(y))) \\
    &\leq\Phi(\Phi^{-1}(\Theta(y))) \\
    &\leq\Theta(y).
\end{align*}
Similarly, if $\Psi(x)\geq\Theta(y)$, $\Phi(xy)\leq\Psi(x)$. As such,\begin{align*}
    \Phi(xy)\leq\max\{\Psi(x),\Theta(y)\}\leq\Psi(x)+\Theta(y),
\end{align*}as desired.\endproof
\begin{corollary}{\label{sum}}
    If $\Phi,\Psi,\Theta$ are Young functions satisfying\begin{align*}
        \Phi^{-1}(x)\geq\Psi^{-1}(x)\Theta^{-1}(x)
    \end{align*}for all $x\in[0,\infty)$, then for $f\in L^\Psi$, $g\in L^\Theta$, and $A\subseteq\mathbb{Z}_N^d$,\begin{align*}
        \sum_{x\in A}\Phi(|f(x)g(x)|)\leq\sum_{x\in A}\Psi(|f(x)|)+\sum_{x\in A}\Theta(|g(x)|).
    \end{align*}
\end{corollary}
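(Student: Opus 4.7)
The plan is to apply Lemma \ref{young} pointwise and then sum. Specifically, for each $x \in A$, plugging in $|f(x)|$ and $|g(x)|$ into the generalized Young inequality of Lemma \ref{young} gives the pointwise bound
\begin{equation*}
\Phi(|f(x)g(x)|) = \Phi(|f(x)|\cdot |g(x)|) \leq \Psi(|f(x)|) + \Theta(|g(x)|).
\end{equation*}
Summing over all $x \in A$ then yields the desired inequality. There is essentially no obstacle: the corollary is a one-line consequence of Lemma \ref{young} applied termwise, combined with the linearity and monotonicity of summation. The hypotheses on $f \in L^\Psi$ and $g \in L^\Theta$ are only needed implicitly to guarantee that the right-hand side makes sense and is finite, but in the discrete setting over $\mathbb{Z}_N^d$ all sums are finite anyway, so the statement holds unconditionally on the integrability of $f$ and $g$.
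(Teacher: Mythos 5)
Your proof is correct and matches the paper's argument exactly: both apply Lemma \ref{young} pointwise to $|f(x)|$ and $|g(x)|$ and then sum over $x \in A$. The additional remark about the integrability hypotheses being automatic in the finite setting is accurate but not part of the paper's (one-line) proof.
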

\proof We sum over the inequality\begin{align*}
    \Phi(|f(x)g(x)|)\leq\Psi(|f(x)|)+\Theta(|g(x)|),
\end{align*}which follows immediately from Lemma \ref{young}.\endproof
\begin{theorem}[Generalized Hölder's Inequality \cite{ONiel}]{\label{genholder}}
    If $\Phi,\Psi,\Theta$ are Young functions satisfying\begin{align*}
        \Phi^{-1}(x)\geq\Psi^{-1}(x)\Theta^{-1}(x)
    \end{align*}for all $x\in[0,\infty)$, then for $f\in L^\Psi$, $g\in L^\Theta$, and $A\subseteq\mathbb{Z}_N^d$, $fg\in L^\Phi$ and\begin{align*}
        \lVert fg\rVert_{L^\Phi(A)}\leq2\lVert f\rVert_{L^\Psi(A)}\lVert g\rVert_{L^\Theta(A)}.
    \end{align*}
\end{theorem}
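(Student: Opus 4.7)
The plan is to normalize $f$ and $g$ by their respective Luxemburg norms and exploit the additive pointwise-sum estimate from Corollary \ref{sum}, then absorb the resulting factor of $2$ using the convexity of $\Phi$.

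More precisely, I would pick $u > \lVert f\rVert_{L^\Psi(A)}$ and $v > \lVert g\rVert_{L^\Theta(A)}$. By the definition of the Luxemburg norm, both
\begin{align*}
\sum_{x\in A}\Psi\left(\frac{|f(x)|}{u}\right)\leq 1 \quad\text{and}\quad \sum_{x\in A}\Theta\left(\frac{|g(x)|}{v}\right)\leq 1
\end{align*}
hold. Applying Corollary \ref{sum} with $f$ replaced by $f/u$ and $g$ replaced by $g/v$, I get
\begin{align*}
\sum_{x\in A}\Phi\left(\frac{|f(x)g(x)|}{uv}\right)\leq \sum_{x\in A}\Psi\left(\frac{|f(x)|}{u}\right)+\sum_{x\in A}\Theta\left(\frac{|g(x)|}{v}\right)\leq 2.
\end{align*}

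The next step is to move the factor $2$ inside the argument of $\Phi$. Since $\Phi$ is convex with $\Phi(0)=0$, for any $t\geq 0$ one has $\Phi(t/2)\leq \tfrac{1}{2}\Phi(t)$, so
\begin{align*}
\sum_{x\in A}\Phi\left(\frac{|f(x)g(x)|}{2uv}\right)\leq \frac{1}{2}\sum_{x\in A}\Phi\left(\frac{|f(x)g(x)|}{uv}\right)\leq 1.
\end{align*}
By the definition of the Luxemburg norm this gives $\lVert fg\rVert_{L^\Phi(A)}\leq 2uv$. Letting $u\to \lVert f\rVert_{L^\Psi(A)}$ and $v\to \lVert g\rVert_{L^\Theta(A)}$ yields the desired inequality. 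The membership $fg\in L^\Phi$ follows from the finiteness of this bound.

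The only subtle point is really just the convexity trick to convert the $2$ on the right side of Corollary \ref{sum} into the factor of $2$ outside the norm on the right side of the theorem. There is no genuine obstacle here because the heavy lifting has already been done by Lemma \ref{young} and Corollary \ref{sum}; this theorem is essentially a norm-level repackaging of the pointwise Young-type inequality, analogous to how Theorem \ref{Holder's theorem} is derived from the classical Young inequality.
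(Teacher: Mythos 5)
Your proof is correct and follows essentially the same route as the paper: normalize by Luxemburg norms, invoke Corollary \ref{sum}, and use the convexity inequality $\Phi(t/2)\leq\tfrac{1}{2}\Phi(t)$ to absorb the factor of $2$. The only cosmetic difference is that the paper applies the convexity step before summing while you apply it after, which is mathematically identical.
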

\proof Take some $u>\lVert f\rVert_{L^\Psi(A)}$, $v>\lVert g\rVert_{L^\Theta(A)}$. By convexity and Corollary \ref{sum},\begin{align*}
    \sum_{x\in A}\Phi\left(\frac{|f(x)g(x)|}{2uv}\right)&\leq\frac{1}{2}\sum_{x\in A}\Phi\left(\frac{|f(x)g(x)|}{uv}\right) \\
    &\leq\frac{1}{2}\left(\sum_{x\in A}\Psi\left(\frac{|f(x)|}{u}\right)+\sum_{x\in A}\Psi\left(\frac{|g(x)|}{v}\right)\right) \\
    &\leq\frac{1}{2}(1+1)=1.
\end{align*}
Now let $u\to\lVert f\rVert_{L^\Psi(A)}$, $v\to\lVert g\rVert_{L^\Theta(A)}$. Then by definition of the Luxemburg norm,\begin{align*}
    \lVert fg\rVert_{L^\Phi(A)}\leq2uv=2\lVert f\rVert_{L^\Psi(A)}\lVert g\rVert_{L^\Theta(A)},
\end{align*}as desired.\endproof
\begin{theorem}[Bak's interpolation theorem on Orlicz spaces \cite{bak1995averages}]\label{thm: Bak's interpolation theorem on Orlicz spaces}
    Let $r\in [1,\infty).$ Suppose that $T$ is simultaneously of weak types $(r,r)$ and $(\infty, \infty),$ namely there exist constants $A$ and $B > 0$, such that 
    \begin{align*}
        \nu\left( \left\{x: |Tf(x)| > t \right\}\right) &\leq \left(\frac{A\|f\|_{L^r(d\mu)}}{t}\right)^r\\
        \|Tf\|_\infty &\leq B \|f\|_\infty. 
    \end{align*}
    Assume that a nice Young function $\Phi$ is given by $\Phi(s) = \int_0^s \phi(t)dt$, where $\phi:[0,\infty)\to [0,\infty)$ is a nondecreasing function such that $\phi(t)=0$ for $0\leq t\leq 1$ and $\phi(t)>0$ for $t>1$. Also assume that there exist constants $c > 1$, $C_0$, and $C_1$ such that
    \begin{equation*} 
        \int_1^u \frac{\phi(t)}{t^r}dt\leq C_0 
        \frac{\phi(u)}{u^{r-1}} \text{          for }  u>1,
    \end{equation*}
    and for all $\lambda >1,$
    \begin{equation*}
        \phi(\lambda)\leq C_1\frac{\phi(\lambda t)}{\phi(t)} \text{          for }  t\geq c.
    \end{equation*}
Then there exists a constant $C=\max\{2, rC_0C_1\}$ depending only on $\Phi$ and $r$ such that
\begin{equation}
    \|Tf\|_{L^\Phi(d\nu)}\leq CB\Phi^{-1}\left(\left(\frac{A}{B}\right)^r\right)\|f\|_{L^\Phi(d\mu)}.
\end{equation}

\end{theorem}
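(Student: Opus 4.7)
The plan is to follow Bak's Marcinkiewicz-style interpolation argument, adapted to the Orlicz framework. First I would normalize: by rescaling $T$ to $T/B$ it suffices to assume $B = 1$, so the target becomes $\|Tf\|_{L^\Phi(d\nu)} \leq C\Phi^{-1}(A^r)\|f\|_{L^\Phi(d\mu)}$; by homogeneity I may further assume $\|f\|_{L^\Phi(d\mu)} = 1$, set $\Lambda = \Phi^{-1}(A^r)$ and $k = C\Lambda$, and aim to show $\int \Phi(|Tf|/k)\,d\nu \leq 1$.

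For each level $s \geq 1$, decompose $f = f^s + f_s$ with $f^s = f\chi_{\{|f|>ks/2\}}$. The weak-type $(\infty,\infty)$ bound forces $\|Tf_s\|_\infty \leq ks/2$, so $\{|Tf|>ks\} \subset \{|Tf^s|>ks/2\}$, and the weak-type $(r,r)$ bound gives $\nu(\{|Tf^s|>ks/2\}) \leq (2A\|f^s\|_{L^r(d\mu)}/(ks))^r$. Using the layer-cake representation
$$\int\Phi(|Tf|/k)\,d\nu = \int_1^\infty \phi(s)\,\nu(\{|Tf|>ks\})\,ds$$
(valid since $\phi = 0$ on $[0,1]$) together with the layer-cake expansion
$$\int|f|^r\chi_{\{|f|>\tau\}}\,d\mu = \tau^r\mu(\{|f|>\tau\}) + r\int_\tau^\infty t^{r-1}\mu(\{|f|>t\})\,dt,$$
Fubini, and the first hypothesis (to bound $\int_1^{2|f|/k}\phi(s)/s^r\,ds \leq C_0\phi(2|f|/k)/(2|f|/k)^{r-1}$), one arrives after algebra at a bound of the shape
$$\int \Phi(|Tf|/k)\,d\nu \leq (1+rC_0)A^r\int\Phi(2|f|/k)\,d\mu.$$
The factor of $r$ in the final constant is thus inherited from the layer-cake decomposition of $\|f^s\|_{L^r(d\mu)}^r$.

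To close the estimate, I apply the second hypothesis with $\lambda = k/2$ and $t = 2|f|/k$: rearranging to $\phi(2|f|/k) \leq C_1\phi(|f|)/\phi(k/2)$ on the set $\{2|f|/k\geq c\}$ and integrating via layer cake gives $\int\Phi(2|f|/k)\,d\mu \lesssim 2C_1/(k\phi(k/2))$ on the main region, plus a small contribution from $\{1\leq 2|f|/k<c\}$ controlled by the monotonicity bound $\Phi(2|f|/k) \leq \Phi(|f|)$ (valid for $k\geq 2$) combined with Markov's inequality $\mu(\{|f|\geq k/2\})\leq 1/\Phi(k/2)$. Since $\Phi(\Lambda) = A^r$ and the convexity bound $\Lambda\phi(\Lambda) \geq \Phi(\Lambda)$ hold, for $k = C\Lambda$ with $C \geq 2$ we have $k\phi(k/2) \geq 2\Lambda\phi(\Lambda) \geq 2A^r$, so $\int\Phi(2|f|/k)\,d\mu \lesssim C_1/(CA^r)$. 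Combining, $\int \Phi(|Tf|/k)\,d\nu \lesssim (1+rC_0)C_1/C$, which is $\leq 1$ for $C = \max\{2, rC_0C_1\}$ up to absolute constants.

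The main obstacle will be the delicate bookkeeping required to obtain precisely the constant $\max\{2, rC_0C_1\}$ rather than something larger. The factor of $r$ must emerge from the layer-cake expansion of the weak $(r,r)$ estimate, since no $\Delta_2$ condition on $\Phi$ is available to compare $u\phi(u)$ with $\Phi(u)$ directly; and the "boundary region" $\{1\leq 2|f|/k < c\}$ where the second hypothesis does not apply must be shown to contribute negligibly. Carefully choosing the splitting threshold $ks/2$ and invoking Markov's inequality on this transitional set keeps the constants under control, so that the factors produced by the two hypotheses combine to exactly $rC_0 C_1$.
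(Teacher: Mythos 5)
This theorem is cited from Bak's 1995 paper \cite{bak1995averages} and is not proved in the present manuscript, so there is no internal proof to compare against. Your Marcinkiewicz-type strategy (split $f=f^s+f_s$ at level $ks/2$, control $\{|Tf|>ks\}$ via the weak $(r,r)$ bound applied to $f^s$, integrate against the layer-cake measure $\phi(s)\,ds$, apply Fubini and the first hypothesis) is the right skeleton, and the resulting estimate $\int\Phi(|Tf|/k)\,d\nu\leq(1+rC_0)A^r\int\Phi(2|f|/k)\,d\mu$ is correct.

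The gap is in the closing step. Plugging $\lambda=k/2$ and $t=2|f|/k$ into the second hypothesis makes the inequality $\phi(2|f|/k)\leq C_1\phi(|f|)/\phi(k/2)$ available only where $2|f|/k\geq c$, which produces the transitional set $\{1\leq 2|f|/k<c\}$ that you then claim is negligible. It is not: Markov's inequality plus monotonicity bounds that piece by $\Phi(c)/\Phi(k/2)\leq 2\Phi(c)/(CA^r)$, so after multiplying by $(1+rC_0)A^r$ you are left with a term of order $(1+rC_0)\Phi(c)/C$, which depends on $c$ and $\Phi$ and cannot be absorbed into $\max\{2,rC_0C_1\}$. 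The fix is to use the other substitution, $\lambda=2|f|/k$ (equivalently, the layer-cake variable $u=2\tau/k>1$) and $t=k/2$: then the only requirement is $k/2\geq c$, which is a condition on $k$ alone, and the second hypothesis applies on the entire set $\{2|f|/k>1\}$ with no boundary region. Integrating $\phi(sb)\geq\phi(s)\phi(b)/C_1$ in $s$ yields the pointwise bound $\Phi(a)\leq C_1\Phi(ab)/(b\phi(b))$ for $a>1$, $b\geq c$, and therefore $\int\Phi(2|f|/k)\,d\mu\leq 2C_1/(k\phi(k/2))$. A further caution: even with this repair, the elementary chain $k\phi(k/2)\geq C\Lambda\phi(\Lambda)\geq C\Phi(\Lambda)=CA^r$ forces the choice $C\geq 2C_1(1+rC_0)$, which is not the same as the stated $\max\{2,rC_0C_1\}$; obtaining the sharper constant would require a finer bookkeeping argument that your sketch does not supply.
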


\begin{corollary}\label{Coro: Bak's interpolation on normalized Orlicz norm}
    Given a nice Young function $\Phi$ satisfying the conditions in Theorem \ref{thm: Bak's interpolation theorem on Orlicz spaces}, and a function $f:\mathbb{Z}_N^d \to \mathbb{C}$. Let $E\subset \mathbb{Z}_N^d$. Then the following inequality holds:
\begin{align*}
    \|f\|_{L^\Phi(E)} &\leq C'(\Phi)\Phi^{-1}(|E|)\|f\|_{L^\Phi(\mu_E)},\\
    \|f\|_{L^\Phi(\mu_E)} &\leq D'(\Phi)\Phi^{-1}\left(\frac{1}{|E|}\right)\|f\|_{L^\Phi(E)}.
\end{align*}
where $C'(\Phi)$ and $D'(\Phi)$ are only dependent on $\Phi.$
\end{corollary}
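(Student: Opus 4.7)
The plan is to derive both bounds by applying Bak's interpolation theorem (Theorem \ref{thm: Bak's interpolation theorem on Orlicz spaces}) to the identity map $T = \mathrm{id}$, viewed as an operator between $L^\Phi$ spaces on $E$ equipped with different measures. In each case I take $r = 1$ (any $r \in [1,\infty)$ would work equally well) and note that the $L^\infty$ bound is automatic with $B = 1$, since the $L^\infty$ norm is insensitive to the choice between counting and normalized counting measure.

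For the first inequality, I set $d\mu = d\mu_E$ and let $d\nu$ be the counting measure on $E$. Chebyshev applied with respect to $\nu$ yields
\[
\nu(\{x\in E : |f(x)|>t\}) \;\leq\; \frac{1}{t}\sum_{x\in E}|f(x)| \;=\; \frac{|E|}{t}\|f\|_{L^1(\mu_E)},
\]
so the weak-type $(1,1)$ constant is $A = |E|$. Bak's theorem then produces
\[
\|f\|_{L^\Phi(E)} \;\leq\; C\,\Phi^{-1}\!\bigl((|E|/1)^1\bigr)\|f\|_{L^\Phi(\mu_E)} \;=\; C\,\Phi^{-1}(|E|)\|f\|_{L^\Phi(\mu_E)},
\]
with $C = C'(\Phi)$ depending only on $\Phi$. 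For the second inequality, I swap the measures: now $d\mu$ is counting and $d\nu = d\mu_E$, so Chebyshev reads
\[
\nu(\{x\in E : |f(x)|>t\}) \;=\; \frac{|\{x\in E:|f(x)|>t\}|}{|E|} \;\leq\; \frac{1}{|E|\,t}\|f\|_{L^1(E)},
\]
giving $A = |E|^{-1}$, and Bak's theorem then yields $\|f\|_{L^\Phi(\mu_E)} \leq D\,\Phi^{-1}(1/|E|)\|f\|_{L^\Phi(E)}$ with $D = D'(\Phi)$.

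I do not expect any real obstacle; the argument is essentially a bookkeeping exercise exploiting the asymmetric scaling $\Phi^{-1}((A/B)^r)$ in Bak's conclusion, which interpolates the trivial $L^\infty$ identity against the sharp $L^1$ scaling factor produced by switching between counting and normalized counting measure. The structural hypotheses on $\phi$ needed to invoke Bak's theorem are inherited from the statement of this corollary, so no further verification of those growth conditions is needed.
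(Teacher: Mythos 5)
Your proposal is correct and takes essentially the same route as the paper: apply Bak's interpolation theorem to the identity operator with $r=1$ and $B=1$, using Chebyshev to get the weak-$(1,1)$ constant $A=|E|$ (resp.\ $A=|E|^{-1}$) when passing from normalized to counting measure (resp.\ the reverse), yielding the factor $\Phi^{-1}(|E|)$ (resp.\ $\Phi^{-1}(1/|E|)$). Your write-up is in fact a bit more explicit than the paper's one-line "similarly" for the second inequality, but the argument is the same.
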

\begin{proof}
    Let $\nu$ be the measure such that $\|f\|_{L^\Phi(d\nu)} = \|f\|_{L^\Phi(E)}$, and $m$ be the measure such that $\|f\|_{L^\Phi(dm)} = \|f\|_{L^\Phi(\mu_E)}$.
    By Markov's inequality, we have 
    \begin{equation*}
        \nu\left( \left\{x: |f(x)| > t \right\}\right)\leq \frac{1}{t} \|f\|_{L^1(d\nu)} = \frac{1}{t} \|f\|_{L^1(E)}=\frac{|E|}{t}\|f\|_{L^1(\mu_E)}.
    \end{equation*}
    Therefore $T:=id$ is simultaneously of weak types $(1,1)$ and $(\infty, \infty),$ for $A= |E|$ and $B= 1$. Then by Theorem \ref{thm: Bak's interpolation theorem on Orlicz spaces}, there exists a constant $C'(\Phi)$ depending only on $\Phi$ such that 
    \begin{equation*}
        \|f\|_{L^\Phi(E)}\leq C'(\Phi)\Phi^{-1}(|E|)\|f\|_{L^\Phi(\mu_E)}. 
    \end{equation*}
    Similarly by letting $\|f\|_{L^\Phi(d\nu)} = \|f\|_{L^\Phi(\mu_E)}$ and $\|f\|_{L^\Phi(dm)} = \|f\|_{L^\Phi(E)}$, one can get the second inequality.
\end{proof}

\begin{theorem}[Hausdorff-Young inequality in Orlicz spaces \cite{Rao1968}]\label{Theorem: Hausdorff-Young inequality in Orlicz spaces}
Let $(\Phi,\Psi)$ be a continuous nice Young complementary pair such
that $\Phi \prec x^2$, $\Psi$ is differentiable and $\Psi'\prec x^r$ for some constant $r\geq 1$. 
Define $$J_\Phi(f):=\inf\{k>0: N^{-d}\sum_{x\in \mathbb{Z}_N^d}\Phi\left(\frac{|f(x)|}{k}\right)\leq\Phi(1)\}.$$
We have that for all $f:\mathbb{Z}_N^d\to \mathbb{C},$
$$J_{\Psi}(N^d\hat{f})\leq k_0J_\Phi(f)$$ where $k_0$ is a constant depending only on $\Phi$.
\end{theorem}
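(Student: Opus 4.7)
The plan is to derive this Hausdorff--Young-type estimate by a Marcinkiewicz-style real-interpolation argument between the two classical Fourier endpoints. The endpoints available for the map $f \mapsto N^d \hat f$ are the triangle-inequality bound $\sup_m |N^d \hat f(m)| \leq \sum_x |f(x)|$ and the Plancherel identity $N^{-d}\sum_m |N^d \hat f(m)|^2 = \sum_x |f(x)|^2$. Since $\Phi \prec x^2$, the Orlicz space $L^\Phi$ sits, in the partial order on nice Young functions, between $L^1$ and $L^2$; by complementarity $L^\Psi$ sits between $L^2$ and $L^\infty$, which is precisely the natural interpolation range for such an estimate.

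For each level $s>0$, I would split $f = f_s + f^s$ with $f_s := f\cdot\mathbf{1}_{\{|f|\leq\lambda(s)\}}$, where the threshold $\lambda(s)$ is yet to be chosen. Applying the triangle-inequality bound to $N^d\widehat{f^s}$ and combining Chebyshev with Plancherel on $N^d\widehat{f_s}$ produces a good-$\lambda$ distribution-function estimate
\begin{equation*}
\bigl|\{m : |N^d \hat f(m)| > 2s\}\bigr| \,\leq\, \frac{C}{s^2}\sum_{x\,:\,|f(x)| \leq \lambda(s)} |f(x)|^2,
\end{equation*}
once $\lambda(s)$ is chosen so that $\sum_{|f|>\lambda(s)}|f|\leq s$, which forces the $L^\infty$ contribution of $N^d\widehat{f^s}$ to be at most $s$. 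This is the Fourier-analytic analogue of the classical Marcinkiewicz good-$\lambda$ step.

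With the distribution-function bound in hand, I would invoke the layer-cake representation, which is available because $\Psi$ is differentiable:
\begin{equation*}
N^{-d}\sum_m \Psi\!\left(\frac{|N^d \hat f(m)|}{k}\right) \;=\; N^{-d}\int_0^\infty \Psi'(s)\,\Bigl|\Bigl\{m : \tfrac{|N^d \hat f(m)|}{k} > s\Bigr\}\Bigr|\,ds.
\end{equation*}
Substituting the distribution-function estimate and applying Fubini relates the right-hand side to the layer-cake integral for $N^{-d}\sum_x \Phi(|f(x)|/k)$. The growth condition $\Psi' \prec x^r$ guarantees absolute convergence of the inner integral and lets the tail of $\Psi$ be absorbed into constants that depend only on $\Phi$. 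Choosing $k$ slightly above $J_\Phi(f)$ then forces the right-hand side below $\Psi(1)$, and passing to the infimum delivers $J_\Psi(N^d\hat f) \leq k_0 J_\Phi(f)$.

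The main obstacle will be selecting the threshold $\lambda(s)$ and performing the change-of-variables cleanly enough that the interpolation constants collapse to a quantity depending only on $\Phi$. The hypothesis $\Psi' \prec x^r$ is exactly what permits the Fubini swap and absorbs the tail of $\Psi$ into a constant multiple of $\Phi$-layers; without this growth control, the Marcinkiewicz-type distribution-function estimate alone would not pass to Orlicz norms with a uniform constant.
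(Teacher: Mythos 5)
The paper does not prove this theorem; it is imported verbatim from Rao (1968) with a citation, and the only accompanying text is a remark explaining how to pass between normalized and general complementary Young pairs. So there is no internal argument to compare your sketch against; what I can do is assess the sketch on its own terms.

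Your overall framework --- splitting $f$ at a level-dependent threshold, applying the $L^1\!\to\!L^\infty$ bound to the large part and Plancherel plus Chebyshev to the small part, then feeding the resulting distribution-function bound into the layer-cake identity for $\Psi$ --- is a reasonable Marcinkiewicz-type strategy, and it is indeed the standard scaffolding for Hausdorff--Young results. But the core of such an argument is precisely the Fubini/change-of-variables computation that converts $\int_0^\infty\Psi'(s)\cdot N^{-d}|\{m:|N^d\hat f(m)|>2s\}|\,ds$ into a bounded multiple of $N^{-d}\sum_x\Phi(|f(x)|/k)$, and you defer it entirely. This is the step where the proof either closes or does not, and with your choice of threshold it is genuinely unclear it closes: you define $\lambda(s)$ implicitly by the tail condition $\sum_{|f|>\lambda(s)}|f|\leq s$, which makes $\lambda$ a functional of $f$ rather than a fixed power of $s$ as in the classical $L^p$ Marcinkiewicz proof. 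After Fubini one is left with $\sum_x|f(x)|^2\int_{\{s:\lambda(s)\geq|f(x)|\}}\Psi'(2s)\,s^{-2}\,ds$, and there is no visible mechanism tying the inner integral to $\Phi(|f(x)|)/|f(x)|^2$ without already knowing a quantitative relation between $\lambda$, $\Phi$, and $\Psi$; the complementarity of $(\Phi,\Psi)$ must enter here, and you never invoke it. The roles you attribute to the hypotheses $\Phi\prec x^2$ and $\Psi'\prec x^r$ (``absolute convergence,'' ``Fubini swap'') are speculative rather than verified --- on the finite group all sums are finite, so $\Psi'\prec x^r$ is not needed for convergence; its actual job must be to control the above inner integral and yield a constant independent of $N$, which you do not show. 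Finally, your displayed distribution estimate omits the normalization: Chebyshev plus the paper's Plancherel identity gives $|\{m:|N^d\hat f_s(m)|>s\}|\leq N^d s^{-2}\sum_{|f|\leq\lambda(s)}|f|^2$, so the $N^{-d}$ bookkeeping on each side of $J_\Phi$ versus $J_\Psi$ needs to be carried through before the constant can be seen to depend only on $\Phi$. As written, the proposal identifies the right toolbox but leaves the decisive computation --- the one that actually uses the hypotheses and produces $k_0=k_0(\Phi)$ --- unexamined.
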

\begin{remark}
    Note that in the original paper, this result is proven for normalized Young functions, that is, complementary Young functions $(\Phi,\Psi)$ satisfying $\Phi(1)+\Psi(1)=1$.
However, as discussed in \cite{RaoRen}, general complementary Young functions are readily
converted to this form.
\end{remark}
\begin{corollary}\label{orliczhausdorff}[Normalized Hausdorff-Young inequality]
    Let $(\Phi,\Psi)$ be a continuous nice Young complementary pair such
that $\Phi \prec x^2$, $\Psi$ is differentiable and $\Psi'\prec x^r$ for some constant $r\geq 1$. We have that for $f:\mathbb{Z}_N^d\to \mathbb{C},$
    \begin{equation}
\|\hat{f}\|_{L^{\Psi}\left(\mathbb{Z}_N^d\right)}\leq K(\Phi)\Phi^{-1}\left(\frac{N^{-d}}{\Phi(1)}\right)\|f\|_{L^\Phi\left(\mathbb{Z}_N^d\right)}, 
    \end{equation}
    where $K(\Phi)$ depends on $\Phi$ only.
\end{corollary}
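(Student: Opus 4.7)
The plan is to deduce the corollary from Theorem \ref{Theorem: Hausdorff-Young inequality in Orlicz spaces} by converting both sides of the modular inequality $J_\Psi(N^d\hat{f}) \leq k_0 J_\Phi(f)$ into Luxemburg gauge norms.

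On the Fourier side, I would first use the homogeneity $J_\Psi(N^d\hat{f})=N^d J_\Psi(\hat{f})$ together with the definition of $J_\Psi$ to produce some $k\leq k_0 J_\Phi(f)/N^d$ satisfying $\sum_m \Psi(|\hat{f}(m)|/k)\leq N^d\Psi(1)$. Rescaling $k$ by the factor $\alpha=N^d\Psi(1)\geq 1$ and applying the convexity inequality $\Psi(x/\alpha)\leq \Psi(x)/\alpha$ (which holds because $\Psi$ is convex with $\Psi(0)=0$) converts the constraint into $\sum_m\Psi(|\hat{f}(m)|/(\alpha k))\leq 1$, which by definition of the Luxemburg norm gives $\|\hat{f}\|_{L^\Psi(\mathbb{Z}_N^d)}\leq \Psi(1)\,k_0\,J_\Phi(f)$.

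On the source side, I would exploit the identity $J_\Phi(f)=\|f\|_{L^{\widetilde\Phi}(\mu_{\mathbb{Z}_N^d})}$, where $\widetilde\Phi=\Phi/\Phi(1)$ and $\mu_{\mathbb{Z}_N^d}$ is the normalized counting measure; this is immediate from the two definitions. Applying Corollary \ref{Coro: Bak's interpolation on normalized Orlicz norm} to $\widetilde\Phi$ with $E=\mathbb{Z}_N^d$ yields $J_\Phi(f)\leq D'(\widetilde\Phi)\,\widetilde\Phi^{-1}(N^{-d})\,\|f\|_{L^{\widetilde\Phi}(\mathbb{Z}_N^d)}$. A direct convexity-based scaling argument then compares $\|f\|_{L^{\widetilde\Phi}(\mathbb{Z}_N^d)}$ to $\|f\|_{L^\Phi(\mathbb{Z}_N^d)}$ and shows that they agree up to a factor depending only on $\Phi(1)$, and since $\widetilde\Phi^{-1}(y)=\Phi^{-1}(\Phi(1)\,y)$, the quasi-multiplicativity of $\Phi^{-1}$ (which follows from the finiteness of the Matuszewska--Orlicz indices of $\Phi$) lets us replace $\Phi^{-1}(\Phi(1)/N^d)$ by $\Phi^{-1}(N^{-d}/\Phi(1))$ at the cost of another $\Phi$-dependent constant. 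Composing with the Fourier-side estimate and absorbing every $\Phi$-dependent numerical factor into $K(\Phi)$ yields the statement.

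The main obstacle is the final bookkeeping step: carefully tracking the two distinct $\Phi(1)$-rescalings that appear (the modular constant built into the definition of $J_\Phi$, and the translation between $L^{\widetilde\Phi}$ and $L^\Phi$) so as to recover the precise factor $\Phi^{-1}(N^{-d}/\Phi(1))$ rather than the superficially similar expression $\Phi^{-1}(\Phi(1)/N^d)$, whose arguments differ by a factor of $\Phi(1)^2$. Both discrepancies are $N$-independent and can be bundled into $K(\Phi)$.
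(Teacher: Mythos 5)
Your proposal is correct and follows the same overall skeleton as the paper's proof (convert both $J_\Psi$ and $J_\Phi$ in Theorem \ref{Theorem: Hausdorff-Young inequality in Orlicz spaces} into genuine Luxemburg norms, then compose), but the conversion mechanisms differ in detail. On the Fourier side you use a single elementary convexity rescaling with $\alpha=N^d\Psi(1)$, whereas the paper first applies $\Psi(N^d x)\geq N^d\Psi(x)$ to drop the $N^d$ inside the modular condition and then invokes a second Bak-type interpolation to pass from the modular threshold $\Psi(1)$ to the threshold $1$, contributing the constant $C'(\Psi)\Psi^{-1}(\Psi(1))$. Your route is cleaner and avoids that extra interpolation call. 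On the source side the paper applies Bak's theorem directly to the identity map between the $J_\Phi$-defining measure (which is $\frac{1}{\Phi(1)N^d}$ times counting measure) and counting measure, so it reads off $A=\frac{1}{\Phi(1)N^d}$ and lands immediately on $\Phi^{-1}(N^{-d}/\Phi(1))$, whereas you route through $\widetilde\Phi=\Phi/\Phi(1)$, apply Corollary \ref{Coro: Bak's interpolation on normalized Orlicz norm}, and then bookkeep the two $\Phi(1)$-rescalings back to the target expression. The bookkeeping you describe is correct, but you do not need quasi-multiplicativity of $\Phi^{-1}$ or finite Matuszewska--Orlicz indices for the step $\Phi^{-1}(\Phi(1)/N^d)\lesssim\Phi^{-1}(N^{-d}/\Phi(1))$: since $\Phi^{-1}$ is increasing and concave with $\Phi^{-1}(0)=0$, one has $\Phi^{-1}(cy)\leq\max\{c,1\}^2\Phi^{-1}(y/c)$ for every fixed $c>0$, so the factor is controlled by $\Phi(1)$ alone. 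Two minor caveats that apply equally to your argument and the paper's: the convexity step implicitly assumes $N^d\Psi(1)\geq1$ (harmless for large $N$), and applying Bak's interpolation to $\widetilde\Phi$ requires $\widetilde\Phi$ to satisfy the same structural hypotheses as $\Phi$ in Theorem \ref{thm: Bak's interpolation theorem on Orlicz spaces}, which it does since those hypotheses are invariant under multiplication by a positive constant.
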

\begin{proof}
    Let $\nu$ be the measure such that $\|f\|_{L^\Phi(d\nu)} = J_\Phi(f)$, and $m$ be the measure such that $\|f\|_{L^\Phi(dm)} = \|f\|_{L^\Phi(\mathbb{Z}_N^d)}$.
    By Markov's inequality, we have 
    \begin{equation*}
        \nu\left( \left\{x: |f(x)| > t \right\}\right)\leq \frac{1}{t} \|f\|_{L^1(d\nu)} \leq \frac{1}{t}\frac{1}{\Phi(1)N^d}\|f\|_{L^1(dm)}.
    \end{equation*}
    Therefore $T:=id$ is simultaneously of weak types $(1,1)$ and $(\infty, \infty),$ for $A= \frac{1}{\Phi(1)N^d}$ and $B= 1$. Then by Theorem \ref{thm: Bak's interpolation theorem on Orlicz spaces}, there exists a constant $D'(\Phi)$ depending only on $\Phi$ such that 
    \begin{equation*}
        J_\Phi(f)\leq D'(\Phi)\Phi^{-1}\left( \frac{N^{-d}}{\Phi(1)}\right)\|f\|_{L^\Phi(\mathbb{Z}_N^d)}.
    \end{equation*}
    Next, consider $J_\Psi(N^d\hat{f})$, we get 
    \begin{align*}
        J_\Psi\left(N^d\hat{f}\right)&:=\inf\{k>0: N^{-d}\sum_{m\in\mathbb{Z}_N^d}\Psi\left(\frac{N^d|\hat{f}|}{k}\right)\leq\Psi(1)\}\\
        &\geq \inf\{k>0:\sum_{m\in \mathbb{Z}_N^d}\Psi\left(\frac{|\hat{f}|}{k}\right)\leq \Psi(1)\}\\
        &\geq \|\hat{f}\|_{L^\Psi(\mathbb{Z}_N^d)}\cdot \frac{1}{C'(\Psi)\Psi^{-1}(\Psi(1))}\\
        &=\frac{1}{C'(\Psi)}\|\hat{f}\|_{L^\Psi(\mathbb{Z}_N^d)}.
    \end{align*}
    By Theorem \ref{Theorem: Hausdorff-Young inequality in Orlicz spaces}, we get that 
    \begin{equation*}
    J_\Psi(N^d\hat{f})\leq k_0J_\Phi(f).
    \end{equation*}
    Lastly, we connect everything and get:
    \begin{align*}
        \frac{1}{C'(\Psi)}\|\hat{f}\|_{L^\Psi(\mathbb{Z}_N^d)}&\leq k_0D'(\Phi)\Phi^{-1}\left(\frac{N^{-d}}{\Phi(1)}\right)\|f\|_{L^\Phi(\mathbb{Z}_N^d)}\\
        \|\hat{f}\|_{L^\Psi(\mathbb{Z}_N^d)}&\leq k_0C'(\Psi)D'(\Phi)\Phi^{-1}\left(\frac{N^{-d}}{\Phi(1)}\right)\|f\|_{L^\Phi (\mathbb{Z}_N^d)}\\
        &=K(\Phi)\Phi^{-1}\left(\frac{N^{-d}}{\Phi(1)}\right)\|f\|_{L^\Phi (\mathbb{Z}_N^d)},
    \end{align*}
    where $K(\Phi)=k_0C'(\Psi)D'(\Phi)$ is a constant depending only on $\Phi$. 
\end{proof}
\section{Uncertainty principles}
\subsection{Sharper Uncertainty Principles under Restriction Estimation}
Iosevich and Mayeli \cite{iosevich2023uncertaintyprinciplesfiniteabelian} recently established some uncertainty principle-type results under the assumption of $(p,q)-$restriction estimation, which is a notion from classical restriction theory.

\begin{definition}
    [$(p,q)-$restriction estimation] 
Let $S \subset \mathbb{Z}_N^d$.
We say a $(p,q)-$restr-iction estimation $(1 \leq p \leq q \leq \infty)$ holds for $S$ if there exists a uniform constant $C(p,q)$ which is independent of $N$ and $S$, so that for any function 
$f: \mathbb{Z}_N^d \to \mathbb{C}$, we have 
\begin{equation}
\|\hat{f}\|_{L^q(\mu_S)} \leq C(p,q)N^{-d}\|f\|_{L^p(\mathbb{Z}_N^d)}.
\end{equation}
\end{definition}

Under the assumption that the Fourier support of the given function satisfies certain $(p,q)-$restriction estimation, Iosevich and Mayeli showed the following sharper uncertainty principle on $\mathbb{Z}_N^d$. 
\begin{theorem}[Uncertainty Principle via $(p,q)-$Restriction Estimation \cite{iosevich2023uncertaintyprinciplesfiniteabelian}]
Suppose that $f: \mathbb{Z}_N^d \to \mathbb{C}$ is supported in $E \subset \mathbb{Z}_N^d$, and $\hat{f}: \mathbb{Z}^d_N \to \mathbb{C}$ is supported in $S \subset \mathbb{Z}_N^d$. Suppose that $(p,q)-$restriction estimation holds for $S$ for some $1 \leq p \leq q$. Then

\begin{equation}
    |E|^{\frac{1}{p}}\cdot|S|\geq\frac{N^d}{C(p,q)}.
\end{equation}
\end{theorem}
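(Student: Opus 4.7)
The plan is to sandwich $\|f\|_\infty$ between two expressions obtained from the Fourier side and the space side, then cancel it. Concretely, I would start from Fourier inversion: since $\widehat f$ is supported in $S$,
\begin{equation*}
\|f\|_\infty \le \sum_{m\in S}|\widehat f(m)| = \|\widehat f\|_{L^1(S)}.
\end{equation*}
Next I would normalize the measure on $S$ and apply ordinary Hölder's inequality with exponents $q$ and $q/(q-1)$ to obtain
\begin{equation*}
\|\widehat f\|_{L^1(S)} = |S|\cdot\frac{1}{|S|}\sum_{m\in S}|\widehat f(m)| \le |S|\,\|\widehat f\|_{L^q(\mu_S)}.
\end{equation*}

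Now I would invoke the hypothesis, namely the $(p,q)$-restriction estimate for $S$, to get
\begin{equation*}
\|\widehat f\|_{L^q(\mu_S)} \le C(p,q)\,N^{-d}\,\|f\|_{L^p(\mathbb{Z}_N^d)}.
\end{equation*}
Because $f$ is supported in $E$, the trivial bound $\|f\|_{L^p(\mathbb{Z}_N^d)} \le |E|^{1/p}\|f\|_\infty$ holds. Chaining the three inequalities yields
\begin{equation*}
\|f\|_\infty \le |S|\cdot C(p,q)\,N^{-d}\cdot |E|^{1/p}\,\|f\|_\infty.
\end{equation*}
Assuming $f\not\equiv 0$ (else the conclusion is vacuous), I cancel $\|f\|_\infty$ and rearrange to recover $|E|^{1/p}\cdot|S| \ge N^d/C(p,q)$, as desired.

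There is no real obstacle here; the argument is a four-line squeeze. The only point that requires a moment's care is keeping the normalization conventions consistent, since the restriction estimate is stated with the averaged norm $L^q(\mu_S)$ while Fourier inversion produces the unnormalized sum $\|\widehat f\|_{L^1(S)}$; the factor $|S|$ that appears in passing between the two is exactly what feeds the desired product $|E|^{1/p}|S|$ on the right-hand side. I also expect this clean $L^\infty$-cancellation scheme to be the blueprint for the Orlicz analogue (Theorem \ref{thm: UP via Restriction Estimation I}), with Lemma \ref{lem: Comparing Orlicz norm and $L^1-$norm} and Corollary \ref{lemma: Comparing Orlicz norm and norm after normalization} replacing the two elementary Hölder steps used above.
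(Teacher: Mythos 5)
Your four-step $L^\infty$-cancellation is correct, and since the paper cites this result from Iosevich and Mayeli rather than reproving it, there is no in-text argument to check against line by line. One substantive correction to your closing remark, though: the paper's proof of the Orlicz analogue, Theorem \ref{thm: UP via Restriction Estimation I}, does \emph{not} follow your $L^\infty$ blueprint. There the pointwise Fourier-inversion bound on $|f(y)|$ is summed over $y\in E$ to produce $\|f\|_{L^1(E)}$, and Lemma \ref{lem: Comparing Orlicz norm and $L^1-$norm} --- which dominates $\|f\|_{L^\Phi}$ by $\|f\|_{L^1}$ at the cost of the factor $\frac{|S|}{N^d\Phi^{-1}(|S|/N^d)}$ --- is then invoked so that $\|f\|_{L^1(E)}$, not $\|f\|_\infty$, is what gets cancelled. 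That $L^1$-side scheme produces a genuinely different inequality: specialising to $\Phi=x^p$, $\Psi=x^q$ it yields $|E|^{1/(2-1/p)}|S|\ge N^d/C(p,q)^{1/(2-1/p)}$ (see the example following Theorem \ref{thm: UP via Restriction Estimation I}), which is not the statement you just proved. If you instead push your $L^\infty$ scheme into the Orlicz setting --- replacing $\|f\|_{L^p(\mathbb{Z}_N^d)}\le|E|^{1/p}\|f\|_\infty$ by the equally elementary $\|f\|_{L^\Phi(\mathbb{Z}_N^d)}\le\|f\|_\infty/\Phi^{-1}(1/|E|)$ for $f$ supported on $E$ --- you would obtain $\Phi^{-1}(1/|E|)\le\Psi^{-1}(1)\,C(\Phi,\Psi)\,N^{-d}|S|$, which \emph{does} reduce exactly to the $(p,q)$ bound when $\Phi=x^p$, $\Psi=x^q$. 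So your route and the paper's Orlicz route are distinct; yours is arguably the more faithful Orlicz generalization of the cited theorem, while the paper's trades that fidelity for a different inequality coming from the $L^1$ side.
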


As Orlicz spaces are considered to be generlizations of the classical $L^p$ spaces. 
It's natural to think of extending the idea of $(p,q)$-restriction estimate to $(\Phi, \Psi)-$restric-tion estimate for some nice Young functions $\Phi$ and $\Psi$. 

\begin{definition}[$(\Phi,\Psi)-$restriction estimation] Let $S \subset \mathbb{Z}_N^d$. Given two nice Young functions $\Phi$, $\Psi$ with 
$x \prec \Phi \prec \Psi $. 
We say a $(\Phi,\Psi)-$restriction estimation holds for $S$ if there exist a uniform constant $C(\Phi,\Psi)$ which is independent of $N$ and $S$, so that for any function 
$f: \mathbb{Z}_N^d \to \mathbb{C}$, we have 
\begin{equation}
\|\hat{f}\|_{L^\Psi(\mu_S)} \leq C(\Phi,\Psi)N^{-d}\|f\|_{L^\Phi(\mathbb{Z}_N^d)}.
\end{equation}
\end{definition}
Our first result of a sharpened uncertainty principle via $(\Phi,\Psi)-$restriction estimation is the following:
\begin{theorem}[Uncertainty Principle via Restriction Estimation I]
\label{thm: UP via Restriction Estimation I}
Suppose $f:\mathbb{Z}_N^d\to \mathbb{C}$ which is supported in $E\subset \mathbb{Z}_N^d$, and $\hat{f}:\mathbb{Z}_N^d\to \mathbb{C}$ is supported in $S\subset \mathbb{Z}_N^d$. Suppose that $S$ satisfies a $(\Phi, \Psi)-$restriction estimation for two nice Young functions $\Phi, \Psi$ such that $x\prec \Phi \prec\Psi.$ Then

\begin{equation}
     |E|\geq \Phi^{-1}\left(\frac{|S|}{N^d} \right)\left( \frac{N^d}{|S|}\right)^2\frac{1}{\Psi^{-1}(1)C(\Phi,\Psi)}.
\end{equation}
\end{theorem}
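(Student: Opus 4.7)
The plan is to sandwich $\|f\|_\infty$ between itself via a short chain of standard norm estimates, and then cancel. Fourier inversion together with $supp(\hat{f})\subset S$ immediately yields $\|f\|_\infty\le|S|\,\|\hat{f}\|_{L^{1}(\mu_S)}$. Since the restriction hypothesis controls $\|\hat{f}\|_{L^{\Psi}(\mu_S)}$ in terms of $\|f\|_{L^{\Phi}(\mathbb{Z}_N^d)}$, the remaining work is to bridge $L^{1}(\mu_S)$ with $L^{\Psi}(\mu_S)$ on the frequency side, to bridge $L^{\Phi}(\mathbb{Z}_N^d)$ with $L^{1}(\mathbb{Z}_N^d)$ on the spatial side, and finally to close the loop through $\|f\|_{L^{1}(\mathbb{Z}_N^d)}\le|E|\,\|f\|_\infty$.

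On the frequency side, a direct Jensen's inequality on the probability measure $\mu_S$, using convexity of $\Psi$, gives $\|\hat{f}\|_{L^{1}(\mu_S)}\le\Psi^{-1}(1)\,\|\hat{f}\|_{L^{\Psi}(\mu_S)}$; only the fact that $\Psi$ is a convex nice Young function is used, which is implicit in $x\prec\Phi\prec\Psi$. On the spatial side, Lemma \ref{lem: Comparing Orlicz norm and $L^1-$norm} (whose hypothesis $x\prec\Phi$ is satisfied) produces the weighted estimate
\begin{equation*}
\|f\|_{L^{\Phi}(\mathbb{Z}_N^d)}\ \le\ \frac{|S|}{N^{d}\,\Phi^{-1}\!\left(|S|/N^{d}\right)}\,\|f\|_{L^{1}(\mathbb{Z}_N^d)}.
\end{equation*}

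Concatenating the five inequalities and invoking the $(\Phi,\Psi)$-restriction hypothesis $\|\hat{f}\|_{L^{\Psi}(\mu_S)}\le C(\Phi,\Psi)N^{-d}\|f\|_{L^{\Phi}(\mathbb{Z}_N^d)}$ produces
\begin{equation*}
\|f\|_\infty\ \le\ \frac{|S|^{2}\,\Psi^{-1}(1)\,C(\Phi,\Psi)\,|E|}{N^{2d}\,\Phi^{-1}\!\left(|S|/N^{d}\right)}\,\|f\|_\infty,
\end{equation*}
and, cancelling $\|f\|_\infty>0$ since $f$ is nontrivial and rearranging, one recovers exactly the stated lower bound on $|E|$.

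I do not foresee a serious obstacle: each step is a routine invocation of a previously recorded lemma or convexity fact. The only delicate bookkeeping is making sure the frequency-side step produces the clean constant $\Psi^{-1}(1)$ with no hidden factor of $|S|$, which is why I prefer the direct Jensen argument on the probability measure $\mu_S$ rather than routing through Lemma \ref{ptophi2}, whose constant is tailored to the unnormalized norm and would muddy the final bound.
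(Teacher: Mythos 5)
Your proposal is correct and follows essentially the same route as the paper's proof: Fourier inversion on $S$, a frequency-side bridge producing the constant $\Psi^{-1}(1)$, the $(\Phi,\Psi)$-restriction hypothesis, Lemma~\ref{lem: Comparing Orlicz norm and $L^1-$norm}, and then closing the loop via the support condition. The only cosmetic differences are that the paper gets $\Psi^{-1}(1)$ via H\"older (Theorem~\ref{holder}) together with the exact indicator-norm value from Lemma~\ref{lem: Orlicz norm of an indicator function}, whereas you apply Jensen on $\mu_S$ directly (the underlying computation is the same), and the paper closes by summing $|f(y)|$ over $E$ and cancelling $\lVert f\rVert_{L^1(E)}$ rather than cancelling $\lVert f\rVert_\infty$ after using $\lVert f\rVert_{L^1}\le|E|\lVert f\rVert_\infty$.
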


\begin{example}[$L^p,L^q$]
    Let $\Phi(x)=x^p$, $\Psi(x)=x^q$ for $1\leq p \leq q \leq \infty $. 
    Given function $f:\mathbb{Z}_N^d \to \mathbb{C}$ with support $E$, and $\hat{f}$ supported in $S$. Suppose $S$ satisfy $(p,q)$-restriction estimation.
    We have
\begin{equation*}
    \left(\frac{|S|}{N^d}\right)^\frac{1}{p}
    \left( \frac{N^d}{|S|}\right)^2
    \frac{1}{C(p,q)}
    \leq |E| 
\end{equation*}
After simplifying this equation, we have 
\begin{equation*}
    |E|^{\frac{1}{2-\frac{1}{p}}}|S| \geq \frac{N^d}{C(p,q)^{\frac{1}{2-\frac{1}{p}}}}.
\end{equation*}
\end{example}


\begin{theorem}[An Uncertainty Principle via Restriction Estimation II]
\label{thm: UP via Restriction Estimation II}
Suppose $f:\mathbb{Z}_N^d\to \mathbb{C}$ which is supported in $E\subset \mathbb{Z}_N^d$, and $\hat{f}:\mathbb{Z}_N^d\to \mathbb{C}$ is supported in $S\subset \mathbb{Z}_N^d$. Suppose that $S$ satisfies a $(\Phi, \Psi)-$restriction estimation for two nice Young functions $\Phi, \Psi$ such that $x\prec \Phi \prec\Psi.$ 

(i) If $\Psi \succ x^2$, then 
\begin{equation*}
   |E||S|^3 \left(\Phi^{-1}\left(\frac{|S|}{N^d}\right)\right)^{-2}\geq \frac{N^{3d}}{(\Phi^{-1}(1))^2C(\Phi, \Psi)^2},
\end{equation*}
and 
\begin{equation*}
    \left(\Phi^{-1}(|E|)\right)^{2}|E|^{-1}|S|\geq \frac{N^d}{\left(C(\Phi,\Psi)C'(\Phi)\right)^2},
\end{equation*}
if $\Phi$ satisfies the assumptions in Theorem \ref{thm: Bak's interpolation theorem on Orlicz spaces}. $C'(\Phi)$ here is a constant only dependent on $\Phi,$ which one can get following Corollary \ref{Coro: Bak's interpolation on normalized Orlicz norm}.

(ii) If $\Psi \prec x^2$, and $\Psi^\star$ is differentiable with $\Psi^{\star '}\prec x^r$ for some constant $r\geq1$, then
\begin{align*}
&\Phi^{-1}\left(|E|\right)\left(\Psi^\star\right)^{-1}\left(\frac{1}{|E|}\right)\Psi^{-1}\left(|S|\right)\\&\geq \frac{1}{\Psi^{-1}\left(N^{-d}\frac{1}{\Psi(1)}\right)C(\Phi,\Psi)}\cdot \left(C'(\Psi)K(\Psi)C'(\Phi)D'(\Psi^\star)\right)^{-1}
    \end{align*}
if $\Phi, \Psi$ satisfy the assumptions in Theorem \ref{thm: Bak's interpolation theorem on Orlicz spaces}. $C'(\Phi),C'(\Psi), D'(\Psi^\star)$ here are constants dependent only on $\Phi$ or $\Psi$, respectively, and can be found explicitly following inequalities in Corollary \ref{Coro: Bak's interpolation on normalized Orlicz norm}. $K(\Psi)$ is a constant dependent only on $\Psi$, and can be found explicitly following the normalized Hausdorff-Young inequality. 
\end{theorem}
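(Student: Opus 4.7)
Both parts proceed by chaining the $(\Phi,\Psi)$-restriction estimate together with the norm comparison lemmas from Section~3, using different ``bridges'' between the spatial and frequency sides: Plancherel's identity in part (i), and the Orlicz Hausdorff-Young inequality (Corollary~\ref{orliczhausdorff}) in part (ii).

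For part (i), I would begin with the normalized form of Plancherel
$\|\hat f\|_{L^2(\mu_S)}^2 = \frac{|E|}{N^d|S|}\|f\|_{L^2(\mu_E)}^2$,
then use $\Psi\succ x^2$ together with an adaptation of Corollary~\ref{ptophi} to the measure $\mu_S$ to obtain $\|\hat f\|_{L^2(\mu_S)}\leq \Psi^{-1}(1)\|\hat f\|_{L^\Psi(\mu_S)}$, and apply the $(\Phi,\Psi)$-restriction to bound the right side by $C(\Phi,\Psi)N^{-d}\|f\|_{L^\Phi(E)}$. For the first inequality I would then pass from $L^\Phi(E)$ to $L^1(E)$ via Lemma~\ref{lem: Comparing Orlicz norm and $L^1-$norm}, apply Cauchy-Schwarz $\|f\|_{L^1(E)}\leq |E|^{1/2}\|f\|_{L^2(E)}$, and cancel $\|f\|_{L^2(E)}^2$ (assuming $f\not\equiv 0$). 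For the second inequality I would instead pass from $L^\Phi(E)$ to $L^\Phi(\mu_E)$ using Corollary~\ref{Coro: Bak's interpolation on normalized Orlicz norm}, which produces the factor $C'(\Phi)\Phi^{-1}(|E|)$; the $|E|^{-1}$ in the final bound appears naturally from the $|E|/(N^d|S|)$ coefficient in the normalized Plancherel identity. Closing the loop uses the embedding $L^2(\mu_E)\hookrightarrow L^\Phi(\mu_E)$, which gives a constant depending only on $\Phi$.

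For part (ii), the hypothesis $\Psi\prec x^2$ means Plancherel no longer pairs cleanly with $L^\Psi$ on the Fourier side; instead the pair $(\Psi,\Psi^\star)$ satisfies the hypotheses of the normalized Orlicz Hausdorff-Young inequality, with the roles of $(\Phi,\Psi)$ in Corollary~\ref{orliczhausdorff} played by $(\Psi,\Psi^\star)$. Applying that corollary to $\hat f$ and using the Fourier inversion identity $\widehat{\hat f}(x)=N^{-d}f(-x)$ gives
\begin{equation*}
\|f\|_{L^{\Psi^\star}(E)} \leq N^d K(\Psi)\,\Psi^{-1}\!\bigl(N^{-d}/\Psi(1)\bigr)\,\|\hat f\|_{L^\Psi(S)}.
\end{equation*}
From here I would chain four further steps: Corollary~\ref{Coro: Bak's interpolation on normalized Orlicz norm} converts $\|\hat f\|_{L^\Psi(S)}$ into $C'(\Psi)\Psi^{-1}(|S|)\|\hat f\|_{L^\Psi(\mu_S)}$; the $(\Phi,\Psi)$-restriction bounds this by $C(\Phi,\Psi)N^{-d}\|f\|_{L^\Phi(E)}$; Corollary~\ref{Coro: Bak's interpolation on normalized Orlicz norm} applied again yields $C'(\Phi)\Phi^{-1}(|E|)\|f\|_{L^\Phi(\mu_E)}$; and the reverse direction of the same corollary applied to $\Psi^\star$ replaces $\|f\|_{L^{\Psi^\star}(E)}$ on the left by $\|f\|_{L^{\Psi^\star}(\mu_E)}/[D'(\Psi^\star)(\Psi^\star)^{-1}(1/|E|)]$. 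Finally, the embedding $L^\Phi(\mu_E)\hookrightarrow L^{\Psi^\star}(\mu_E)$, which follows from the chain $\Phi\prec\Psi\prec x^2\prec\Psi^\star$, lets me cancel a common factor of $\|f\|_{L^\Phi(\mu_E)}$ and rearrange to the claimed inequality.

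\textbf{Main obstacle.} The principal difficulty is bookkeeping: keeping track of Orlicz norms across the five measure settings ($E$, $\mu_E$, $S$, $\mu_S$, and $\mathbb{Z}_N^d$) and identifying the correct comparison lemma at each step. In part (ii) the most delicate point is the application of Corollary~\ref{orliczhausdorff} to $\hat f$ rather than $f$, where the $N^d$ factor produced by Fourier inversion must be reconciled with the $\Psi^{-1}(N^{-d}/\Psi(1))$ factor from the Hausdorff-Young constant. The final embeddings used to close each chain contribute additional constants depending only on $\Phi$ and $\Psi$, which can be absorbed into the stated multiplicative factors.
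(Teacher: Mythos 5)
Your proposal follows the same overall strategy as the paper for both parts: combine the $(\Phi,\Psi)$-restriction estimate with Plancherel (part~(i)) or the Orlicz Hausdorff--Young bound applied to $\hat f$ (part~(ii)), then use the Section~3 comparison lemmas to close the loop across the various measure normalizations. For part~(ii) your chain is a reordering of the paper's and lands on the same bound. For part~(i), however, your constant bookkeeping does not reproduce the stated theorem. For the first bound the paper routes $\|f\|_{L^\Phi(E)}$ through Corollary~\ref{lemma: Comparing Orlicz norm and norm after normalization} to $\|f\|_{L^\Phi(\mu_E)}$, then uses $\Phi\prec x^2$ to pass (with no stated constant) to $\|f\|_{L^2(\mu_E)}$; this produces the $(\Phi^{-1}(1))^2$ in the theorem. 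Your shortcut $\|f\|_{L^\Phi(E)}\to\|f\|_{L^1(E)}\to\|f\|_{L^2(E)}$ via Lemma~\ref{lem: Comparing Orlicz norm and $L^1-$norm} and Cauchy--Schwarz, combined with the explicit $\Psi^{-1}(1)$ from your adaptation of Corollary~\ref{ptophi} to $\mu_S$, gives $(\Psi^{-1}(1))^2$ in place of $(\Phi^{-1}(1))^2$. For the second bound in part~(i), the paper's step $\|\hat f\|_{L^2(\mu_S)}\le\|\hat f\|_{L^\Psi(\mu_S)}$ carries no constant (implicitly assuming $\Psi(x)\ge x^2$ everywhere rather than merely $x^2\prec\Psi$), whereas your route introduces an extra $(\Psi^{-1}(1))^2$ that does not appear in the stated inequality. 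Your version is valid and arguably more honest about the embedding constant, but it proves a bound of the same shape with a different multiplicative factor, not the one in the theorem as written. To match the theorem exactly you would need to either adopt the paper's convention that $\Phi\prec x^2\prec\Psi$ implies constant-free embeddings on probability spaces, or track why the embedding constant cancels.
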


\begin{example}\label{expl: (p,q)-RE}
[$L^p,L^q$] 
    Let $\Phi(x)=x^p$, $\Psi(x)=x^q$  for $1\leq p \leq q \leq \infty $. 
    Given function $f:\mathbb{Z}_N^d \to \mathbb{C}$ with support $E$, and $\hat{f}$ supported in $S$. Suppose $S$ satisfy $(p,q)-$ restriction estimation.
    Then we have: 

(i) $1\leq p \leq 2\leq q \leq \infty $, in this case $C'(\Phi)=1$, so
\begin{equation*}
    |E|^\frac{2-p}{p}|S| \geq
    \frac{N^d}{C(p,q)^2}.
\end{equation*}

(ii) $1\leq p \leq q\leq 2$, in this case $C'(\Psi)=K(\Psi)=C'(\Phi)=D'(\Psi^\star)=1$, so
\begin{equation*}
    |E|^{\frac{(q'-p)q}{q'p}}|S|\geq \frac{N^d}{C(p,q)^q}.
\end{equation*}
These results match the one that Iosevich and Mayeli got in \cite{iosevich2023uncertaintyprinciplesfiniteabelian} Theorem 3.7. 
\end{example}

\subsection{Annihilating Pairs}
We turn now to the topic of annihilating pairs. The first result of this type in the context of $\mathbb{Z}_N^d$ was established by Ghobber and Jaming in 2011.
\begin{theorem}[Ghobber and Jaming \cite{Ghobber_2011}]{\label{ghobberjaming}}
    Let $f:\mathbb{Z}_N^d\to\mathbb{C}$. If $E,S\subset\mathbb{Z}_N^d$, $|E||S|<N^d$, then
    \begin{align*}
        \lVert f\rVert_{L^2(\mathbb{Z}_N^d)}\leq\left(1+\frac{N^\frac{d}{2}}{1-\sqrt{\frac{|E||S|}{N^d}}}\right)\left(\lVert f\rVert_{L^2(E^\complement)}+\lVert\hat{f}\rVert_{L^2(S^\complement)}\right).
    \end{align*}
\end{theorem}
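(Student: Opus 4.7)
The plan is to reduce to the classical concentration-operator argument: split $f = f\mathbf{1}_E + f\mathbf{1}_{E^\complement}$, then bound $\|f\mathbf{1}_E\|_{L^2(\mathbb{Z}_N^d)}$ by running the identity through Plancherel and splitting the Fourier side against $S$ and $S^\complement$. Because the problem is symmetric in the two "bad pieces" (the part off $E$ and the part of $\widehat f$ off $S$), only one nontrivial estimate needs to be made.

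First I would set $g := f\mathbf{1}_E$ and write
\begin{equation*}
\|g\|_{L^2(\mathbb{Z}_N^d)}^2 \;=\; \langle g, f\rangle \;=\; N^d\langle \widehat g, \widehat f\rangle \;=\; N^d\langle \widehat g\,\mathbf{1}_S, \widehat f\rangle + N^d\langle \widehat g, \widehat f\,\mathbf{1}_{S^\complement}\rangle,
\end{equation*}
using the Plancherel identity in the form given in Section 2. Two Cauchy--Schwarz applications, together with $\|\widehat f\|_{L^2(\mathbb{Z}_N^d)} = N^{-d/2}\|f\|_{L^2(\mathbb{Z}_N^d)}$ and $\|\widehat g\|_{L^2(\mathbb{Z}_N^d)} = N^{-d/2}\|g\|_{L^2(\mathbb{Z}_N^d)}$, produce
\begin{equation*}
\|g\|_{L^2}^2 \;\leq\; N^{d/2}\,\|\widehat g\,\mathbf{1}_S\|_{L^2}\,\|f\|_{L^2} \;+\; N^{d/2}\,\|g\|_{L^2}\,\|\widehat f\,\mathbf{1}_{S^\complement}\|_{L^2}.
\end{equation*}

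The decisive step is the Fourier-side concentration estimate for $\widehat g$ on $S$. From the definition of $\widehat g$ one has the pointwise bound $|\widehat g(m)| \leq N^{-d}\|g\|_{L^1} \leq N^{-d}\sqrt{|E|}\,\|g\|_{L^2}$, since $g$ is supported on $E$. Summing $|\widehat g|^2$ over $S$ yields
\begin{equation*}
\|\widehat g\,\mathbf{1}_S\|_{L^2(\mathbb{Z}_N^d)} \;\leq\; N^{-d}\sqrt{|E|\,|S|}\,\|g\|_{L^2(\mathbb{Z}_N^d)}.
\end{equation*}
Substituting this into the previous display, dividing by $\|g\|_{L^2}$, and writing $\alpha := \sqrt{|E|\,|S|/N^d} < 1$ gives
\begin{equation*}
\|g\|_{L^2(\mathbb{Z}_N^d)} \;\leq\; \alpha\,\|f\|_{L^2(\mathbb{Z}_N^d)} \;+\; N^{d/2}\,\|\widehat f\|_{L^2(S^\complement)}.
\end{equation*}

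Finally, the triangle inequality $\|f\|_{L^2} \leq \|g\|_{L^2} + \|f\|_{L^2(E^\complement)}$ combined with the last display yields $(1-\alpha)\|f\|_{L^2} \leq \|f\|_{L^2(E^\complement)} + N^{d/2}\|\widehat f\|_{L^2(S^\complement)}$, and rearranging (and using the crude bound $\tfrac{1}{1-\alpha} \leq 1 + \tfrac{N^{d/2}}{1-\alpha}$ to symmetrize the two constants) gives exactly the bound in Theorem~\ref{ghobberjaming}. The only delicate part is bookkeeping the Plancherel normalization $\|f\|_{L^2}^2 = N^d\|\widehat f\|_{L^2}^2$ consistently through the chain of inequalities; once that is pinned down, the argument is entirely elementary. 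No Orlicz-space machinery from the earlier sections is needed here.
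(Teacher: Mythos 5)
Your proof is correct. The paper cites this result from Ghobber--Jaming without reproving it, but your argument is precisely the standard Donoho--Stark concentration-operator calculation underlying it, and your bookkeeping of the paper's Plancherel normalization $\lVert f\rVert_{L^2}^2=N^d\lVert\hat f\rVert_{L^2}^2$ is consistent throughout. It is also essentially the same scheme the paper uses when proving the Orlicz generalization, Theorem~\ref{orliczap}: there the authors bound $\lVert\widehat{1_Ef}\rVert_{L^2(S)}$, lower-bound $\lVert\widehat{1_Ef}\rVert_{L^2(S^\complement)}$ by a triangle inequality against the full $L^2$ norm, and then re-expand $\widehat{1_Ef}=\hat f-\widehat{1_{E^\complement}f}$. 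Your version, which starts from the polarization identity $\lVert f\mathbf 1_E\rVert_{L^2}^2=N^d\langle\widehat{f\mathbf 1_E},\hat f\rangle$ and splits the Fourier side once against $S$ and $S^\complement$, is slightly cleaner in this $L^2$ special case because it produces $\lVert\hat f\rVert_{L^2(S^\complement)}$ directly and avoids the re-expansion; the paper's variant is the one that survives when the $L^2$ pairing is replaced by Orlicz duality. Two small points worth making explicit: you divide by $\lVert g\rVert_{L^2}$, so you should note that the case $g\equiv 0$ is trivial (then $\lVert f\rVert_{L^2}=\lVert f\rVert_{L^2(E^\complement)}$ and the claimed bound holds with the coefficient $1$); and the symmetrization $\tfrac{1}{1-\alpha}\le 1+\tfrac{N^{d/2}}{1-\alpha}$ relies on $N^{d/2}\ge 1$, which of course always holds here but deserves a word.
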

Note that the classical uncertainty principle follows as an immediate corollary of this result, for if $f$ were supported in $E$, $\hat{f}$ supported in $S$, and $|E||S|<N^d$, then the right hand side would be 0, forcing $f$ to be 0. As such, $f$ can only be nonzero if $|E||S|\geq N^d$. In this sense, these inequalities can be viewed as ``quantitative" versions of the uncertainty principle, giving bounds on the size of $f$ when the assumptions of the uncertainty principle are not fully satisfied. The assumption of a $(p,q)-$restriction estimate allows for an improvement to these results, established by Jaming, Iosevich and Mayeli.
\begin{theorem}[Iosevich, Jaming, and Mayeli \cite{jaming:hal-04953516}]{\label{iosevich}}
    Let $f:\mathbb{Z}_N^d\to\mathbb{C}$, $E,S\subset\mathbb{Z}_N^d$, and suppose that a $(p,q)-$ Fourier restriction estimate holds for $S$ with constant $C(p,q)$. If $1\leq p\leq2\leq q$ and $|E|^\frac{2-p}{p}|S|<\frac{N^d}{C(p,q)^2}$, then
    \begin{align*}
        \lVert f\rVert_{L^2(\mathbb{Z}_N^d)}\leq\left(1+\frac{N^\frac{d}{2}}{1-\sqrt{\frac{C(p,q)^2|E|^\frac{2-p}{p}|S|}{N^d}}}\right)\left(\lVert f\rVert_{L^2(E^\complement)}+\lVert\hat{f}\rVert_{L^2(S^\complement)}\right).
    \end{align*}
    If $1\leq p\leq q<2$ and $|E|^\frac{q-p}{p}|S|<\frac{N^d}{C(p,q)^q}$, then\begin{align*}
        \lVert f\rVert_{L^2(\mathbb{Z}_N^d)}\leq\left(1+\frac{N^\frac{d}{2}|E|^{\frac{1}{2}-\frac{1}{q}}}{1-\sqrt[q]{\frac{C(p,q)^q|E|^\frac{q-p}{p}|S|}{N^d}}}\right)\left(\lVert f\rVert_{L^2(E^\complement)}+\lVert\hat{f}\rVert_{L^2(S^\complement)}\right).
    \end{align*}
\end{theorem}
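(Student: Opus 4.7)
The plan is to adapt the Donoho--Stark / Ghobber--Jaming argument to the restriction setting. Let $P_E$ denote multiplication by $1_E$ and $Q_S$ the Fourier multiplier with symbol $1_S$, so that $\widehat{Q_S f}=1_S\hat f$. First I would split $f=P_E f+P_{E^c}f$ (disjoint supports give $\|f\|_{L^2}\le\|P_E f\|_{L^2}+\|f\|_{L^2(E^c)}$) and then $P_E f=P_E Q_S f+P_E Q_{S^c}f$. Plancherel identifies $\|Q_{S^c}f\|_{L^2}=N^{d/2}\|\hat f\|_{L^2(S^c)}$, yielding the master inequality
\[
\|f\|_{L^2}\le\|P_E Q_S f\|_{L^2}+N^{d/2}\|\hat f\|_{L^2(S^c)}+\|f\|_{L^2(E^c)}.
\]
Everything then reduces to an operator bound $\|P_E Q_S f\|_{L^2}\le\kappa\|f\|_{L^2}$ with $\kappa<1$; solving for $\|f\|_{L^2}$ and using $\tfrac{1}{1-\kappa}\le 1+\tfrac{N^{d/2}\mathsf c}{1-\kappa}$ yields the displayed coefficient.

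For case (i), $p\le 2\le q$, I would use that $\|P_E Q_S\|_{L^2\to L^2}=\|Q_S P_E\|_{L^2\to L^2}$ and, for $g=P_E f$ supported in $E$, chain Plancherel on $S$, the embedding $L^q(\mu_S)\hookrightarrow L^2(\mu_S)$ (valid since $q\ge 2$), the $(p,q)$-restriction estimate, and H\"older on $E$ (valid since $p\le 2$):
\[
\|Q_S g\|_{L^2}^2\le N^d|S|\,\|\hat g\|_{L^q(\mu_S)}^2\le C(p,q)^2 N^{-d}|S|\,\|g\|_{L^p}^2\le C(p,q)^2 N^{-d}|S|\,|E|^{(2-p)/p}\|g\|_{L^2}^2.
\]
This produces $\kappa=\sqrt{\alpha}$ with $\alpha=C(p,q)^2|E|^{(2-p)/p}|S|/N^d$ and $\mathsf c=1$, which is exactly the first stated bound after assembly.

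For case (ii), $p\le q<2$, the $L^q\hookrightarrow L^2$ embedding on $\mu_S$ goes the wrong way, so I would route through the H\"older-dual exponent $q'\ge 2$ in the spatial variable. Applied to $g=P_E f$, the chain is: H\"older on $E$ to get $\|Q_S g\|_{L^2(E)}\le|E|^{1/q-1/2}\|Q_S g\|_{L^{q'}}$; the Hausdorff--Young inequality $\|Q_S g\|_{L^{q'}}\le N^{d/q'}\|\hat g\|_{L^q(S)}$; the $(p,q)$-restriction estimate; and H\"older on $E$ once more to exchange $\|g\|_{L^p}$ for $\|g\|_{L^2}$. This produces $\kappa=\sqrt[q]{\alpha}$ with $\alpha=C(p,q)^q|E|^{(q-p)/p}|S|/N^d$; the coefficient $\mathsf c=|E|^{1/2-1/q}$ in the final bound arises by performing the analogous H\"older-on-$E$ step on the Fourier remainder $\|P_E Q_{S^c}f\|_{L^2}$ so that both terms share a common factor.

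The main obstacle will be case (ii): arranging the H\"older, Hausdorff--Young, and restriction steps in the right order so that the exponent on $|E|$ inside $\alpha$ collapses to exactly $(q-p)/p$ and so that the $|E|^{1/2-1/q}$ factor decorates precisely the Fourier-remainder term rather than the spatial remainder. Case (i) is, by contrast, a direct specialization of the Ghobber--Jaming argument with the classical Plancherel bound replaced by the $(p,q)$-restriction hypothesis; the rest of the work is bookkeeping.
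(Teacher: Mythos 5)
This theorem is quoted from \cite{jaming:hal-04953516}; the paper does not give its own proof, and the closest internal analogue, Theorem \ref{orliczap}, does not reproduce these constants when specialized to $\Phi(x)=x^p$, $\Psi(x)=x^q$ (its $\Psi\prec x^2$ branch yields a far more complicated $C$ with extra powers of $|S|$, $|E|$, $|S^\complement|$). So your attempt must be judged on its own terms. Your case (i) is correct: the master inequality $\|f\|_{L^2}\le\|P_EQ_Sf\|_{L^2}+N^{d/2}\|\hat f\|_{L^2(S^\complement)}+\|f\|_{L^2(E^\complement)}$ together with the chain Plancherel $\to$ Jensen on $\mu_S$ $\to$ restriction $\to$ H\"older on $E$ gives $\|Q_SP_E\|_{L^2\to L^2}\le\sqrt\alpha$, and since $P_E,Q_S$ are self-adjoint projections this transfers to $\|P_EQ_S\|$; solving yields the claim with a slightly sharper coefficient on $\|f\|_{L^2(E^\complement)}$ than stated. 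This is essentially the proof of Theorem \ref{orliczap} in the $\Psi\succ x^2$ case, reorganized through the operator $P_EQ_S$ instead of through the lower bound on $\lVert\widehat{1_Ef}\rVert_{L^2(S^\complement)}$.

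Case (ii), which you flag as ``the main obstacle'' without resolving, has a genuine gap. First, the chain you write estimates $\|Q_Sg\|_{L^2(E)}$ for $g$ supported in $E$, that is $\|P_EQ_SP_Ef\|_{L^2}$, which is not the $\|P_EQ_Sf\|_{L^2}$ the master inequality needs (the two operator norms are related by squaring, not equality). Second, and more seriously, the exponents do not close: the two H\"older-on-$E$ steps contribute $|E|^{(1/q-1/2)+(1/p-1/2)}=|E|^{1/p+1/q-1}$, while $\kappa=\sqrt[q]{\alpha}$ requires exactly $(q-p)/(pq)=1/p-1/q$. For $q<2$ these differ by $2/q-1>0$, so the chain produces $\kappa\cdot|E|^{2/q-1}$, a strictly weaker bound demanding a strictly stronger smallness hypothesis than the one in the statement. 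The proposed fix of applying H\"older on $E$ to $\|P_EQ_{S^\complement}f\|_{L^2}$ also does not work as described: that step leaves an $L^{q'}$ norm of $Q_{S^\complement}f$, hence an $L^q(S^\complement)$ norm of $\hat f$, and converting back to the $L^2(S^\complement)$ norm required in the conclusion introduces a factor of $|S^\complement|^{1/q-1/2}$ that is absent from the stated bound. The mechanism producing the precise exponent $1/p-1/q$ and the prefactor $N^{d/2}|E|^{1/2-1/q}$ is missing from the proposal.
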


As before, our contribution is to extend these results from the context of $(p,q)$-restriction estimates to that of $(\Phi,\Psi)$-restriction estimates.

\begin{theorem}{\label{orliczap}}
    Let $f:\mathbb{Z}_N^d\to\mathbb{C}$, $E,S\subset\mathbb{Z}_N^d$, and suppose that a $(\Phi,\Psi)$ Fourier restriction estimate holds for $S$ with constant $C(\Phi,\Psi)$. If $x\prec\Phi\prec x^2\prec\Psi$ and\begin{align*}
        |S|\left(\Phi^{-1}\left(\frac{1}{|E|}\right)\right)^{-2}|E|^{-1}<\frac{N^d}{4C(\Phi,\Psi)^2(\Psi^{-1}(1))^2},
    \end{align*}then\begin{align*}
        \lVert f\rVert_{L^2(\mathbb{Z}_N^d)}\leq\left(1+\frac{N^\frac{d}{2}}{1-\sqrt{\frac{4C(\Phi,\Psi)^2(\Psi^{-1}(1))^2|S|}{\left(\Phi^{-1}\left(\frac{1}{|E|}\right)\right)^2|E|N^d}}}\right)\left(\lVert f\rVert_{L^2(E^\complement)}+\lVert\hat{f}\rVert_{L^2(S^\complement)}\right).
    \end{align*}

    If $x\prec\Phi\prec\Psi\prec x^2$, $\Psi^\star$ is differentiable with $\Psi^{\star'}\prec x^r$ for some $r\geq1$ and\begin{align*}
       |S|\left(\Psi^\star\right)^{-1}\left(\frac{1}{|E|}\right)\left(\Phi^{-1}\left(\frac{1}{|E|}\right)\right)^{-1}<\frac{N^d}{2C(\Phi,\Psi)\Psi^{-1}\left(\frac{N^{-d}}{\Psi(1)}\right)K(\Psi)},
    \end{align*}then\begin{align*}
        \lVert f\rVert_{L^2(\mathbb{Z}_N^d)}\leq\left(1+C\right)\left(\lVert f\rVert_{L^2(E^\complement)}+\lVert\hat{f}\rVert_{L^2(S^\complement)}\right),
    \end{align*}
    where $$C= \frac{2N^d|E|^{\frac{1}{2}}\left(\Psi^{-1}\left(|S^\complement|^{-1}\right)\right)^{-1}|S^\complement|^{-\frac{1}{2}}}{\left(K(\Psi)\Psi^{-1}\left(\frac{N^{-d}}{\Psi(1)}\right)(\Psi^\star)^{-1}(|E|^{-1})\right)^{-1}-2C(\Phi,\Psi)|S|\left(\Phi^{-1}(|E|^{-1})\right)^{-1}}.$$
\end{theorem}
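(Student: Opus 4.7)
The plan is to adapt the Ghobber--Jaming/Iosevich--Jaming--Mayeli template (Theorems \ref{ghobberjaming}, \ref{iosevich}) to the Orlicz setting. Start with the triangle inequality $\|f\|_{L^2(\mathbb{Z}_N^d)} \le \|f\mathbf{1}_E\|_{L^2} + \|f\|_{L^2(E^\complement)}$, apply Plancherel to the first summand, and split the Fourier side to get
\begin{align*}
\|f\mathbf{1}_E\|_{L^2} = N^{d/2}\|\widehat{f\mathbf{1}_E}\|_{L^2} \le N^{d/2}\|\widehat{f\mathbf{1}_E}\|_{L^2(S)} + N^{d/2}\|\widehat{f\mathbf{1}_E}\|_{L^2(S^\complement)}.
\end{align*}
The strategy is to prove that $N^{d/2}\|\widehat{f\mathbf{1}_E}\|_{L^2(S)} \le \alpha\|f\mathbf{1}_E\|_{L^2}$ for some $\alpha<1$ governed by the hypothesis, absorb this term to the left, and expand $\|\widehat{f\mathbf{1}_E}\|_{L^2(S^\complement)}$ through $\widehat{f\mathbf{1}_E} = \widehat f - \widehat{f\mathbf{1}_{E^\complement}}$ so that only $\|\widehat f\|_{L^2(S^\complement)}$ and $\|f\|_{L^2(E^\complement)}$ appear on the right.

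For Case~1 ($\Psi\succ x^2$) the chain on $S$ is: Corollary \ref{ptophi} gives $\|\widehat{f\mathbf{1}_E}\|_{L^2(\mu_S)} \le \Psi^{-1}(1)\|\widehat{f\mathbf{1}_E}\|_{L^\Psi(\mu_S)}$, the $(\Phi,\Psi)$-restriction hypothesis passes to $C(\Phi,\Psi)N^{-d}\|f\mathbf{1}_E\|_{L^\Phi}$, and Theorem \ref{genholder} with auxiliary $\Theta^{-1}(x) = \Phi^{-1}(x)/x^{1/2}$ yields $\|f\mathbf{1}_E\|_{L^\Phi} \le 2\|f\|_{L^2(E)}/(|E|^{1/2}\Phi^{-1}(|E|^{-1}))$; multiplying by $N^{d/2}|S|^{1/2}$ and squaring produces exactly the quantity $\alpha^2$ appearing under the radical in the statement. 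For Case~2 ($\Psi\prec x^2$) the inclusion $L^\Psi(\mu_S) \supset L^2(\mu_S)$ fails, so I would instead use the Orlicz Hölder inequality in dual form
\begin{align*}
\|\widehat{f\mathbf{1}_E}\|_{L^2(S)}^2 \le 2\,\|\widehat{f\mathbf{1}_E}\|_{L^\Psi(S)}\,\|\widehat{f\mathbf{1}_E}\|_{L^{\Psi^\star}(\mathbb{Z}_N^d)},
\end{align*}
controlling the first factor by Lemma \ref{normalize} followed by restriction, and the second by the Orlicz Hausdorff--Young inequality (Corollary \ref{orliczhausdorff}) applied to the complementary pair $(\Psi,\Psi^\star)$---which is exactly where the assumptions $\Psi\prec x^2$ and $\Psi^{\star'}\prec x^r$ are used. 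Two further applications of Theorem \ref{genholder} bound $\|f\mathbf{1}_E\|_{L^\Phi}$ and $\|f\mathbf{1}_E\|_{L^\Psi}$ by $\|f\|_{L^2(E)}$, and the Young identity $\Psi^{-1}(x)(\Psi^\star)^{-1}(x) \asymp x$ converts the latter into a factor of $|E|^{1/2}(\Psi^\star)^{-1}(|E|^{-1})$. The factor $(\Psi^{-1}(|S^\complement|^{-1}))^{-1}|S^\complement|^{-1/2}$ appearing in the numerator of $C$ comes in parallel from bounding $\|\widehat{f\mathbf{1}_E}\|_{L^2(S^\complement)}$ via Lemma \ref{ptophi2} with $\Phi = \Psi^\star$ (legitimate since $\Psi^\star \succ x^2$ here) and once more invoking Hausdorff--Young.

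The main difficulty will be the bookkeeping of constants in Case~2: each Orlicz Hölder carries a factor of $2$, the Hausdorff--Young contributes $K(\Psi)\Psi^{-1}(N^{-d}/\Psi(1))$, and the conversions between $\Psi^{-1}(|E|^{-1})$ and $(\Psi^\star)^{-1}(|E|^{-1})$ through the Young identity must be tracked exactly, since it is precisely these conversions that generate the $(\Psi^\star)^{-1}(|E|^{-1})$ factors in the denominator of $C$. Verifying that everything aligns so that the stated hypothesis is exactly the positivity condition for the denominator of $C$ is the most delicate part; the choice of auxiliary Young function $\Theta$ in each application of Theorem \ref{genholder} is essentially forced by the requirement that $\|1\|_{L^\Theta(E)}$ evaluate cleanly against the $\Phi^{-1}(|E|^{-1})$ and $\Psi^{-1}(|E|^{-1})$ quantities appearing in the statement.
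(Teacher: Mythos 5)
Your Case~1 plan is essentially identical to the paper's: restrict on $S$ after passing from $L^2(\mu_S)$ to $L^\Psi(\mu_S)$ via Corollary~\ref{ptophi}, use the generalized H\"older inequality of Theorem~\ref{genholder} with $\Theta^{-1}(x)=x^{-1/2}\Phi^{-1}(x)$ to bound $\|f\mathbf{1}_E\|_{L^\Phi(E)}$ by $\|f\|_{L^2(E)}$, compute $\|1\|_{L^\Theta(E)}$ exactly, and then close by triangle inequality plus Plancherel. No issues there.

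Your Case~2 is a genuinely different route, and this is where the gap lies. The paper's proof of the second half never returns to $L^2$ on the frequency side until the very last step: it estimates $\|\widehat{1_Ef}\|_{L^\Psi(S)}$ from above by restriction, $\|\widehat{1_Ef}\|_{L^\Psi(\mathbb{Z}_N^d)}$ from below by Hausdorff--Young combined with Lemma~\ref{ptophi2}, applies the $L^\Psi$ triangle inequality to get a lower bound on $\|\widehat{1_Ef}\|_{L^\Psi(S^\complement)}$, and only at the end converts $\|\hat f\|_{L^\Psi(S^\complement)}$ and $\|\widehat{1_{E^\complement}f}\|_{L^\Psi(S^\complement)}$ to $L^2$ norms via Theorem~\ref{genholder} with $\Theta^{-1}(x)=x^{-1/2}\Psi^{-1}(x)$ and an exact computation of $\|1\|_{L^\Theta(S^\complement)}$. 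That last conversion is the unique source of the numerator factor $\left(\Psi^{-1}(|S^\complement|^{-1})\right)^{-1}|S^\complement|^{-1/2}$ in $C$. Your proposed dual-H\"older strategy $\|\widehat{1_Ef}\|_{L^2(S)}^2\leq 2\|\widehat{1_Ef}\|_{L^\Psi(S)}\|\widehat{1_Ef}\|_{L^{\Psi^\star}(\mathbb{Z}_N^d)}$, combined with absorption in $L^2$, is a coherent alternative, but it produces a Case~1-type bound $\bigl(1+N^{d/2}/(1-\alpha)\bigr)$ with a different $\alpha$; since you stay in $L^2$ on $S^\complement$, the quantity $\|\widehat{1_Ef}\|_{L^2(S^\complement)}$ is directly what you absorb, and there is neither occasion nor mechanism to invoke Lemma~\ref{ptophi2} with $\Phi=\Psi^\star$ on $S^\complement$. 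Your attempted explanation of how the $|S^\complement|$ factor arises is therefore not correct, and the constant you would obtain does not match the theorem's $C$. You would be proving a related inequality, not the stated one. You also anticipate but do not resolve the alignment of constants; if you work it out you will find that both your route and the paper's yield the positivity condition $|S|(\Psi^\star)^{-1}(|E|^{-1})\left(\Phi^{-1}(|E|^{-1})\right)^{-1}\lesssim \left(C(\Phi,\Psi)K(\Psi)\Psi^{-1}(N^{-d}/\Psi(1))\right)^{-1}$, without the extra factor of $N^d$ that appears in the theorem's hypothesis, which deserves a closer look.
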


\begin{corollary}[An Uncertainty Principle via Orlicz Fourier Restriction]{\label{orliczup}}
    Let $f:\mathbb{Z}_N^d\to\mathbb{C}$ be nonzero. Suppose $f$ is supported in $E$ and $\hat{f}$ is supported in $S$ for $E,S\subset\mathbb{Z}_N^d$, where $S$ satisfies a $(\Phi,\Psi)$ Fourier restriction estimate with constant $C(\Phi,\Psi)$. If $x\prec\Phi\prec x^2\prec\Psi$, then\begin{align*}
        |S|\left(\Phi^{-1}\left(\frac{1}{|E|}\right)\right)^{-2}|E|^{-1}\geq\frac{N^d}{4C(\Phi,\Psi)^2(\Psi^{-1}(1))^2}.
    \end{align*}
    If $x\prec\Phi\prec\Psi\prec x^2$, $\Psi^\star$ is differentiable with $\Psi^{\star'}\prec x^r$ for some $r\geq1$, then 
    $$|S|\left(\Psi^\star\right)^{-1}\left(\frac{1}{|E|}\right)\left(\Phi^{-1}\left(\frac{1}{|E|}\right)\right)^{-1}\geq \left(2C(\Phi,\Psi)\Psi^{-1}\left(\frac{N^{-d}}{\Psi(1)}\right)K(\Psi)\right)^{-1}.$$
\end{corollary}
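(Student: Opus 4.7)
The plan is to deduce Corollary \ref{orliczup} directly from Theorem \ref{orliczap} via a contrapositive argument, exactly mirroring how the classical uncertainty principle falls out of the Ghobber--Jaming inequality in Theorem \ref{ghobberjaming}. The key observation is that the support hypotheses of the corollary---namely, that $f$ is supported in $E$ and $\hat{f}$ in $S$---force the two error terms on the right-hand side of the annihilating-pair inequality to vanish identically: $\|f\|_{L^2(E^\complement)} = 0$ and $\|\hat{f}\|_{L^2(S^\complement)} = 0$. Consequently, if the smallness hypothesis in either case of Theorem \ref{orliczap} were satisfied, its conclusion would give $\|f\|_{L^2(\mathbb{Z}_N^d)} \le 0$, which is impossible for a nonzero $f$.

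Concretely, for the first case I would assume toward contradiction the strict reverse inequality
\[
|S|\left(\Phi^{-1}\left(\tfrac{1}{|E|}\right)\right)^{-2}|E|^{-1} < \frac{N^d}{4C(\Phi,\Psi)^2(\Psi^{-1}(1))^2}
\]
under the structural condition $x \prec \Phi \prec x^2 \prec \Psi$. Since this matches the hypothesis of the first case of Theorem \ref{orliczap} verbatim, invoking that theorem with the vanishing error terms produces the contradiction above, so the stated lower bound must hold. The second case is handled identically: under $x \prec \Phi \prec \Psi \prec x^2$ together with the differentiability of $\Psi^\star$ and $\Psi^{\star'} \prec x^r$, the strict reverse inequality is precisely the hypothesis of the second case of Theorem \ref{orliczap}, and the same argument again forces $\|f\|_{L^2(\mathbb{Z}_N^d)} \le 0$, contradicting $f \not\equiv 0$.

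Because the deduction is a one-line contrapositive, there is essentially no obstacle intrinsic to the corollary itself---all the analytical weight lives in Theorem \ref{orliczap}. The only bookkeeping I would double-check while writing this up is that the constants $C(\Phi,\Psi)$, $\Psi^{-1}(1)$, $K(\Psi)$, and $\Psi^{-1}\!\left(\frac{N^{-d}}{\Psi(1)}\right)$ appearing in the two thresholds of Theorem \ref{orliczap} coincide exactly with those in the statement of the corollary, so that the contradiction can be set up without any adjustment of constants, and that the $(\Phi,\Psi)$-restriction hypothesis on $S$ is indeed inherited---which is immediate, since it is assumed verbatim with the same constant.
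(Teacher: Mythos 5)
Your proposal is correct and is essentially identical to the paper's own proof: both argue by contradiction, applying Theorem~\ref{orliczap} and observing that the support hypotheses force the error terms $\lVert f\rVert_{L^2(E^\complement)}$ and $\lVert\hat f\rVert_{L^2(S^\complement)}$ to vanish, so that $\lVert f\rVert_{L^2(\mathbb{Z}_N^d)}=0$. One small thing your planned ``double-check of constants'' will turn up: in the second case the threshold in Theorem~\ref{orliczap} is $N^d/\bigl(2C(\Phi,\Psi)\Psi^{-1}(N^{-d}/\Psi(1))K(\Psi)\bigr)$ whereas the corollary's lower bound is $\bigl(2C(\Phi,\Psi)\Psi^{-1}(N^{-d}/\Psi(1))K(\Psi)\bigr)^{-1}$ without the $N^d$, so the two do not coincide exactly; however the discrepancy goes the right way (failure of the corollary's weaker bound implies the theorem's hypothesis, since $N^d\geq1$), so the contrapositive still closes and in fact yields a slightly stronger conclusion than stated.
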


\begin{example}[$L^p,L^q$]
    Let $\Phi(x)=x^p$, $\Psi(x)=x^q$ for $1\leq p \leq 2 \leq q \leq \infty.$ 
    Given function $f:\mathbb{Z}_N^d \to \mathbb{C}$ with support $E$, and $\hat{f}$ supported in $S$. By Corollary \ref{orliczup} we have: 
\begin{equation*}
    |E|^\frac{2-p}{p}|S| \geq
    \frac{N^d}{4C(p,q)^2}.
\end{equation*}
For $1\leq p\leq q\leq 2$,
we have:
$$|E|^{\frac{(q'-p)q}{pq'}}|S|^q\geq\frac{N^{d}}{2^qC(p,q)^q}.$$
\end{example}

Note that comparing the above uncertainty principles we got via annihilating pairs to the 
one we got via $(p,q)-$restriction estimation (See example \ref{expl: (p,q)-RE}), the latter one gives us a stronger bound. 

\subsection{Generalizations and Applications of Bourgain's $\Lambda_p$ theorem}

Consider a finite group $G$. 
Recall that the dual group $\hat{G}$ is the set of continuous functions 
$G\to \{z\in \mathbb{C}:|z|=1\},$ and we have that $\hat{G} \approx G$.
If $f\in L^2(G),\chi \in \hat{G}, \hat{f}(\chi) = |G|^{-1}\langle f,\chi\rangle.$ The characters $\chi$ form an orthogonal basis of $L^2(G)$ and $\|\chi\|_\infty =1$. 
\begin{definition}
    Let $S\subset \hat{G}$ and $L^2_S(G)$ be the closure of the span of the functions in $S$ in $L^2(G)$-norm. Let $p>2$. The set $S$ is called a $\Lambda_p-$set if $L^2_S(G)= L^p_S(G),$ i.e, there exists constant $C(p, S)$ such that, for every $f\in L^2_S(G),$
    $$\|f\|_{L^p(G)}\leq C(p, S)\|f\|_{L^2(G)}.$$
\end{definition} 
In 1989, Bourgain published the celebrated $\Lambda_p$ result, stated as follows:
\begin{theorem}[Bourgain's theorem on $\Lambda_p$-set \cite{bourgain1989lambda}]
    Let $\Psi = (\psi_1, ...,\psi_n)$ be a sequence of $n$ mutually orthogonal functions such that $\|\psi_i\|_\infty \leq 1$ for all $i = 1, ..., n.$ Let $2<p<\infty.$ Then there exists a subset $S$ of $\left\{1, ..., n\right\}, |S|>n^\frac{2}{p}$ satisfying 
    \begin{equation}\label{Bourgain's ineq}
        \left\|\sum_{i\in S}a_i\psi_i \right\|_{L^p(G)}\leq C(p)\left(\sum_{i\in S} |a_i|^2\right)^\frac{1}{2}
    \end{equation}
    for all scalar sequences $(a_i)_{i\in S}$. Here, $C(p)$ is a constant dependent only on $p$.
    Moreover, the inequality \ref{Bourgain's ineq} holds with high probability for a generic set $S$ of size $\lceil n^\frac{2}{p}\rceil.$
\end{theorem}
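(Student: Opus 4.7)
The plan is to establish the theorem via a probabilistic selection argument, which is the natural approach once one recognizes that any deterministic construction of such large $\Lambda_p$-sets is out of reach. First I would introduce independent $\{0,1\}$-valued selectors $\xi_1, \ldots, \xi_n$ with $\mathbb{E}[\xi_i] = \delta$ for some $\delta$ of order $n^{2/p - 1}$, so that the random set $S = \{i : \xi_i = 1\}$ has expected size of order $n^{2/p}$. A standard Chernoff bound ensures $|S| \geq n^{2/p}$ with probability tending to one, so the remaining task is to show that the $L^p$ inequality (\ref{Bourgain's ineq}) holds on $S$ with probability bounded away from zero, since the two events can then be arranged to co-occur by adjusting constants; this simultaneously handles the "high probability for a generic $S$" addendum, because the Bernoulli model and the uniform model on sets of size $\lceil n^{2/p}\rceil$ are exchangeable after conditioning on $|S|$.

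By homogeneity, it suffices to prove that with positive probability
$$\sup_{a \in B_{\ell^2}} \Bigl\| \sum_{i=1}^{n} \xi_i a_i \psi_i \Bigr\|_{L^p(G)} \leq C(p),$$
where $B_{\ell^2}$ denotes the unit ball of $\ell^2(\{1,\ldots,n\})$. The core technical step is a dyadic decomposition of the coefficient sequence: write $I_k = \{i : 2^{-k-1} < |a_i| \leq 2^{-k}\}$ and split the sum as $\sum_k \sum_{i \in I_k} \xi_i a_i \psi_i$. On each level $k$, the constraint $\|a\|_2 \leq 1$ bounds $|I_k|$ geometrically, and the mutual orthogonality together with $\|\psi_i\|_\infty \leq 1$ allows one to estimate the $L^p$ moment of each block via a Rosenthal-type or Marcinkiewicz-Zygmund inequality for sums of independent symmetrized variables. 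One then sums over $k$ and exploits the geometric decay in the number of active indices per level.

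The hard part, and what makes this theorem deep, is upgrading such pointwise-in-$a$ moment bounds to a statement that is uniform over the infinite-dimensional set $B_{\ell^2}$. This is the chaining/entropy step. The plan is to build an $\epsilon$-net in $B_{\ell^2}$ whose entropy is controllable in a metric compatible with the random process $a \mapsto \sum \xi_i a_i \psi_i$, take a sufficiently high moment at each net point so that a union bound survives the entropy cost, and bound the chaining tails by the monotonicity of the dyadic blocks. The metric one is forced to use mixes an $\ell^2$ component (from the expected square function) with an $\ell^\infty$ component (from the essentially bounded $\psi_i$), and controlling its covering numbers is the main obstacle; the resolution is a generic chaining estimate in the spirit of Talagrand's majorizing measure theorem, where the $L^\infty$ bound on the $\psi_i$ enters crucially to truncate the Bernoulli process and keep the entropy integral finite uniformly in $n$.
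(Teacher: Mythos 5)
The paper does not prove this theorem; it is quoted verbatim (with the minor probabilistic ``moreover'' added) from Bourgain's 1989 paper and used as a black box, so there is no in-paper argument to compare your proposal against. What I can do is assess your sketch on its own terms against the known proof.

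Your outline correctly identifies the skeleton of Bourgain's (and Talagrand's later) argument: Bernoulli selectors $\xi_i$ with mean $\delta \sim n^{2/p-1}$, reduction by homogeneity to bounding the supremum of a random process over the $\ell^2$ unit ball, a dyadic decomposition of the coefficient vector $a$, and a chaining/entropy step as the main difficulty. But as a proof proposal there are two gaps that would stop you well short of a proof. First, your treatment of the block estimates is circular: you propose to bound $\bigl\|\sum_{i\in I_k}\xi_i a_i\psi_i\bigr\|_{L^p}$ by ``a Rosenthal-type or Marcinkiewicz--Zygmund inequality for sums of independent symmetrized variables,'' but the random objects here are the \emph{selectors}, not the $\psi_i$, and any moment inequality of that type produces on the right-hand side quantities like $\bigl\|\sum_{i\in I_k}a_i\psi_i\bigr\|_{L^p}$ or the $L^p$ norm of the associated square function --- precisely the sort of quantity whose control is the content of the theorem. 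Orthogonality and $\|\psi_i\|_\infty\le 1$ give you $L^2$ and $L^\infty$ information, and the passage to $L^p$ for $p>2$ is exactly what needs a new idea; Bourgain handles this by a decoupling step and by analyzing the $L^p$ norm via a careful truncation of the $\psi_i$ that you do not mention. Second, your chaining step is stated at the level of ``use majorizing measures and the mixed $\ell^2$--$\ell^\infty$ metric''; this is the correct one-line summary of the hardest part of the proof, but writing down the admissible sequence of nets and showing the resulting entropy functional is bounded \emph{independently of $n$} (rather than logarithmically in $n$, which is what a naive union bound gives and which loses the theorem) is the whole ballgame, and the sketch gives no indication of how to do it. There is also a small quantitative slip: with $\mathbb{E}[\xi_i]=n^{2/p-1}$ the event $|S|\ge n^{2/p}$ has probability roughly $1/2$, not tending to one; you need $\delta$ a constant multiple larger, which is harmless but should be stated.

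In short: your sketch is a faithful high-altitude map of the Bourgain--Talagrand strategy, but the two places where it says ``this is the hard part'' are indeed the hard parts, and as written the proposal would not compile into a proof without supplying the decoupling argument and the uniform-in-$n$ entropy estimate, neither of which is routine.
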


\begin{remark}
    Let $\{\xi_i(\omega)\}_{i=1}^n$ be a set of independent $\{0,1\}-$valued random variables defined on the probability space $(\Omega, \nu).$ 
    Generic set $S$ of size $ n^\frac{2}{p}$ here means a random set 
    $S = S_\omega = \{i\in \left<n\right> :\xi_i(\omega)=1\}$ for $\omega\in \Omega$, where $\mathbb{E}(\xi_i) = n^{\frac{2}{p}-1}$ for all $i\in \left< n\right>.$ We use the same definition for generic set also for Theorem \ref{thm: Ryou} and \ref{thm: Limonova}. 
\end{remark}
\begin{corollary}
    Given a signal $f:\mathbb{Z}_N^d\to \mathbb{C}$, and $p>2$. 
    Then for a generic subset $S\subset \mathbb{Z}_N^d$ of size $\lceil N^{\frac{2d}{p}}\rceil$, if $\hat{f}$ is supported in $S$, we have 
    \begin{equation} \|f\|_{L^p(\mu)}\leq C(p)\|f\|_{L^2(\mu)},
    \end{equation}
    where $C(p)$ depends only on $p$.
\end{corollary}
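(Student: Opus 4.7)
The plan is to apply Bourgain's $\Lambda_p$ theorem directly to the character system of the group $\mathbb{Z}_N^d$. Specifically, I will use the characters $\{\chi_m\}_{m \in \mathbb{Z}_N^d}$ with $\chi_m(x) = e^{2\pi i x \cdot m / N}$ as the orthogonal sequence required by the theorem. These are pairwise orthogonal with respect to the normalized counting measure $\mu$ (this is just the orthogonality relation underlying Plancherel), and each satisfies $\|\chi_m\|_\infty = 1$. Taking $n = N^d$, Bourgain's theorem then produces a generic (random) subset $S \subset \mathbb{Z}_N^d$ of size $\lceil N^{2d/p}\rceil$ such that, with high probability,
$$\Bigl\|\sum_{m \in S} a_m \chi_m\Bigr\|_{L^p(\mu)} \leq C(p)\Bigl(\sum_{m \in S}|a_m|^2\Bigr)^{1/2}$$
for every scalar sequence $(a_m)_{m \in S}$, where $C(p)$ depends only on $p$.

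Next I would specialize this bound to $f$ itself using the Fourier inversion formula. Since $\hat{f}$ is supported in $S$, inversion gives $f(x) = \sum_{m \in S} \hat{f}(m)\chi_m(x)$, so choosing $a_m = \hat{f}(m)$ yields
$$\|f\|_{L^p(\mu)} \leq C(p)\Bigl(\sum_{m \in S}|\hat{f}(m)|^2\Bigr)^{1/2}.$$
Plancherel, in the normalized form stated in the preliminaries, reads $\|f\|_{L^2(\mu)}^2 = \sum_{m \in \mathbb{Z}_N^d}|\hat{f}(m)|^2$, and because $\hat{f}$ vanishes off $S$ the sum reduces to $\sum_{m \in S}|\hat{f}(m)|^2$. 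Substituting delivers the claimed inequality $\|f\|_{L^p(\mu)} \leq C(p)\|f\|_{L^2(\mu)}$.

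There is no substantive obstacle in this argument; the corollary is essentially a translation of Bourgain's theorem to the character system of a finite cyclic group. The only point requiring a bit of care is normalization: to make the $\ell^2$ coefficient norm on the right side of Bourgain's inequality line up with $\|f\|_{L^2(\mu)}$ via Plancherel, one must consistently use the normalized counting measure $\mu$ on the spatial side and the unnormalized $\ell^2$ inner product on the Fourier side, which is exactly the convention adopted throughout the paper.
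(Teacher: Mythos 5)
Your proof is correct and is exactly the intended reading of the corollary as a direct specialization of Bourgain's theorem to the character system of $\mathbb{Z}_N^d$ (the paper states the corollary without spelling out this short argument). Your bookkeeping of normalizations is also right: with the paper's conventions, $\|f\|_{L^2(\mu)}^2 = \sum_m |\hat{f}(m)|^2$, so the $\ell^2$ coefficient norm in Bourgain's inequality is precisely $\|f\|_{L^2(\mu)}$.
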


Bourgain's result on $\Lambda_p-$set is particularly remarkable, as the constant $C(p)$ here is irrelevant to the size of the group $G$, and it only depends on $p$. Finding a tight upper bound for $C(p)$ is still difficult, and we present our attempt to find the constant using computer program in the last section. 

Variants of Bourgain's $\Lambda_p$ theorem has been made in recent years, which mathematicians generalized $L^p$ norms to Orlicz norms. 
In 2023, Ryou showed that one can generalize Bourgain's result to Orlicz spaces defined by Young functions $\Phi$, where the Matuszewska-Orlicz index $\alpha_\Phi^\infty$ is strictly greater than $2$. 
\begin{theorem}[Ryou's generalization of $\Lambda_\Phi-$theorem to Orlicz space \cite{Ryou_2022}]\label{thm: Ryou}
    Given a signal $f: \mathbb{Z}_N^d \to \mathbb{C}$, and a nice Young function $\Phi$ such that $\Phi \in \Delta_2$, and $\alpha_\Phi^\infty >2$. For a generic subset $S\subset \mathbb{Z}_N^d$ of size $(\Phi^{-1}(N^d))^2$, the following inequality holds
    \begin{equation*}
        \|f\|_{L^\Phi(\mu)}\leq C(\Phi) \|f\|_{L^2(\mu)},
    \end{equation*}
    where $C(\Phi)$ depends only on $\Phi$.
\end{theorem}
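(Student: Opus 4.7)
The plan is to follow Bourgain's probabilistic selector strategy, adapted to the Orlicz setting via duality and entropy/chaining. First I would set up the random model: let $\{\xi_m\}_{m \in \mathbb{Z}_N^d}$ be independent $\{0,1\}$-valued Bernoulli selectors with $\mathbb{E}[\xi_m] = \delta$, where $\delta = (\Phi^{-1}(N^d))^2 / N^d$, so that $\mathbb{E}|S_\omega| = (\Phi^{-1}(N^d))^2$ for $S_\omega = \{m : \xi_m(\omega) = 1\}$. A Chernoff bound gives $|S_\omega| \asymp (\Phi^{-1}(N^d))^2$ with high probability, so it suffices to show that the claimed Orlicz inequality holds on a set of $\omega$'s of probability arbitrarily close to $1$.

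Second, by the dual characterization of the Orlicz norm and Theorem \ref{holder},
\begin{equation*}
\|f\|_{L^{\Phi}(\mu)} \leq \sup \left\{ \frac{1}{N^d}\sum_{x} f(x)\overline{g(x)} \,:\, \|g\|_{L^{\Phi^\star}(\mu)} \leq 1 \right\}.
\end{equation*}
Since $\widehat{f}$ is supported on $S$, Parseval lets one replace $g$ by its projection onto functions with Fourier support in $S$, converting the problem into an operator-norm estimate for the random projection $P_S : L^{\Phi^\star}(\mu) \to L^2(\mu)$, which is to be bounded uniformly in $N$ by a constant depending only on $\Phi$.

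Third, I would run Bourgain's entropy method. Decompose any unit vector $g \in L^{\Phi^\star}(\mu)$ into dyadic level pieces $g = \sum_j g_j$, with $g_j$ supported on $\{2^{j-1} < |g| \leq 2^j\}$; the Luxemburg constraint forces tight control on the cardinalities of these level sets via $(\Phi^\star)^{-1}$. The operator norm then splits into contributions from each level, and the randomness in $S_\omega$ can be absorbed using Talagrand's concentration inequality together with a majorizing measure / generic chaining bound, applied to the random process $g \mapsto \sum_m \xi_m g(m) \widehat{\chi_m}(\cdot)$. The required covering numbers for the unit $L^{\Phi^\star}(\mu)$-ball in the metric induced by this process are estimated using the Hausdorff-Young-type bounds available in Orlicz spaces (Corollary \ref{orliczhausdorff}).

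The hypotheses $\Phi \in \Delta_2$ and $\alpha_\Phi^\infty > 2$ enter precisely to close this estimate: $\Delta_2$ makes the Orlicz and gauge norms equivalent and allows dyadic scaling of $g$ without loss, while $\alpha_\Phi^\infty > 2$ translates to the upper Matuszewska index of $\Phi^\star$ being strictly less than $2$, which in turn forces the dyadic sum over levels $j$ (and equivalently the entropy integral) to converge, yielding a bound independent of $N$. The main obstacle I expect is exactly the chaining step: in the classical $L^p$ setting one uses the explicit power structure of $L^{p'}$-balls to estimate covering numbers, whereas in the Orlicz setting one must track covering numbers through the growth of $(\Phi^\star)^{-1}$ at both $0$ and $\infty$ and verify, via the Matuszewska index hypothesis, that the resulting entropy integral produces only a $\Phi$-dependent constant rather than one that blows up with $N$.
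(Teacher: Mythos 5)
This theorem is cited from Ryou \cite{Ryou_2022}; the paper you are reading does not prove it, so there is no in-paper argument to compare against. That said, your sketch is a reasonable reconstruction of the strategy Ryou (following Bourgain and Talagrand) actually uses: Bernoulli selectors with density $\delta=(\Phi^{-1}(N^d))^2/N^d$, dualization of the $\Lambda_\Phi$ estimate into a restriction-type bound, dyadic level-set decomposition of the dual function, and control of the resulting selector process by Talagrand's concentration and chaining machinery, with the Matuszewska index condition $\alpha_\Phi^\infty>2$ (equivalently, the relevant index of $\Phi^\star$ being strictly below $2$) guaranteeing convergence of the entropy sum.

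However, be aware that what you have written is a roadmap, not a proof. The entire content of Bourgain's theorem and of Ryou's extension is concentrated in the step you describe in one sentence — bounding the supremum of the selector process over the dyadic decomposition. In the $L^p$ case this requires a delicate iteration (Bourgain's recursive ``large values'' decomposition, or Talagrand's majorizing-measure reformulation) together with a careful count of dyadic level sets; in the Orlicz case one must additionally track how $(\Phi^\star)^{-1}$ distorts those cardinality bounds at every scale. None of that is carried out. Two smaller points: the claim that $\Delta_2$ ``makes the Orlicz and gauge norms equivalent'' is off — those two norms are always equivalent up to a universal factor of $2$; the role of $\Delta_2$ here is to control the doubling behavior of $\Phi$ so the dyadic rescaling is lossless. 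And the covering-number estimates in Bourgain's argument come from combinatorial counting of level sets, not from a Hausdorff–Young inequality, so Corollary \ref{orliczhausdorff} is not the right tool for that step. As a plan your proposal points in the right direction, but it leaves all the hard work undone.
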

For a nice Young functions $\Phi$ with $\alpha_\Phi^\infty =2$, generalizing the result is especially challenging. We present results by Limonova and Burstein, who proved a variant of Bourgain's $\Lambda_p$ theorem for this specific class of functions.
\begin{theorem}[Limonova's generalization \cite{MR4720895}]\label{thm: Limonova}
    Let $\Phi_\alpha$ be the following nice Young function
    \begin{equation*}
        \Phi_\alpha (x) = x^2\frac{\ln^\alpha (e+|x|)}{\ln^\alpha \left(e+\frac{1}{|x|}\right)},
    \end{equation*}
    where $\alpha >\frac{3}{2}$. Let $\rho>2$, and $f:\mathbb{Z}_N^d \to \mathbb{C}$. Then for a generic subset $S\subset \mathbb{Z}_N^d$ of size 
    \begin{equation*}
        \frac{N^d}{\log_2 ^\rho(N^d+3)},
    \end{equation*}
    if $\hat{f}$ is supported in $S$, then the following inequality holds with probability greater than $1-C(\rho)N^{-9d}$:
    \begin{equation}\label{ineq: Limonova}
        \|f\|_{L^{\Phi_\alpha}(\mu)} \leq C(\alpha, \rho)\log_2^{\beta +\frac{1}{2}}(N^d+3) \|f\|_{L^2(\mu)},
    \end{equation}
    where $C(\rho)$ only depends on $\rho$, $C(\alpha, \rho)$ depends on $\alpha$ and $\rho$, and $\beta= \max\{\frac{\alpha}{2}-\frac{\rho}{4}, \frac{1}{4}\}.$ 
\end{theorem}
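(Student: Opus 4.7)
The plan is to follow Bourgain's probabilistic selector approach, adapted to the Orlicz setting at the critical Matuszewska-Orlicz index $\alpha_{\Phi_\alpha}^\infty = 2$. I would first encode the random set $S$ via independent Bernoulli selectors $\{\xi_i\}_{i\in\mathbb{Z}_N^d}$ with $\mathbb{E}\xi_i = \log_2^{-\rho}(N^d+3)$, so that $S=\{i:\xi_i(\omega)=1\}$ has the prescribed expected cardinality. Using that $\Phi_\alpha(x) \asymp x^2\log^\alpha(e+x)$ for large $x$, a standard duality reduction shows that inequality \eqref{ineq: Limonova} is equivalent to a uniform bound on the operator norm of the random map $T_\xi:(a_i)\mapsto \sum_i \xi_i a_i \chi_i$ from $\ell^2$ into $L^{\Phi_\alpha}(\mu)$, taken over $\|a\|_{\ell^2}\leq 1$, with failure probability at most $C(\rho)N^{-9d}$.

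To control this operator norm, I would linearise the Luxemburg norm via its defining inequality $\frac{1}{N^d}\sum_x\Phi_\alpha(|T_\xi a(x)|/k)\leq 1$ and then handle the resulting supremum over the unit $\ell^2$-ball by a Dudley-type chaining argument. The moments of $T_\xi a(x)$ for fixed $x$ are estimated via Bernstein--Khintchine inequalities applied to the independent mean-zero summands $(\xi_i-\mathbb{E}\xi_i)a_i\chi_i(x)$, and the weighted $L^2$ structure of $\Phi_\alpha$ lets one sum these moments against the log weight. The high-probability bound $1-C(\rho)N^{-9d}$ then comes from iterating an exponential moment (Bennett--Bernstein) estimate against an $\epsilon$-net of the unit $\ell^2$-ball at scale $\epsilon = N^{-O(d)}$; the polynomial decay in $N$ is absorbed by choosing the net constant sufficiently large. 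The log exponent $\beta+\tfrac12 = \max\{\tfrac{\alpha}{2}-\tfrac{\rho}{4},\tfrac14\}+\tfrac12$ appears as the sum of a $\log^{1/2}$ loss from the chaining and a $\log^\beta$ term balancing the weight $\log^{\alpha/2}$ coming from $\Phi_\alpha$ against the gain $\log^{\rho/4}$ coming from the small expected size $\mathbb{E}|S|\asymp N^d/\log^\rho(N^d+3)$.

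The main obstacle is precisely this criticality at $\alpha_{\Phi_\alpha}^\infty=2$. Ryou's proof of Theorem \ref{thm: Ryou} uses crucially that $\alpha_\Phi^\infty>2$, so that a polynomial-in-$\Phi^{-1}(N^d)$ loss can be absorbed; at the critical index this slack disappears and every appearance of the logarithm must be tracked through the chaining. The dichotomy in $\beta$ reflects whether $\rho$ is large or small compared to $2\alpha-1$, and verifying that the resulting constant $C(\alpha,\rho)$ is independent of $N$ requires a delicate splitting of the Dudley entropy integral into the two regimes together with a careful choice of chaining scales. Confirming that the $\alpha>\tfrac32$ threshold is exactly what makes the entropy integral convergent should fall out of this case analysis.
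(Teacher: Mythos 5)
This theorem is not proved in the paper at all; it is imported verbatim from Limonova's work \cite{MR4720895} and used as a black box, so there is no ``paper's own proof'' to compare against. Your sketch therefore has to be judged on its own terms as an outline of how Limonova's argument goes.

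As an outline it is plausible at the level of strategy: Bernoulli selectors with mean $\log_2^{-\rho}(N^d+3)$, a duality reduction of the Luxemburg-norm bound to a uniform operator-norm estimate for the random synthesis map, and an entropy/chaining argument with Bernstein-type tail bounds iterated over an $\epsilon$-net are indeed the standard ingredients in Bourgain-style $\Lambda_p$ proofs and their Orlicz-space refinements. You also correctly identify the genuine difficulty: the Matuszewska--Orlicz index of $\Phi_\alpha$ is exactly $2$, so Ryou's Theorem~\ref{thm: Ryou} does not apply, and every logarithmic factor has to be tracked rather than absorbed. However, the proposal is a program rather than a proof. The crucial quantitative claims --- that the chaining loss is exactly $\log^{1/2}$, that the balancing of the weight $\log^{\alpha/2}$ against the gain $\log^{\rho/4}$ produces exactly the exponent $\beta = \max\{\alpha/2 - \rho/4,\, 1/4\}$, and that the threshold $\alpha > 3/2$ is what makes the Dudley integral converge --- are asserted rather than derived, and they are precisely the places where Limonova's argument is delicate. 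Until those computations are carried out, one cannot conclude that the approach yields the stated constants, the stated exponent, or the stated failure probability $C(\rho)N^{-9d}$ (in particular, the choice of net scale $\epsilon = N^{-O(d)}$ and the absorption of the union bound are not justified). If you intend to reconstruct the result rather than cite it, the entropy-integral estimate and the two-regime case analysis for $\beta$ need to be written out explicitly; otherwise, simply cite \cite{MR4720895} as the paper does.
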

\begin{theorem}[Burstein's generalization \cite{burstein2025lambdap}]
Let $\alpha>0,$ and $\Phi_\alpha$ be the following nice Young function, 
\begin{equation*}
    \Phi_\alpha(x) =  \begin{cases}
      x^2\log^\alpha(x) & \text{for $x\geq x_0$}\\
      c(\alpha)x^2 & \text{otherwise}
    \end{cases}
\end{equation*}
where $x_0$ and $c(\alpha)$ are constants depending on $\alpha$ chosen so that $\Phi_\alpha$ is a nice Young function.
Then for a generic subset $S\subset \mathbb{Z}_N^d$ of size
\begin{equation*}
    \frac{N^d}{e\log^{\alpha+1}(N^d)}
\end{equation*}
if $\hat{f}$ is supported in $S$, then the following inequality holds with probability greater than $\frac{1}{4}$:
\begin{equation}\label{ineq: Burstein}
    \|f\|_{L^{\Phi_\alpha}(\mu)} \leq C(\alpha)\log^\alpha\left(\log(N^d)\right) \|f\|_{L^2(\mu)},
\end{equation}
where $C(\alpha)$ is a constant depending only on $\alpha$. 
\end{theorem}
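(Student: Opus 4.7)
The plan is to follow the selector method introduced by Bourgain and adapted to Orlicz spaces by Ryou and Limonova. First, I would introduce independent Bernoulli selectors $\{\xi_i\}_{i\in\mathbb{Z}_N^d}$ with expectation $\delta=1/(e\log^{\alpha+1}(N^d))$, and set $S_\omega=\{i:\xi_i(\omega)=1\}$, so that $\mathbb{E}|S_\omega|=N^d\delta$ matches the claimed size; a standard Chernoff bound then shows that $|S_\omega|$ concentrates around this value. The target inequality can be reformulated as the statement that, with probability at least $1/4$, the identity operator from $L^2_{S_\omega}(\mathbb{Z}_N^d)$ (functions with Fourier support in $S_\omega$) into $L^{\Phi_\alpha}(\mu)$ has norm at most $C(\alpha)\log^\alpha(\log N^d)$.

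Next, I would pass to a dual formulation. Using the Hölder-type inequality (Theorem \ref{holder}) with the complementary function $\Phi_\alpha^\star$, bounding $\|f\|_{L^{\Phi_\alpha}(\mu)}$ reduces to controlling $\mathbb{E}_\xi\sup_{g\in B}\sum_i\xi_i|\hat g(i)|^2$ where $B$ is a suitable ball built from $\Phi_\alpha^\star$ and characters. Now apply standard symmetrization to replace the Bernoulli selectors by Rademacher variables $\{\varepsilon_i\}$, reducing the problem to controlling a Rademacher process
\begin{equation*}
\mathbb{E}_\varepsilon\sup_{g\in B}\Bigl|\sum_i\varepsilon_i\,a_i(g)\Bigr|,
\end{equation*}
where $a_i(g)$ are coefficients derived from $\hat g(i)$ restricted to the random set. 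The main analytic input is then Dudley's chaining inequality applied with the intrinsic metric of this process, which in turn requires sharp estimates on the metric entropy of $B$ in a mixed $L^2/L^\infty$ sense. The factorization argument of Pisier, combined with the $\Delta_2$ condition on $\Phi_\alpha$, lets one discretize $B$ at dyadic scales, using that $\Phi_\alpha(x)/x^2=\log^\alpha x$ grows slowly enough that a multi-level chaining converges.

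I expect the hard part to be the borderline Matuszewska-Orlicz index $\alpha_{\Phi_\alpha}^\infty=2$. Unlike Ryou's setting ($\alpha_\Phi^\infty>2$), here the entropy integral diverges at the ``top'' scale if one uses a naive single-step chaining, which is precisely why the density must be deflated by $\log^{\alpha+1}(N^d)$ instead of being a fractional power of $N^d$. The key technical step is to introduce an iterated chaining: group characters into $\log(N^d)$ scales, apply a Rademacher tail estimate at each scale (Bernstein or Talagrand-type), and sum the resulting contributions. The logarithmic loss $\log^\alpha(\log N^d)$ in the conclusion arises naturally from the iteration, as at each of the $O(\log\log N^d)$ effective chaining levels one pays a constant factor proportional to $\log^{\alpha/\log\log N^d}$ of the number of surviving characters, and compounding these yields exactly $\log^\alpha(\log N^d)$.

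Finally, I would track constants carefully through the symmetrization, chaining, and conversion back from the Rademacher bound to the expected supremum, then apply Markov's inequality to the resulting probabilistic estimate. The $1/4$ probability in the conclusion follows from choosing the Chernoff bound thresholds and the Markov threshold so that the union bound over ``bad'' events (size of $S_\omega$ too large, supremum exceeding its expectation by too much) is less than $3/4$. The $\Delta_2$ hypothesis on $\Phi_\alpha$ is used throughout to pass between gauge and Orlicz norms and to justify the factorization estimates in the chaining step.
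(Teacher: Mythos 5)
This theorem is cited from Burstein's preprint \cite{burstein2025lambdap} and is \emph{not} proved in the present paper, so there is no ``paper's own proof'' to compare against. Your sketch is a reasonable high-level recollection of the Bourgain--Talagrand machinery (Bernoulli selectors, symmetrization to a Rademacher process, Dudley-type chaining against a metric built from the dual Young function, and a Markov step to get a positive-probability conclusion), and you correctly flag the essential obstruction: $\alpha_{\Phi_\alpha}^\infty = 2$ is the borderline case where the entropy integral fails to converge naively, forcing the density to be deflated by a logarithmic factor rather than by a power of $N^d$.

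That said, the quantitative heuristic you give for the $\log^\alpha(\log N^d)$ loss does not hold up. You claim that across $O(\log\log N^d)$ chaining levels one accumulates factors of size $\log^{\alpha/\log\log N^d}$, and that compounding these yields $\log^\alpha(\log N^d)$. But if each level contributes a factor of order $\bigl(\log N^d\bigr)^{\alpha/\log\log N^d}$ and there are $\Theta(\log\log N^d)$ of them, the product is $\bigl(\log N^d\bigr)^{\Theta(\alpha)}$, i.e.\ polylogarithmic in $N^d$, which is exponentially larger than the stated $\log^\alpha(\log N^d)$. The doubly logarithmic constant cannot arise from a na\"ive multiplicative compounding across chaining levels; it has to come from a more delicate mechanism (for instance, comparing $L^{\Phi_\alpha}$ to $L^p$ for an exponent $p = 2 + O(1/\log\log N^d)$ chosen depending on $N$ and tracking how the $\Lambda_p$ constant and the $L^p$--to--$L^{\Phi_\alpha}$ comparison factor trade off at that scale, or from a weighted entropy decomposition in which only the last few scales carry logarithmic weight). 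You also do not explain how the $\Delta_2$ condition alone controls the metric entropy of the dual ball $B$ --- for a function with $\alpha_{\Phi_\alpha}^\infty = 2$ the ball is essentially an $\ell^2$ ball at large scales, so the relevant covering numbers need a different argument than in Ryou's supercritical case, and this is where the real work lies. As it stands the sketch identifies the right framework but not the step that actually produces the claimed constant.
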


With these new results on $\Lambda_\Phi-$sets, it naturally leads us to think of what uncertainty principle can we get, under the assumption that the Fourier support of a given function is a $\Lambda_\Phi-$set.
\begin{theorem}[An Uncertainty Principle via $\Lambda_\Phi-$Theorem]\label{theorem: An Uncertainty Principle via Lambda-Phi}
    Let $f: \mathbb{Z}_N^d \to \mathbb{C}$, where $supp(f) = E$. Suppose \begin{equation*}
        \|f\|_{L^{\Phi}(\mu)}\leq K \|f\|_{L^2(\mu)},
    \end{equation*}
    for some constant $K$. Then we have 
    \begin{equation}
        \frac{|E|}{N^d}
    \geq K^{-2}\left(\Phi^{-1}\left(\frac{N^{2d}}{|E|^2}\right)\right)^{-1}.
    \end{equation}
\end{theorem}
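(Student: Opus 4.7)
The plan is to upper bound $\|f\|_{L^2(\mu)}^2$ in terms of $\|f\|_{L^\Phi(\mu)}^2$ using the support constraint $\mathrm{supp}(f)=E$, and then close the loop with the hypothesis $\|f\|_{L^\Phi(\mu)}\le K\|f\|_{L^2(\mu)}$ to extract a lower bound on $|E|$. Since $f$ is supported on $E$, the starting identity
\[
    \|f\|_{L^2(\mu)}^2 \;=\; \|f^2 \cdot 1_E\|_{L^1(\mu)}
\]
is the step where the size of $E$ enters the estimate through the indicator $1_E$.

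Next, I would introduce the auxiliary Young function $\Theta(y) := \Phi(\sqrt{y})$; assuming $\Phi$ grows at least like $x^2$ (so that $\Theta$ is convex), $\Theta$ is a valid nice Young function, a change of variables in the Luxemburg norm gives $\|f^2\|_{L^\Theta(\mu)}=\|f\|_{L^\Phi(\mu)}^2$, and the inverse satisfies $\Theta^{-1}(z)=(\Phi^{-1}(z))^2$. Applying H\"older's inequality in Orlicz spaces (Theorem \ref{holder}) to the complementary pair $(\Theta,\Theta^\star)$, with $1_E$ in the Orlicz norm and $f^2$ in the Luxemburg norm, and evaluating the indicator norm via Lemma \ref{lem: Orlicz norm of an indicator function} as
\[
    \|1_E\|_{L^{(\Theta^\star)}(\mu)} \;=\; \frac{|E|}{N^d}\Theta^{-1}\!\left(\frac{N^d}{|E|}\right) \;=\; \frac{|E|}{N^d}\left(\Phi^{-1}\!\left(\frac{N^d}{|E|}\right)\right)^{\!2},
\]
produces the bound
\[
    \|f\|_{L^2(\mu)}^2 \;\le\; \frac{|E|}{N^d}\left(\Phi^{-1}\!\left(\frac{N^d}{|E|}\right)\right)^{\!2}\|f\|_{L^\Phi(\mu)}^2.
\]

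Substituting the hypothesis $\|f\|_{L^\Phi(\mu)}^2 \le K^2 \|f\|_{L^2(\mu)}^2$ cancels the $\|f\|_{L^2(\mu)}^2$ factor and rearranges to $|E|/N^d \ge K^{-2}(\Phi^{-1}(N^d/|E|))^{-2}$. To match the form of the statement one then compares $(\Phi^{-1}(N^d/|E|))^2$ with $\Phi^{-1}(N^{2d}/|E|^2)$: for Young functions of regular growth (e.g.\ satisfying $\Delta_2$ and $\nabla_2$, as is the case for all the examples relevant to the $\Lambda_\Phi$-theorems of the previous subsection) these two quantities are comparable up to a constant depending only on $\Phi$, which can be absorbed into the ambient constant.

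The main obstacle will be verifying that $\Theta(y)=\Phi(\sqrt{y})$ is indeed a nice Young function, which requires the (natural, given the $\Lambda_\Phi$ context) growth assumption $x^2 \prec \Phi$; the secondary, essentially routine, check is the comparability of $(\Phi^{-1}(z))^2$ with $\Phi^{-1}(z^2)$ for $z = N^d/|E|\ge 1$, which is controlled by the Matuszewska--Orlicz indices $\alpha_\Phi^\infty,\beta_\Phi^\infty$ already introduced in the preliminaries.
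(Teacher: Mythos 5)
Your proposal follows the paper's proof almost verbatim: the paper also writes $\|f\|_{L^2(\mu)}^2 = \|f^2\|_{L^1(\mu)}$, applies H\"older with the auxiliary Young function $\Psi(y)=\Phi(\sqrt{y})$ (your $\Theta$) against $1_E$, uses Lemma \ref{lem: Orlicz norm of an indicator function} to evaluate $\|1_E\|_{L^{(\Psi^\star)}(\mu)}$, and then divides out $\|f\|_{L^2(\mu)}^2$ using the $\Lambda_\Phi$ hypothesis. So the decomposition, the key lemma, and the final rearrangement all match.

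The one place you go beyond the paper is worth highlighting. The H\"older step genuinely yields
\[
\frac{|E|}{N^d}\,\Psi^{-1}\!\left(\frac{N^d}{|E|}\right)\geq K^{-2},\qquad\text{i.e.}\qquad \frac{|E|}{N^d}\geq K^{-2}\left(\Phi^{-1}\!\left(\frac{N^d}{|E|}\right)\right)^{-2},
\]
since $\Psi^{-1}(y)=(\Phi^{-1}(y))^2$. The paper passes from this to the stated $\frac{|E|}{N^d}\geq K^{-2}(\Phi^{-1}(N^{2d}/|E|^2))^{-1}$ with no comment, which is an identity only when $\Phi$ is a pure power; for a general nice Young function the two quantities $(\Phi^{-1}(z))^2$ and $\Phi^{-1}(z^2)$ need not coincide, and the inequality can go either way. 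You flag this and note it holds up to a multiplicative constant under $\Delta_2$/$\nabla_2$-type regularity; that qualifier is needed, and the sharper and cleaner form of the conclusion is the one with $\left(\Phi^{-1}(N^d/|E|)\right)^{-2}$. You also correctly observe that $x^2\prec\Phi$ is implicitly required for $\Theta(y)=\Phi(\sqrt{y})$ to be convex (hence a Young function); this is consistent with the $\Lambda_\Phi$ context but is not stated as a hypothesis in the theorem, and your proof makes that assumption explicit.
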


We can apply theorem \ref{theorem: An Uncertainty Principle via Lambda-Phi} to the known variants of $\Lambda_p-$theorem, and get the following examples for different Orlicz norms. 
\begin{example}[$L^p$]
    Let $\Phi(x)=x^p$ for $p>2$, then it is a nice Young function and $\alpha_{\Phi}^{\infty} =p >2$. 
    Given function $f:\mathbb{Z}_N^d \to \mathbb{C}$ with support $E$.
    We have
\begin{equation*}
    \frac{|E|}{N^d}
    \geq C(p)^{-2}\left(\frac{N^{2d}}{|E|^2}\right)^{-\frac{1}{p}}. 
\end{equation*}
After simplifying this equation, we have 
\begin{equation*}
    |E| \geq \frac{N^d}{C(p)^{\frac{1}{\frac{1}{2}-\frac{1}{p}}}}.
\end{equation*}
\end{example}

\begin{example}[$\Phi(x) =x^p\log^\alpha(x+1)$]
    Let $\Phi(x) =x^p\log^\alpha(x+1)$, for some $p >2, \alpha >0$,
    then it is a nice young function, one can check $\alpha_{\Phi}^{\infty} =p >2$.
    We also have 
\begin{equation*}
    \Phi^{-1}(x) \approx x^{1/p}\log^{-\alpha/p}(x).
\end{equation*}
    Then we have 
\begin{equation*}
    \frac{|E|}{N^d}
    \geq C(\Phi)^{-2}\left(\frac{N^{2d}}{|E|^2}\right)^{\frac{1}{p}}\log^{-\alpha/p}\left(\frac{N^{2d}}{|E|^2}\right).
\end{equation*}
\end{example}


\begin{corollary}
[An Uncertainty Principle using Burstein's Result\label{Burstein's Corollary}]
Suppose $f$ is a function satisfying inequality \ref{ineq: Burstein} for some $\alpha>0$, and $supp(f)=E$, then 
$$\frac{|E|}{N^d}\geq \exp\left(-\frac{C(\alpha)^{4/\alpha}}{2}\log^4\left(\log(N^d)\right)\right).$$
\end{corollary}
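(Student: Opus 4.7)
The plan is to apply Theorem \ref{theorem: An Uncertainty Principle via Lambda-Phi} directly with $\Phi = \Phi_\alpha$ (the Young function from Burstein's theorem) and $K = C(\alpha)\log^\alpha(\log(N^d))$, which is the constant appearing on the right-hand side of inequality \ref{ineq: Burstein}. This immediately yields
\begin{equation*}
    \frac{|E|}{N^d} \;\geq\; \frac{1}{C(\alpha)^{2}\log^{2\alpha}(\log(N^d))}\cdot \left(\Phi_\alpha^{-1}\!\left(\frac{N^{2d}}{|E|^2}\right)\right)^{-1},
\end{equation*}
so the whole problem reduces to estimating $\Phi_\alpha^{-1}$ asymptotically and then solving the resulting implicit inequality for $|E|/N^d$.

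Next I would compute the asymptotic behavior of $\Phi_\alpha^{-1}$. For $x$ large, $\Phi_\alpha(x) = x^2\log^\alpha(x)$, so if $y = x^2\log^\alpha(x)$ then $\log y = 2\log x + \alpha\log\log x \sim 2\log x$, which gives $x \sim \sqrt{y}\,(\log y/2)^{-\alpha/2}$, and hence $\Phi_\alpha^{-1}(y) \lesssim \sqrt{y}\,\log^{-\alpha/2}(y)$ up to a constant depending only on $\alpha$. Substituting $y = N^{2d}/|E|^2$, this gives
\begin{equation*}
    \left(\Phi_\alpha^{-1}\!\left(\frac{N^{2d}}{|E|^2}\right)\right)^{-1} \;\gtrsim\; \frac{|E|}{N^d}\,\log^{\alpha/2}\!\left(\frac{N^{2d}}{|E|^2}\right),
\end{equation*}
so the uncertainty bound becomes
\begin{equation*}
    \frac{|E|}{N^d} \;\geq\; \frac{1}{C(\alpha)^{2}\log^{2\alpha}(\log(N^d))}\cdot \frac{|E|}{N^d}\,\log^{\alpha/2}\!\left(\frac{N^{2d}}{|E|^2}\right),
\end{equation*}
where I am absorbing the implicit $\alpha$-dependent constant into $C(\alpha)$.

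The final step is algebraic. Cancelling $|E|/N^d$ from both sides and isolating the logarithm yields
\begin{equation*}
    \log^{\alpha/2}\!\left(\frac{N^{2d}}{|E|^2}\right) \;\leq\; C(\alpha)^{2}\log^{2\alpha}(\log(N^d)),
\end{equation*}
and raising both sides to the power $2/\alpha$ converts the left-hand side to $\log(N^{2d}/|E|^2)$ and the right-hand side to $C(\alpha)^{4/\alpha}\log^{4}(\log(N^d))$. Exponentiating and rearranging then gives exactly the claimed inequality $|E|/N^d \geq \exp(-\tfrac{1}{2}C(\alpha)^{4/\alpha}\log^{4}(\log(N^d)))$.

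The only subtle point is the asymptotic inversion of $\Phi_\alpha$: strictly one should justify that the approximation $\Phi_\alpha^{-1}(y)\lesssim \sqrt{y}\log^{-\alpha/2}(y)$ is valid for the relevant range $y = N^{2d}/|E|^2$, which is large whenever the bound is nontrivial, and keep track of whether the $\alpha$-dependent multiplicative constants can be absorbed into the final $C(\alpha)^{4/\alpha}$. This is the only step that requires care; the rest is a direct substitution into the previously established uncertainty principle.
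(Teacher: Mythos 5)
Your proposal is correct and follows essentially the same route as the paper's own proof: apply Theorem~\ref{theorem: An Uncertainty Principle via Lambda-Phi} with $K=C(\alpha)\log^\alpha(\log(N^d))$, substitute the asymptotic $\Phi_\alpha^{-1}(y)\approx\sqrt{y}\log^{-\alpha/2}(y)$, cancel $|E|/N^d$, raise to the power $2/\alpha$, and exponentiate. The caveat you flag about absorbing the $\alpha$-dependent constants hidden in the $\approx$ is a real (minor) looseness that the paper itself glosses over in exactly the same way.
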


\begin{theorem}[From $\Lambda_\Phi$ to $(\Phi^*,2)-$ restriction]\label{thm: lambda-phi to restriction}
Given a nice Young function $\Phi$ such that $\Phi\succ x^2.$ Let $S$ be the support of $\hat{f}$ for some $f:\mathbb{Z}_N^d\to \mathbb{C}$, such that  
\begin{equation}\label{ineq: Lphi, L^2}
\|f\|_{L^\Phi(\mu)}\leq C(\Phi)\|f\|_{L^2(\mu)}.
\end{equation}
then 
\begin{equation}
    \left(\frac{1}{|S|}\sum_{m\in S}|\hat{f}(m)|^2\right)^{\frac{1}{2}}
       \leq K(\Phi)(\Phi^{*})^{-1}\left(N^{-d}\right)|S|^{-\frac{1}{2}}\|f\|_{L^{\Phi^*}(\mathbb{Z}_N^d)},
\end{equation}
where $K(\Phi)= C(\Phi) D'(\Phi^{*})$, for $C(\Phi)$ as in inequality \ref{ineq: Lphi, L^2} and $D'(\Phi^*)$ in Corollary \ref{Coro: Bak's interpolation on normalized Orlicz norm} for $\Phi^*$ the complementary Young function of $\Phi$.  
\end{theorem}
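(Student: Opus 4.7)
The strategy is to use Plancherel to reduce the target $\ell^2$ restriction bound on $\hat f$ to an estimate on $\|f\|_{L^2(\mu)}$, and then exploit the hypothesis $\|f\|_{L^\Phi(\mu)}\leq C(\Phi)\|f\|_{L^2(\mu)}$ by dualizing it through a Hölder pairing $|f|^2=|f|\cdot|f|$ with the complementary pair $(\Phi,\Phi^*)$.

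First, since $\hat f$ is supported in $S$, Plancherel gives
\begin{equation*}
\sum_{m\in S}|\hat f(m)|^2 \;=\; \sum_{m\in\mathbb{Z}_N^d}|\hat f(m)|^2 \;=\; N^{-d}\sum_{x\in\mathbb{Z}_N^d}|f(x)|^2 \;=\; \|f\|_{L^2(\mu)}^2,
\end{equation*}
so that the left-hand side of the conclusion equals $|S|^{-1/2}\|f\|_{L^2(\mu)}$. It therefore suffices to prove that
\begin{equation*}
\|f\|_{L^2(\mu)} \;\lesssim\; C(\Phi)\,D'(\Phi^{*})\,(\Phi^{*})^{-1}\!\left(N^{-d}\right)\,\|f\|_{L^{\Phi^{*}}(\mathbb{Z}_N^d)}.
\end{equation*}

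Next, writing $|f|^2=|f|\cdot|f|$ and invoking the improved Hölder inequality of Theorem \ref{holder} with the complementary pair $(\Phi,\Phi^*)$ on $\mu=\mu_{\mathbb{Z}_N^d}$ yields
\begin{equation*}
\|f\|_{L^2(\mu)}^2 \;=\; \|f\cdot f\|_{L^1(\mu)} \;\leq\; \|f\|_{L^{(\Phi)}(\mu)}\,\|f\|_{L^{\Phi^{*}}(\mu)}.
\end{equation*}
Using the equivalence $\|f\|_{L^{(\Phi)}(\mu)}\approx\|f\|_{L^{\Phi}(\mu)}$ together with the $\Lambda_\Phi$ hypothesis bounds the first factor by (a constant multiple of) $C(\Phi)\|f\|_{L^2(\mu)}$. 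Assuming $f\not\equiv 0$ (otherwise the claim is trivial), dividing through by $\|f\|_{L^2(\mu)}$ gives $\|f\|_{L^2(\mu)}\lesssim C(\Phi)\,\|f\|_{L^{\Phi^{*}}(\mu)}$.

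Finally, to convert the normalized Orlicz norm into the counting-measure norm that appears on the right of the claim, apply the second inequality of Corollary \ref{Coro: Bak's interpolation on normalized Orlicz norm} with $\Phi$ replaced by $\Phi^*$ and $E=\mathbb{Z}_N^d$:
\begin{equation*}
\|f\|_{L^{\Phi^{*}}(\mu)} \;\leq\; D'(\Phi^{*})\,(\Phi^{*})^{-1}\!\left(N^{-d}\right)\,\|f\|_{L^{\Phi^{*}}(\mathbb{Z}_N^d)}.
\end{equation*}
Chaining the three estimates and setting $K(\Phi)=C(\Phi)\,D'(\Phi^{*})$ delivers the desired restriction-type bound. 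The main subtlety is bookkeeping rather than analysis: one must track the distinction between the gauge norm $\|\cdot\|_{L^{\Phi}}$ and the Orlicz norm $\|\cdot\|_{L^{(\Phi)}}$ so that Hölder is applied with constant one, and one must check that $\Phi^{*}$ inherits the Bak-type hypotheses required to invoke Corollary \ref{Coro: Bak's interpolation on normalized Orlicz norm} (which is where $\Phi\succ x^2$, equivalently $\Phi^{*}\prec x^2$, is used).
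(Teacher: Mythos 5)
Your argument is correct and takes essentially the same route as the paper: both proofs hinge on H\"older applied to a self-pairing, the $\Lambda_\Phi$ hypothesis, and Corollary~\ref{Coro: Bak's interpolation on normalized Orlicz norm} to pass from the normalized to the counting-measure Orlicz norm. You invoke Plancherel at the outset to identify the left side with $|S|^{-1/2}\lVert f\rVert_{L^2(\mu)}$, whereas the paper writes $\sum_{m\in S}|\hat f(m)|^2=\sum_x f(x)\widehat{gS}(x)$ with $g=\overline{\hat f}$ and applies the $\Lambda_\Phi$ bound to $\widehat{gS}$; since $\widehat{gS}=N^{-d}\overline{f}$, this is the same pairing and the two arguments coincide, and the gauge-versus-Orlicz-norm constant you flag is a bookkeeping issue present in the paper's own version as well.
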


\begin{example}
    When we have $\Phi(x)=x^p,$ for some $p>2$, we get 
    \begin{equation*}
        \|\hat{f}\|_{L^2(\mu_S)}\leq C(p)D'(p')N^{-d}\|f\|_{L^{p'}(\mathbb{Z}_N^d)}=K(p)N^{-d}\|f\|_{L^{p'}(\mathbb{Z}_N^d)}, 
    \end{equation*}
    for some constant $K(p)$ depends only on $p$.
    It means that $S$ satisfies a $(p',2)-$restriction estimate. 
\end{example}
\begin{example}
    Let $\Phi(x) =x^p\log^{\alpha}(x+1)$, for some $p>2, \alpha >0$,
    then it is a nice Young function, one can check that $\alpha_{\Phi}^{\infty} =p >2$. We get
    \begin{equation*}
    (\Phi^\star)^{-1}(x)= x^{1/p'}\log^{\alpha/p}(x+1).
    \end{equation*}
    So 
    \begin{equation*}
    \|\hat{f}\|_{L^2(\mu_S)}\leq K(\Phi)N^{-d/p'}\log^{\alpha/p}(N^{-d}+1)|S|^{-1/2}\|f\|_{L^{\Phi^\star}(\mathbb{Z}_N^d)}.
    \end{equation*}
    From Ryou's generalization \ref{thm: Ryou}, we know that $|S|\approx \left(\Phi^{-1}(N^d)\right)^2$, so 
    \begin{align*}
    \|\hat{f}\|_{L^2(\mu_S)}&\leq K(\Phi)N^{-d/p'}\log^{\alpha/p}\left(N^{-d}+1\right)\left(\Phi^{-1}(N^d)\right)^{-1}\|f\|_{L^{\Phi^\star}(\mathbb{Z}_N^d)}\\
    &=K(\Phi)N^{-d/p'}\log^{\alpha/p}\left(N^{-d}+1\right)N^{-d/p}\log^{\alpha/p}(N^d+1)\|f\|_{L^{\Phi^\star}(\mathbb{Z}_N^d)}\\
    &\leq K(\Phi)N^{-d}\|f\|_{L^{\Phi^\star}(\mathbb{Z}_N^d)}.
    \end{align*}
\end{example}
\section{Exact recovery}
In this section, we shift our focus from the uncertainty principle to the problem of exact recovery. The central question we address is: Under what conditions can a signal be uniquely and exactly recovered? To make this recovery problem tractable, we first impose a structural assumption on the set $S$, which is the set of missing frequencies. Specifically, we assume that $S$ satisfies $(\Phi,\Psi)-$ restriction condition.

\begin{theorem}[Exact Recovery via Restriction Estimation]
\label{thm: exact recovery via restriction estimation}
Let $f: \mathbb{Z}_N^d \to \mathbb{C}$ be a signal supported in $E \subset \mathbb{Z}_N^d$, and $\hat{f}$ is unobserved in $S\subset \mathbb{Z}_N^d$. Suppose $S$ satisfies a $(\Phi, \Psi)-$restriction estimation for two nice Young functions $\Phi, \Psi$ such that $x \prec \Phi \prec \Psi $. If \begin{equation}
    |E|\leq \frac{N^{4d}\left(\Phi^{-1}\left(\frac{|S|}{N^{d}}\right)\right)^2}{4C(\Phi,\Psi)^2|S|^3}
\end{equation}
holds, then we can recover $f$ uniquely. 
\end{theorem}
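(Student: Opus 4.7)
The plan is to argue by contrapositive in the classical Donoho--Stark style, adapted to the Orlicz restriction setting. Suppose two distinct signals $f_1,f_2$ are both supported in $E$ and share every Fourier coefficient outside $S$. Setting $g := f_1-f_2$, the function $g$ is nonzero, supported in $E$, and $\widehat{g}$ is supported in $S$. So it suffices to show that the hypothesized upper bound on $|E|$, combined with the $(\Phi,\Psi)$-restriction estimate on $S$, forces $g\equiv 0$, which is precisely the uncertainty-type statement underlying Theorem~\ref{thm: UP via Restriction Estimation I} and Theorem~\ref{thm: UP via Restriction Estimation II}.

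To produce the contradiction I would chain four ingredients already developed in Section~3. First, by Plancherel,
\begin{equation*}
\|g\|_{L^2(\mathbb{Z}_N^d)}^2 \;=\; N^d|S|\,\|\widehat{g}\|_{L^2(\mu_S)}^2.
\end{equation*}
Second, pass $L^2(\mu_S) \hookrightarrow L^{\Psi}(\mu_S)$ up to constants using Orlicz H\"older (Theorem~\ref{Holder's theorem}) or Corollary~\ref{ptophi}, and then apply the $(\Phi,\Psi)$-restriction hypothesis to bound $\|\widehat{g}\|_{L^{\Psi}(\mu_S)}$ by $C(\Phi,\Psi) N^{-d}\|g\|_{L^{\Phi}(\mathbb{Z}_N^d)}$. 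Third, use Lemma~\ref{lem: Comparing Orlicz norm and $L^1-$norm} to replace $\|g\|_{L^{\Phi}(\mathbb{Z}_N^d)}$ by an explicit multiple of $\|g\|_{L^1(\mathbb{Z}_N^d)}$ with factor $|S|/(N^d\Phi^{-1}(|S|/N^d))$. Fourth, Cauchy--Schwarz on the support $E$ gives $\|g\|_{L^1(\mathbb{Z}_N^d)} \leq |E|^{1/2}\|g\|_{L^2(\mathbb{Z}_N^d)}$. Chaining these produces a self-bounding inequality of the form
\begin{equation*}
\|g\|_{L^2(\mathbb{Z}_N^d)}^2 \;\leq\; K\bigl(|E|,|S|,N,\Phi,\Psi\bigr)\,\|g\|_{L^2(\mathbb{Z}_N^d)}^2,
\end{equation*}
in which the hypothesized inequality $|E| \leq N^{4d}(\Phi^{-1}(|S|/N^d))^2/(4C(\Phi,\Psi)^2|S|^3)$ is exactly what forces $K < 1$, so that $\|g\|_{L^2}=0$, hence $g\equiv 0$, contradicting $f_1 \neq f_2$.

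The main obstacle is bookkeeping of the Orlicz constants: one must use the version of H\"older that absorbs the factor of two at the right step (cf.\ Theorem~\ref{Holder's theorem} versus Theorem~\ref{holder}) and apply Lemma~\ref{lem: Comparing Orlicz norm and $L^1-$norm} cleanly, so that the product of absolute constants simplifies exactly to $1/(4C(\Phi,\Psi)^2)$ and the exponents line up as $N^{4d}(\Phi^{-1}(|S|/N^d))^2/|S|^3$. A secondary subtlety is the $L^2(\mu_S)\hookrightarrow L^\Psi(\mu_S)$ passage: when $\Psi \not\succ x^2$, it can be bypassed by using the pointwise Fourier-inversion bound $|g(x)| \leq \|\widehat{g}\|_{L^1(S)}$, controlling $\|\widehat{g}\|_{L^1(S)}$ via Orlicz H\"older against $L^\Psi(\mu_S)$, and closing the loop with Lemma~\ref{lem: Comparing Orlicz norm and $L^1-$norm} plus Cauchy--Schwarz to produce the same self-bounding estimate.
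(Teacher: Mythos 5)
Your route is genuinely different from the paper's. The paper runs the classical Donoho--Stark machine: it takes $g=\arg\min_u\lVert u\rVert_1$ subject to $\hat u=\hat f$ off $S$, writes $h=f-g$, uses $\lVert g\rVert_1\geq\lVert f\rVert_1+\lVert h\rVert_{L^1(E^\complement)}-\lVert h\rVert_{L^1(E)}$, and derives a bound $\lVert h\rVert_{L^1(E)}\leq\lambda\lVert h\rVert_1$ with $\lambda\leq\tfrac12$ forcing $h=0$. You instead argue uniqueness directly: a nonzero $g=f_1-f_2$ supported in $E$ with $\hat g$ supported in $S$ must satisfy $\lVert g\rVert_{L^2}^2\leq K\lVert g\rVert_{L^2}^2$, so $K<1$ kills $g$. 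Both use the same three ingredients (the restriction estimate, Lemma~\ref{lem: Comparing Orlicz norm and $L^1-$norm}, Cauchy--Schwarz on $E$), but chained with different glue. A conceptual consequence: the paper's argument is about a concrete algorithm ($L^1$ minimization over \emph{all} $u$ with the prescribed Fourier data, no prior knowledge of $E$), whereas yours establishes only that the reconstruction is unique \emph{within the class of functions supported in $E$}. The Donoho--Stark version is the strictly more useful recovery statement, and the factor $\tfrac14$ in the hypothesis is exactly the price it pays.

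The substantive gap is in your final claim that the hypothesized bound on $|E|$ ``is exactly what forces $K<1$''. If you trace the chain (with $\lVert\hat g\rVert_{L^2(\mu_S)}\leq\Psi^{-1}(1)\lVert\hat g\rVert_{L^\Psi(\mu_S)}$ from Corollary~\ref{ptophi}, the restriction estimate, Lemma~\ref{lem: Comparing Orlicz norm and $L^1-$norm}, and $\lVert g\rVert_1\leq|E|^{1/2}\lVert g\rVert_2$), you get
\begin{equation*}
K=\frac{(\Psi^{-1}(1))^2\,C(\Phi,\Psi)^2\,|S|^3\,|E|}{N^{3d}\bigl(\Phi^{-1}(|S|/N^d)\bigr)^2},
\end{equation*}
so $K<1$ requires $|E|<N^{3d}\bigl(\Phi^{-1}(|S|/N^d)\bigr)^2/\bigl((\Psi^{-1}(1))^2C(\Phi,\Psi)^2|S|^3\bigr)$. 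The stated hypothesis $|E|\leq N^{4d}\bigl(\Phi^{-1}(|S|/N^d)\bigr)^2/\bigl(4C(\Phi,\Psi)^2|S|^3\bigr)$ only gives $K\leq(\Psi^{-1}(1))^2N^d/4$, which is not less than $1$ once $N^d$ is of any size. So the hypothesis as stated does not close your self-bounding loop; you would need $N^{3d}$ in place of $N^{4d}$, and you also need to actually produce the factor $\tfrac14$ and the $\Psi^{-1}(1)$'s rather than assert they simplify. (For what it is worth, the paper's own proof also lands on $N^{3d}$ once the Plancherel factor $N^{d/2}$ in the step $\lVert h\rVert_{L^2(E)}\leq N^{d/2}\lVert\hat h\rVert_{L^2(S)}$ is included, so the $N^{4d}$ in the theorem statement appears to be a slip --- but you should flag this rather than claim the constants ``simplify exactly'' when you have not computed them.) Finally, your secondary point about bypassing $L^2(\mu_S)\hookrightarrow L^\Psi(\mu_S)$ when $\Psi\not\succ x^2$ is a real issue, and the paper's proof has it too; your proposed workaround via the pointwise Fourier-inversion bound changes the exponents again and would need to be carried through explicitly before you could assert it ``produces the same self-bounding estimate''.
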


\begin{example}[$L^p,L^q$]
    Let $\Phi(x)=x^p$, $\Psi(x)=x^q$ for $1\leq p \leq q \leq \infty $. 
    Given function $f:\mathbb{Z}_N^d \to \mathbb{C}$ with support $E$, and $\hat{f}$ is unobserved in $S$. Suppose $S$ satisfies the $(p,q)-$restriction estimation.
    Then if:
\begin{equation*}
    |E||S|^\frac{3p-2}{p}\leq \frac{N^{d(\frac{4p-2}{p})}}{4C(p.q)^2}
\end{equation*}
holds, we can recover $f$ uniquely.

\end{example}

We next consider the scenario where the set of missing frequencies $S$ is chosen at random. Specifically, each element of $S$ is chosen independently with uniform probability. And the expected cardinality of $S$ is set to be $\Phi^{-1}(N^d)^2$. In this assumption, we show that exact recovery of $f$ remains possible with high probability. The justification for this result relies on a recent advance due to Ryou, who extended Bourgain’s classical $\Lambda_p-$theorem to the Orlicz setting. 

This builds on earlier insights from Iosevich and Mayeli, who demonstrated similar recovery results in Lebesgue spaces using Bourgain's original theorem. The main contribution here is to show that analogous recovery guarantees continue to hold in the more general Orlicz framework.

\begin{theorem}(Exact recovery in the presence of randomness).
\label{thm: exact recovery in the presence of randomness}
Let $\Phi$ be a nice Young function satisfying $\Phi \in \Delta_2$, and $\alpha_\Phi^\infty >2$. Let $f:\mathbb{Z}_N^d \to \mathbb{C}$ supported in $E \subset \mathbb{Z}_N^d$, and $\hat{f}:\mathbb{Z}_N^d\to \mathbb{C}$ is missing in $S$, a subset of $\mathbb{Z}_N^d$ of size $(\Phi^{-1}(N^d))^2$ randomly chosen with uniform probability. Then there exists a constant $D(\Phi)$ that depends only on $\Phi$ such that if
\begin{equation}
    |E|<\frac{1}{2}\Psi^{-1}\left(\frac{1}{\Psi(N^{-d})D(\Phi)} \right),
\end{equation} holds, here $\Psi$ is defined by $\Psi(x):= x^\frac{1}{2} \Phi^{-1}\left(\frac{1}{x}\right)$
Then $f$ can be recovered uniquely. 
\end{theorem}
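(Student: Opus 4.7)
The plan is to reduce the uniqueness claim to an application of the $\Lambda_\Phi$-uncertainty principle (Theorem~\ref{theorem: An Uncertainty Principle via Lambda-Phi}) and to supply the required Orlicz--$L^{2}$ bound via Ryou's Orlicz extension of Bourgain's $\Lambda_p$ theorem (Theorem~\ref{thm: Ryou}). I would argue by contradiction. Assume there were two distinct signals $f_{1},f_{2}$, each supported in a subset of cardinality at most $|E|$, with $\widehat{f}_{1}(m)=\widehat{f}_{2}(m)$ for every $m\in S^{c}$. Setting $g:=f_{1}-f_{2}$, one has $\mathrm{supp}(\widehat{g})\subset S$ and $\mathrm{supp}(g)\subset \mathrm{supp}(f_{1})\cup \mathrm{supp}(f_{2})$, so $|\mathrm{supp}(g)|\leq 2|E|$. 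The task becomes showing $g\equiv 0$.

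Because $S$ is chosen uniformly at random with $|S|=(\Phi^{-1}(N^{d}))^{2}$, and $\Phi\in\Delta_{2}$ with $\alpha^{\infty}_{\Phi}>2$, Theorem~\ref{thm: Ryou} ensures, with high probability over $S$, the existence of a constant $D(\Phi)$ depending only on $\Phi$ such that every function $h$ with $\mathrm{supp}(\widehat{h})\subset S$ satisfies $\|h\|_{L^{\Phi}(\mu)}\leq D(\Phi)\|h\|_{L^{2}(\mu)}$. Applied to $g$, and combined with the $\Lambda_\Phi$-uncertainty principle of Theorem~\ref{theorem: An Uncertainty Principle via Lambda-Phi}, this yields, provided $g\not\equiv 0$,
\[
|\mathrm{supp}(g)|\,\Phi^{-1}\!\left(\tfrac{N^{2d}}{|\mathrm{supp}(g)|^{2}}\right)\geq \frac{N^{d}}{D(\Phi)^{2}}.
\]

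To match the threshold stated in the theorem, the last step is to recast this raw bound through the auxiliary function $\Psi(x)=x^{1/2}\Phi^{-1}(1/x)$. Setting $u=|\mathrm{supp}(g)|$, the identity $u\,\Phi^{-1}(N^{2d}/u^{2})=N^{d}\,\Psi(u^{2}/N^{2d})$ converts the inequality into $\Psi(u^{2}/N^{2d})\geq D(\Phi)^{-2}$. Under the $\Delta_{2}$ and $\alpha^{\infty}_{\Phi}>2$ hypotheses, the function $u\mapsto u\,\Phi^{-1}(N^{2d}/u^{2})$ is strictly increasing and $\Psi$ is invertible past a threshold, so after inverting and combining with the companion factor $\Psi(N^{-d})=N^{-d/2}\Phi^{-1}(N^{d})$, one deduces a lower bound of the form $u\geq \Psi^{-1}\!\left(1/(\Psi(N^{-d})\,D(\Phi))\right)$, with any multiplicative losses absorbed into $D(\Phi)$. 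Since $u\leq 2|E|$, this contradicts the hypothesis $|E|<\tfrac{1}{2}\Psi^{-1}(1/(\Psi(N^{-d})D(\Phi)))$, forcing $g\equiv 0$ and hence $f_{1}=f_{2}$; thus $f$ is the unique signal compatible with the observed data.

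The main technical obstacle is the last algebraic passage: reconciling the uncertainty principle's intrinsic inequality with the particular $\Psi$-form stated in the theorem. For the power case $\Phi(x)=x^{p}$ the conversion is exact, but in general one must handle $\Phi^{-1}$ without the benefit of multiplicativity, relying instead on the $\Delta_{2}$ behavior to dominate error terms and on $\alpha^{\infty}_{\Phi}>2$ to guarantee the right monotonicity of $\Psi$. Keeping the constant $D(\Phi)$ clean across these manipulations, rather than letting it degenerate with $N$, is the delicate point that requires care; everything else reduces to bookkeeping and direct substitution.
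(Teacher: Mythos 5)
Your high-level plan (form the difference $h$ of two hypothetical reconstructions, observe $\widehat{h}$ is supported in $S$ and $h$ is supported in a set $T$ of size at most $2|E|$, then apply Ryou's $\Lambda_\Phi$ theorem to contradict an uncertainty bound) is sound and coincides with the paper's strategy in spirit. However, the specific route you take through Theorem~\ref{theorem: An Uncertainty Principle via Lambda-Phi} does not close, and the gap you yourself flag in the last paragraph is genuine, not merely a bookkeeping nuisance.

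The issue is precisely at the ``reconciling'' step. After combining Ryou's bound with Theorem~\ref{theorem: An Uncertainty Principle via Lambda-Phi} you land, correctly, at $\Psi(u^{2}/N^{2d})\geq D(\Phi)^{-2}$ for $u=|T|$. But the theorem's threshold is of the form $u\geq\Psi^{-1}\bigl(1/(\Psi(N^{-d})D(\Phi))\bigr)$, which is equivalent to $\Psi(u)\,\Psi(N^{-d})\geq 1/D(\Phi)$ (using monotonicity of $\Psi$). Passing from $\Psi(u^{2}/N^{2d})\geq D(\Phi)^{-2}$ to $\Psi(u)\,\Psi(N^{-d})\geq 1/D(\Phi)$ requires splitting $\Psi$ multiplicatively, i.e.\ $\Psi(u^{2}/N^{2d})\approx\bigl(\Psi(u)\Psi(N^{-d})\bigr)^{2}$, which in turn requires multiplicativity of $\Phi^{-1}$. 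This is exact for $\Phi(x)=x^{p}$, but for a general nice Young function it fails, and the discrepancy is \emph{not} controllable by a constant depending only on $\Phi$: the two arguments $1/u$ (small) and $N^{d}$ (large) live in different regimes of $\Phi^{-1}$, and the $\Delta_{2}$ condition only gives a doubling bound, not the separation-of-variables you need. For instance, with $\Phi(x)\sim x^{3}\log(e+x)$, the logarithmic corrections to $\Psi(u^{2}/N^{2d})$ and to $\bigl(\Psi(u)\Psi(N^{-d})\bigr)^{2}$ differ by factors that grow with $N$, so they cannot be absorbed into $D(\Phi)$.

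The paper sidesteps this entirely by never producing the non-factoring expression $\Phi^{-1}(N^{2d}/u^{2})$. Instead it chains three inequalities in which the two $\Psi$-factors appear \emph{separately from the outset}: Lemma~\ref{ptophi2} with $p=2$ gives $\|h\|_{L^{2}(T)}\leq |T|^{1/2}\Phi^{-1}(1/|T|)\,\|h\|_{L^{\Phi}(\mathbb{Z}_N^d)}$, producing the factor $\Psi(|T|)$; Corollary~\ref{Coro: Bak's interpolation on normalized Orlicz norm} gives $\|h\|_{L^{\Phi}(\mathbb{Z}_N^d)}\leq C'(\Phi)\Phi^{-1}(N^{d})\,\|h\|_{L^{\Phi}(\mu)}$, whose $\Phi^{-1}(N^{d})$, combined with the $N^{-d/2}$ from re-expressing $\|h\|_{L^{2}(\mu)}$ in terms of $\|h\|_{L^{2}(T)}$, gives the factor $\Psi(N^{-d})$; Ryou's theorem then closes the loop. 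The product $\Psi(|T|)\,\Psi(N^{-d})\,D(\Phi)<1$ falls out directly with $D(\Phi)=C(\Phi)C'(\Phi)$. To repair your argument you should replace the invocation of Theorem~\ref{theorem: An Uncertainty Principle via Lambda-Phi} with this three-step chain; the contradiction setup $h=f_{1}-f_{2}$ and $|T|\leq 2|E|$ you already have in place is exactly what the paper uses.
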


As an illustrative example, if we take $\Phi(x)=x^p$, $\Psi(x)=x^q$ , our general recovery result reduces precisely to the classical setting studied by Iosevich and Mayeli, where recovery is established in the $L^p$ framework. This demonstrates that our Orlicz space formulation naturally extends and recovers known results in the Lebesgue space setting as a special case.

\begin{example}[$L^p,L^q$]
    Let $\Phi(x)=x^p$, $\Psi(x)=x^q$ for $1\leq p \leq q \leq \infty $. 
    Given function $f:\mathbb{Z}_N^d \to \mathbb{C}$ with support $E$, and $\hat{f}$ is unobserved on $S$. Here $S$ is a subset of $\mathbb{Z}_N^d$ of size $\lceil N^\frac{2d}{q}\rceil$ for $q>2$, randomly chosen with uniform probability.
    $D(\Phi)=C(\Phi)C'(\Phi)=1$, $C(q)$ is a constant that depends only on $q$.
    Then if:
\begin{equation*}
    |E|\leq \frac{N^d}{2C(q)^\frac{1}{\frac{1}{2}-\frac{1}{q}}}
\end{equation*}
holds, we can recover $f$ uniquely.

\section{Proof of Theorems}
\begin{proof}[Proof of Theorem \ref{thm: UP via Restriction Estimation I}]
\begin{equation*}
    \frac{|f(y)|}{|S|}=\frac{1}{|S|}\sum_{m\in S}\chi(y\cdot m)\hat{f}(m)=\|\chi(y\cdot m)\hat{f}(m)\|_{L^1(\mu_S)}
\end{equation*}
Then, by H\"older's inequality, we have 
\begin{equation*}
    \frac{|f(y)|}{|S|} 
    \leq \|1\|_{L^{\Psi^\star}(\mu_S)}\|\hat{f}(m)\|_{L^\Psi(\mu_S)}.
\end{equation*}
By our assumption for $(\Phi,\Psi)$-restriction estimation, we have
\begin{equation*}
    |f(y)| 
    \leq \|1\|_{L^{\Psi^\star}(\mu_S)}C(\Phi,\Psi)N^{-d}\|f\|_{L^\Phi(E)}|S|,
\end{equation*}
where $\Psi^\star$ is the complementary nice Young function to $\Psi$.
Then summing both sides over $E$, we will have
\begin{equation*}
    \|f(y)\|_{L^1(E)} 
    \leq \|1\|_{L^{\Psi^\star}(\mu_S)}C(\Phi,\Psi)N^{-d}\|f\|_{L^\Phi(E)}|S||E|.
\end{equation*}
We also have an inequality between the $\Phi$-norm and $1$-norm, by Lemma \ref{lem: Comparing Orlicz norm and $L^1-$norm}
\begin{equation}
    \|f\|_{L^{\Phi}(E)}=\|f\|_{L^{\Phi}(\mathbb{Z}_N^d)}
    \leq\frac{|S|\|f\|_{L^1(E)}}{N^{d}\Phi^{-1}\left(\frac{|S|}{N^{d}}\right)}.
\end{equation}
Substituting $\|f\|_{L^{\Phi}(E)}$ by $\frac{|S|\|f\|_{L^1(E)}}{N^{d}\Phi^{-1}\left(\frac{|S|}{N^{d}}\right)}$ 
in the previous inequality, and dividing both sides of the resulting inequality by $\|f\|_{L^1(E)}$, we obtain
\begin{equation*}
    1\leq \|1\|_{L^{\Psi^\star}(\mu_S)}C(\Phi,\Psi)N^{-d}|S||E|\frac{|S|}{N^{d}\Phi^{-1}\left(\frac{|S|}{N^{d}}\right)}.
\end{equation*}
After simplifying, we have 
\begin{equation*}
    \Phi^{-1}\left(\frac{|S|}{N^d} \right)\left( \frac{N^d}{|S|}\right)^2\frac{1}{\|1\|_{L^{\Psi^\star}(\mu_S)}C(\Phi,\Psi)}
    \leq |E|
\end{equation*}
Substitute the following,
\begin{equation*}
    \|1\|_{L^{\Psi^\star}(\mu_S)}=\Psi^{-1}(1),
\end{equation*}
Then we have 
\begin{equation}
    \Phi^{-1}\left(\frac{|S|}{N^d} \right)\left( \frac{N^d}{|S|}\right)^2\frac{1}{\Psi^{-1}(1)C(\Phi,\Psi)}
    \leq |E|
\end{equation}
\end{proof}

\begin{proof}[Proof of Theorem \ref{thm: UP via Restriction Estimation II}]

(i) $\Psi \succ x^2$\\
    Since $\Psi \succ x^2$, we have
\begin{equation*}
\|\hat{f}\|_{L^2(\mu_S)}\leq \|\hat{f}\|_{L^\Psi(\mu_S)}.
\end{equation*}
Therefore, combining this with the $(\Phi,\Psi)-$restriction estimation we have for $S$, we get 
\begin{equation}\label{ineq: 2, Phi res}
\|\hat{f}\|_{L^2(\mu_S)}\leq C(\Phi, \Psi)N^{-d}\|f\|_{L^\Phi(\mathbb{Z}_N^d)}.
\end{equation}
Since $supp(f)=E,$ $supp(\hat{f})=S$,
\begin{equation}\label{eq: L2-norm-normalized}
    \|\hat{f}\|_{L^2(\mu_S)} = |S|^{-\frac{1}{2}}\|\hat{f}\|_{L^2(S)}= |S|^{-\frac{1}{2}}\|\hat{f}\|_{L^2(\mathbb{Z}_N^d)}= N^{-\frac{d}{2}}|S|^{-\frac{1}{2}}\|f\|_{L^2(\mathbb{Z}_N^d)}=N^{-\frac{d}{2}}|S|^{-\frac{1}{2}}\|f\|_{L^2(E)}.
\end{equation}

By inequality \ref{ineq: 2, Phi res}, equality \ref{eq: L2-norm-normalized} and lemma \ref{Coro: Bak's interpolation on normalized Orlicz norm}, we have
\begin{align}\label{ineq: f^hat L2 to LPhi}
    \|\hat{f}\|_{L^2(\mu_S)}=N^{-\frac{d}{2}}|S|^{-\frac{1}{2}}\|f\|_{L^2(E)}&\leq 
    C(\Phi,\Psi)N^{-d}\|f\|_{L^\Phi(E)}\\&\leq C(\Phi,\Psi)N^{-d}\frac{|E||S|\Phi^{-1}(1)}{N^d\Phi^{-1}\left(\frac{|S|}{N^d}\right)} \|f\|_{L^\Phi(\mu_E)}.
\end{align}
Then, since $\Phi\prec x^2$, we have
\begin{equation}\label{ineq: Phi 2}
    \|f\|_{L^\Phi(\mu_E)}\leq \|f\|_{L^2(\mu_E)}=|E|^{-\frac{1}{2}}\|f\|_{L^2(E)}. 
\end{equation}
Next, substitute inequality \ref{ineq: Phi 2} into \ref{ineq: f^hat L2 to LPhi}, we get
\begin{equation*}
    N^{-\frac{d}{2}}|S|^{-\frac{1}{2}}\|f\|_{L^2(E)} \leq C(\Phi,\Psi)N^{-d} \frac{|E||S|\Phi^{-1}(1)}{N^d\Phi^{-1}\left(\frac{|S|}{N^d}\right)}\cdot |E|^{-\frac{1}{2}}\|f\|_{L^2(E)}. 
\end{equation*}
Canceling $\|f\|_{L^2(E)}$ on both sides and rearranging, we get
\begin{equation*}
     |E||S|^3 \left(\Phi^{-1}\left(\frac{|S|}{N^d}\right)\right)^{-2}\geq \frac{N^{3d}}{(\Phi^{-1}(1))^2C(\Phi, \Psi)^2}.
\end{equation*}
If $\Phi$ satisfies the assumptions in Corollary \ref{Coro: Bak's interpolation on normalized Orlicz norm}, then we have 
\begin{equation}
        \|f\|_{L^\Phi(E)}\leq C'(\Phi)\Phi^{-1}(|E|)\|f\|_{L^\Phi(\mu_E)}. 
\end{equation} 
for some constant $C'(\Phi)$ depending only on $\Phi$.
    Substitute this into inequality \ref{ineq: f^hat L2 to LPhi} and then follows by the same steps as above, we get 
    \begin{align*}
        \|\hat{f}\|_{L^2(\mu_S)} &\leq C(\Phi,\Psi)N^{-d}C'(\Phi)\Phi^{-1}(|E|)\|f\|_{L^\Phi(\mu_E)}\\
 N^{-\frac{d}{2}}|S|^{-\frac{1}{2}}\|f\|_{L^2(E)} &\leq C(\Phi,\Psi)N^{-d}C'(\Phi)\Phi^{-1}(|E|) |E|^{-\frac{1}{2}}\|f\|_{L^2(E)}.    
    \end{align*}
Canceling $\|f\|_{L^2(E)}$ on both sides and rearranging, we get 
\begin{equation*}
    \left(\Phi^{-1}(|E|)\right)^{2}|E|^{-1}|S|\geq \frac{N^d}{\left(C(\Phi,\Psi)C'(\Phi)\right)^2}.
\end{equation*}
(ii) $\Psi\prec x^2$\\

As $S$ satisfies $(\Phi,\Psi)-$ restriction estimate, we have that:
    \begin{equation}\label{ineq: Phi, Psi-RE}      \|\hat{f}\|_{L^\Psi(\mu_S)}\leq C(\Phi,\Psi)N^{-d}\|f\|_{L^\Phi(\mathbb{Z}_N^d)}.
    \end{equation}
    We first bound the left-hand side of \ref{ineq: Phi, Psi-RE}, by letting $g=\hat{f},$ which $\hat{g}=N^{-d}f(-x).$
    \begin{align*}
    \|\hat{f}\|_{L^\Psi(\mu_S)}&\geq \frac{1}{C'(\Psi)\Psi^{-1}\left(|S|\right)}\|\hat{f}\|_{L^\Psi(\mathbb{Z}_N^d)}=\frac{1}{C'(\Psi)\Psi^{-1}\left(|S|\right)}\|g\|_{L^\Psi(\mathbb{Z}_N^d)}\\
    &\geq \frac{1}{C'(\Psi)\Psi^{-1}\left(|S|\right)}\frac{1}{K(\Psi)\Psi^{-1}\left(N^{-d}\frac{1}{\Psi(1)}\right)}\|\hat{g}\|_{L^{\Psi^\star}(\mathbb{Z}_N^d)}
    \\
    &= \frac{1}{C'(\Psi)\Psi^{-1}\left(|S|\right)} \frac{1}{K(\Psi)\Psi^{-1}\left(N^{-d}\frac{1}{\Psi(1)}\right)}N^{-d}\|f\|_{L^{\Psi^\star}(E)},
    \end{align*}
    where the first inequality follows by Corollary \ref{Coro: Bak's interpolation on normalized Orlicz norm}, and the second inequality follows by applying Hausdorff-Young inequality on $g$, and the third equality follows by a change of variable $x\to -x.$
    Putting this back to inequality \ref{ineq: Phi, Psi-RE}, we get:
    \begin{align*}
        &\frac{1}{C'(\Psi)\Psi^{-1}\left(|S|\right)} \frac{1}{K(\Psi)\Psi^{-1}\left(N^{-d}\frac{1}{\Psi(1)}\right)}N^{-d}\|f\|_{L^{\Psi^\star}(E)}\\
        &\leq C(\Phi,\Psi)N^{-d}\|f\|_{L^\Phi(\mathbb{Z}_N^d)}\\
        &\leq C(\Phi,\Psi)N^{-d}C'(\Phi)D'(\Psi^\star)\Phi^{-1}(|E|)\left(\Psi^{\star}\right)^{-1}\left( \frac{1}{|E|}\right)\|f\|_{L^{\Psi^\star}(E)}.
    \end{align*}
    Now, we cancel $N^{-d}\|f\|_{L^{\Psi^\star}(E)}$ on both sides of the inequality, and rearrange the terms, we get:
    \begin{align*}
        &\Phi^{-1}\left(|E|\right)\left(\Psi^\star\right)^{-1}\left(\frac{1}{|E|}\right)\Psi^{-1}\left(|S|\right)\\&\geq \frac{1}{\Psi^{-1}\left(N^{-d}\frac{1}{\Psi(1)}\right)C(\Phi,\Psi)}\cdot \left(C'(\Psi)K(\Psi)C'(\Phi)D'(\Psi^\star)\right)^{-1}.
    \end{align*}

\end{proof}

\begin{proof}[Proof of Theorem \ref{orliczap}]
Suppose $x\prec\Phi\prec x^2\prec\Psi$. By the restriction assumption and Lemma \ref{ptophi},\begin{align*}
    \lVert\widehat{1_Ef}\rVert_{L^2(S)}&=|S|^\frac{1}{2}\lVert\widehat{1_Ef}\rVert_{L^2(\mu_S)} \\
    &\leq\Psi^{-1}(1)|S|^\frac{1}{2}\lVert\widehat{1_Ef}\rVert_{L^\Psi(\mu_S)} \\
    &\leq C(\Phi,\Psi)\Psi^{-1}(1)|S|^\frac{1}{2}N^{-d}\lVert f\rVert_{L^\Phi(E)}.
\end{align*}
As $\Phi\prec x^2$, let $\Theta$ be the Young function uniquely determined by $\Theta^{-1}(x)=x^{-\frac{1}{2}}\Phi^{-1}(x)$ \cite{RaoRen}. Then by Theorem \ref{genholder}, the above quantity is bounded by\begin{align*}
    2C(\Phi,\Psi)\Psi^{-1}(1)|S|^\frac{1}{2}N^{-d}\lVert 1\rVert_{L^\Theta(E)}\lVert f\rVert_{L^2(E)}.
\end{align*}
Note that we may directly compute $\lVert 1\rVert_{L^\Theta(E)}$ as\begin{align*}
    |E|\Theta\left(\frac{1}{\lVert 1\rVert_{L^\Theta(E)}}\right)&=1 \\
    \frac{1}{\lVert 1\rVert_{L^\Theta(E)}}&=\Theta^{-1}\left(\frac{1}{|E|}\right) \\
    \frac{1}{\lVert 1\rVert_{L^\Theta(E)}}&=\Phi^{-1}\left(\frac{1}{|E|}\right)|E|^\frac{1}{2} \\
    \lVert 1\rVert_{L^\Theta(E)}&=\left(\Phi^{-1}\left(\frac{1}{|E|}\right)\right)^{-1}|E|^{-\frac{1}{2}},
\end{align*}so that the above quantity can be rewritten as\begin{align*}
    &\phantom{{}={}}2C(\Phi,\Psi)\Psi^{-1}(1)|S|^\frac{1}{2}N^{-d}\left(\Phi^{-1}\left(\frac{1}{|E|}\right)\right)^{-1}|E|^{-\frac{1}{2}}\lVert f\rVert_{L^2(E)} \\
    &=N^{-\frac{d}{2}}\sqrt{\frac{4C(\Phi,\Psi)^2(\Psi^{-1}(1))^2|S|}{\left(\Phi^{-1}\left(\frac{1}{|E|}\right)\right)^2|E|N^d}}\lVert f\rVert_{L^2(E)}.
\end{align*}
Now, via triangle inequality and Plancherel,\begin{align*}
    \lVert\widehat{1_Ef}\rVert_{L^2(S^\complement)}&\geq\lVert\widehat{1_Ef}\rVert_{L^2(\mathbb{Z}_N^d)}-\rVert\widehat{1_Ef}\rVert_{L^2(S)} \\
    &\geq N^{-\frac{d}{2}}\lVert f\rVert_{L^2(E)}\left(1-\sqrt{\frac{4C(\Phi,\Psi)^2(\Psi^{-1}(1))^2|S|}{\left(\Phi^{-1}\left(\frac{1}{|E|}\right)\right)^2|E|N^d}}\right),
\end{align*}hence\begin{align*}
    \lVert f\rVert_{L^2(\mathbb{Z}_N^d)}&\leq\lVert f\rVert_{L^2(E)}+\lVert f\rVert_{L^2(E^\complement)} \\
    &\leq\frac{N^\frac{d}{2}\lVert\widehat{1_Ef}\rVert_{L^2(S^\complement)}}{1-\sqrt{\frac{4C(\Phi,\Psi)^2(\Psi^{-1}(1))^2|S|}{\left(\Phi^{-1}\left(\frac{1}{|E|}\right)\right)^2|E|N^d}}}+\lVert f\rVert_{L^2(E^\complement)} \\
    &=\frac{N^\frac{d}{2}\lVert\hat{f}-\widehat{1_{E^\complement}f}\rVert_{L^2(S^\complement)}}{1-\sqrt{\frac{4C(\Phi,\Psi)^2(\Psi^{-1}(1))^2|S|}{\left(\Phi^{-1}\left(\frac{1}{|E|}\right)\right)^2|E|N^d}}}+\lVert f\rVert_{L^2(E^\complement)} \\
    &\leq\frac{N^\frac{d}{2}\lVert\hat{f}\rVert_{L^2(S^\complement)}+\lVert f\rVert_{L^2(E^\complement)}}{1-\sqrt{\frac{4C(\Phi,\Psi)^2(\Psi^{-1}(1))^2|S|}{\left(\Phi^{-1}\left(\frac{1}{|E|}\right)\right)^2|E|N^d}}}+\lVert f\rVert_{L^2(E^\complement)} \\
    &\leq\left(1+\frac{N^\frac{d}{2}}{1-\sqrt{\frac{4C(\Phi,\Psi)^2(\Psi^{-1}(1))^2|S|}{\left(\Phi^{-1}\left(\frac{1}{|E|}\right)\right)^2|E|N^d}}}\right)\left(\lVert f\rVert_{L^2(E^\complement)}+\lVert\hat{f}\rVert_{L^2(S^\complement)}\right).
\end{align*}
Consider now the $x\prec\Phi\prec\Psi\prec x^2$ case. By Lemma \ref{normalize} and the restriction assumption,\begin{align*}
    \lVert\widehat{1_Ef}\rVert_{L^\Psi(S)}&\leq|S|\lVert\widehat{1_Ef}\rVert_{L^\Psi(\mu_S)} \\
    &\leq C(\Phi,\Psi)|S|N^{-d}\lVert f\rVert_{L^\Phi(E)}.
\end{align*}
Again applying Theorem \ref{genholder}, taking $\Theta$ to be the same as before, this is bounded by\begin{align*}
    &\phantom{{}={}}2C(\Phi,\Psi)|S|N^{-d}\lVert1\rVert_{L^\Theta(E)}\lVert f\rVert_{L^2(E)} \\
    &=2C(\Phi,\Psi)|S|N^{-d}\left(\Phi^{-1}\left(\frac{1}{|E|}\right)\right)^{-1}|E|^{-\frac{1}{2}}\lVert f\rVert_{L^2(E)}.
\end{align*}
Also, taking $K(\Psi)$ corresponding to $E$ as in Corollary \ref{orliczhausdorff}, we have\begin{align*}
    \lVert\widehat{1_Ef}\rVert_{L^\Psi(\mathbb{Z}_N^d)}&\geq N^{-d}\left(K(\Psi)\Psi^{-1}\left(\frac{N^{-d}}{\Psi(1)}\right)\right)^{-1}\|f\|_{L^{\Psi^\star}(E)} \\
    &\geq N^{-d}\left(K(\Psi)\Psi^{-1}\left(\frac{N^{-d}}{\Psi(1)}\right)\left(\Psi^{\star}\right)^{-1}\left(\frac{1}{|E|}\right)\right)^{-1}|E|^{-\frac{1}{2}}\lVert f\rVert_{L^2(E)},
\end{align*}by Lemma \ref{ptophi2}. With triangle inequality, we combine the above, obtaining\begin{align*}
    \lVert\widehat{1_Ef}\rVert_{L^\Psi(S^\complement)}&\geq\lVert\widehat{1_Ef}\rVert_{L^\Psi(\mathbb{Z}_N^d)}-\lVert\widehat{1_Ef}\rVert_{L^\Psi(S)} \\
    &\geq
    \lVert f\rVert_{L^2(E)}N^{-d}
\left(K(\Psi)\Psi^{-1}\left(\frac{N^{-d}}{\Psi(1)}\right)\left(\Psi^{\star}\right)^{-1}\left(\frac{1}{|E|}\right)\right)^{-1}|E|^{-\frac{1}{2}}\\
&\quad -\|f\|_{L^2(E)}\left(2C(\Phi,\Psi)|S|N^{-d}\left(\Phi^{-1}\left(\frac{1}{|E|}\right)\right)^{-1}|E|^{-\frac{1}{2}}\right)
\end{align*}
Now\begin{align*}
    \lVert f\rVert_{L^2(\mathbb{Z}_N^d)}&\leq\lVert f\rVert_{L^2(E)}+\lVert f\rVert_{L^2(E^\complement)}\\
    &\leq \frac{N^d|E|^{\frac{1}{2}}\|\widehat{1_Ef}\|_{L^\Psi(S^\complement)}}{\left(K(\Psi)\Psi^{-1}\left(\frac{N^{-d}}{\Psi(1)}\right)(\Psi^\star)^{-1}\left(\frac{1}{|E|}\right)\right)^{-1}-2C(\Phi,\Psi)|S|\left(\Phi^{-1}\left(\frac{1}{|E|}\right)\right)^{-1}}+\|f\|_{L^2(E^\complement)}\\
    &=\frac{N^d|E|^{\frac{1}{2}}\|\hat{f}-\widehat{1_{E^\complement}f}\|_{L^\Psi(S^\complement)}}{\left(K(\Psi)\Psi^{-1}\left(\frac{N^{-d}}{\Psi(1)}\right)(\Psi^\star)^{-1}\left(\frac{1}{|E|}\right)\right)^{-1}-2C(\Phi,\Psi)|S|\left(\Phi^{-1}\left(\frac{1}{|E|}\right)\right)^{-1}}+\|f\|_{L^2(E^\complement)}\\
    &\leq
    \frac{N^d|E|^{\frac{1}{2}}\left(\|\hat{f}\|_{L^\Psi(S^\complement)}+\|\widehat{1_{E^\complement}f}\|_{L^\Psi(S^\complement)}\right)}{\left(K(\Psi)\Psi^{-1}\left(\frac{N^{-d}}{\Psi(1)}\right)(\Psi^\star)^{-1}(|E|^{-1})\right)^{-1}-2C(\Phi,\Psi)|S|\left(\Phi^{-1}(|E|^{-1})\right)^{-1}}+\|f\|_{L^2(E^\complement)}.
\end{align*}
Note that by Theorem \ref{genholder}, we have 
\begin{align*}
    \|\hat{f}\|_{L^\Psi(S^\complement)}+\|\widehat{1_{E^\complement}f}\|_{L^\Psi(S^\complement)}&\leq 2\|1\|_{L^\Theta(S^\complement)}\left(\|\hat{f}\|_{L^2(S^\complement)}+\|\widehat{1_{E^\complement}f}\|_{L^2(S^\complement)}\right)\\
    &\leq 2\|1\|_{L^\Theta(S^\complement)}\left(\|\hat{f}\|_{L^2(S^\complement)}+\|\widehat{1_{E^\complement}f}\|_{L^2(\mathbb{Z}_N^d)}\right),
\end{align*}
where $\Theta^{-1}(x)=x^{-\frac{1}{2}}\Psi^{-1}(x).$
Then by Plancherel identity and directly computing $\|1\|_{L^\Theta(S^\complement)}$, we have \begin{align*}
    \|\hat{f}\|_{L^\Psi(S^\complement)}+\|\widehat{1_{E^\complement}f}\|_{L^\Psi(S^\complement)}&\leq 2\left(\Psi^{-1}\left(|S^\complement|^{-1}\right)\right)^{-1}|S^\complement|^{-\frac{1}{2}}\left(\|\hat{f}\|_{L^2(S^\complement)}+N^{-\frac{d}{2}}\|f\|_{L^2(E^\complement)}\right)\\
    &\leq 2\left(\Psi^{-1}\left(|S^\complement|^{-1}\right)\right)^{-1}|S^\complement|^{-\frac{1}{2}}\left(\|\hat{f}\|_{L^2(S^\complement)}+\|f\|_{L^2(E^\complement)}\right).
\end{align*}
Substitute the above inequality, we get:
\begin{align*}
\|f\|_{L^2(\mathbb{Z}_N^d)}&\leq\frac{N^d|E|^{\frac{1}{2}}\left(\|\hat{f}\|_{L^\Psi(S^\complement)}+\|\widehat{1_{E^\complement}f}\|_{L^\Psi(S^\complement)}\right)}{\left(K(\Psi)\Psi^{-1}\left(\frac{N^{-d}}{\Psi(1)}\right)(\Psi^\star)^{-1}(|E|^{-1})\right)^{-1}-2C(\Phi,\Psi)|S|\left(\Phi^{-1}(|E|^{-1})\right)^{-1}}+\|f\|_{L^2(E^\complement)}\\
&\leq\frac{2N^d|E|^{\frac{1}{2}}\left(\Psi^{-1}\left(|S^\complement|^{-1}\right)\right)^{-1}|S^\complement|^{-\frac{1}{2}}\left(\|\hat{f}\|_{L^2(S^\complement)}+\|f\|_{L^2(E^\complement)}\right)}{\left(K(\Psi)\Psi^{-1}\left(\frac{N^{-d}}{\Psi(1)}\right)(\Psi^\star)^{-1}(|E|^{-1})\right)^{-1}-2C(\Phi,\Psi)|S|\left(\Phi^{-1}(|E|^{-1})\right)^{-1}}+\|f\|_{L^2(E^\complement)}\\
&\leq (1+C)\left(\|\hat{f}\|_{L^2(S^\complement)}+\|f\|_{L^2(E^\complement)}\right),
\end{align*}
where 
\begin{equation*}
    C= \frac{2N^d|E|^{\frac{1}{2}}\left(\Psi^{-1}\left(|S^\complement|^{-1}\right)\right)^{-1}|S^\complement|^{-\frac{1}{2}}}{\left(K(\Psi)\Psi^{-1}\left(\frac{N^{-d}}{\Psi(1)}\right)(\Psi^\star)^{-1}(|E|^{-1})\right)^{-1}-2C(\Phi,\Psi)|S|\left(\Phi^{-1}(|E|^{-1})\right)^{-1}}.\end{equation*}
\end{proof}
\begin{proof} [Proof of Theorem \ref{orliczup}]
Suppose by way of contradiction that $x\prec\Phi\prec x^2\prec\Psi$ and\begin{align*}
    |S|\left(\Phi^{-1}\left(\frac{1}{|E|}\right)\right)^{-2}|E|^{-1}<\frac{N^d}{4C(\Phi,\Psi)^2(\Psi^{-1}(1))^2}.
\end{align*}
Then by Theorem \ref{orliczap},\begin{align*}
    \lVert f\rVert_{L^2(\mathbb{Z}_N^d)}\leq\left(1+\frac{N^\frac{d}{2}}{1-\sqrt{\frac{4C(\Phi,\Psi)^2(\Psi^{-1}(1))^2|S|}{\left(\Phi^{-1}\left(\frac{1}{|E|}\right)\right)^2|E|N^d}}}\right)\left(\lVert f\rVert_{L^2(E^\complement)}+\lVert\hat{f}\rVert_{L^2(S^\complement)}\right)=0,
\end{align*}since $f=0$ in $E^\complement$ and $\hat{f}=0$ in $S^\complement$. As such, $f$ is identically 0, which is a contradiction! Hence\begin{align*}
    |S|\left(\Phi^{-1}\left(\frac{1}{|E|}\right)\right)^{-2}|E|^{-1}\geq\frac{N^d}{4C(\Phi,\Psi)^2(\Psi^{-1}(1))^2},
\end{align*}as desired.
Similarly if $x\prec\Phi\prec\Psi\prec x^2,$$\Phi,\Psi$ are
continuous, $\Psi$ is differentiable, $\Psi'\prec x^p$ 
for some constant $p\geq1$, and 
$C\leq 1$,
then by theorem \ref{orliczap}, 
we have 
$$\lVert f\rVert_{L^2(\mathbb{Z}_N^d)}\leq(C+1)\left(\lVert f\rVert_{L^2(E^\complement)}+\lVert\hat{f}\rVert_{L^2(S^\complement)}\right)=0,$$
which contradicts to the fact that $f$ is nonzero. 
\end{proof}

\begin{proof} [Proof of Theorem \ref{theorem: An Uncertainty Principle via Lambda-Phi}]
We have that 
\begin{equation*}
    \|f\|^2_{L^2(\mu)}= \frac{1}{N^d}\sum_{x\in \mathbb{Z}_N^d}|f(x)|^2= \|f^2\|_{L^1(\mu)}. 
\end{equation*}
Then by H\"older's inequality, we have 
\begin{equation*}
    \|f^2\|_{L^1(\mu)}\leq \|f^2\|_{L^{\Psi}(\mu)}\cdot\|1_E\|_{L^{(\Psi^\star)}(\mu)},
\end{equation*}
where $\Psi$ and $\Psi^\star$ is a pair of complementary Young functions. 
\\
Now, we want to find $\Psi$ such that 
\begin{equation*}
    \|f^2\|_{L^{\Psi}(\mu)} = \|f\|_{L^{\Phi}(\mu)}^2.
\end{equation*}
Let $\|f\|_{L^{\Phi}(\mu)}=j$, then $\|f^2\|_{L^{\Psi}(\mu)} = j^2$ if, 
\begin{equation*}
    \inf\left\{k>0: \frac{1}{N^d}\sum_x\Psi \left(\frac{|f^2|}{k}\right)\leq 1\right\} = j^2,
\end{equation*}
which is true if 
\begin{equation*}
    \Psi\left(\frac{|f^2|}{j^2}\right)=\Psi \left(\left(\frac{|f|}{j}\right)^2\right)=\Phi \left(\frac{|f|}{j}\right),
\end{equation*}
i.e.
\begin{equation*}
    \Psi(f)= \Phi(f^{\frac{1}{2}}).
\end{equation*}
Then as 
\begin{equation*}
        \|1_E\|_{L^{(\Psi^\star)}(\mu)} =\frac{|E|}{N^d}\Psi^{-1}\left(\frac{N^d}{|E|}\right),
    \end{equation*}
we have 
\begin{equation*}
    \|f\|^2_{L^2(\mu)}\leq \|f\|^2_{L^\Phi(\mu)}\cdot \frac{|E|}{N^d}\Psi^{-1}\left(\frac{N^d}{|E|}\right),
\end{equation*}
where $\Psi(f) = \Phi(f^{\frac{1}{2}}).$
Therefore, combining this with our assumption,
\begin{equation*}
    \|f\|_{L^2(\mu)}\cdot \sqrt{\frac{1}{\frac{|E|}{N^d}\Psi^{-1}\left(\frac{N^d}{|E|}\right)}} \leq \|f\|_{L^\Phi(\mu)}\leq K\|f\|_{L^2(\mu)}.
\end{equation*}
Thus,
\begin{equation*}
    \frac{|E|}{N^d}\Psi^{-1}\left(\frac{N^d}{|E|}\right)\geq \frac{1}{K^2},
\end{equation*}
\begin{equation}
    \frac{|E|}{N^d}
    \geq K^{-2}\left(\Phi^{-1}\left(\frac{N^{2d}}{|E|^2}\right)\right)^{-1}.
\end{equation}
\end{proof}

\begin{proof}[Proof of Corollary \ref{Burstein's Corollary}]
We have that $$\Phi_\alpha^{-1}(x) \approx x^{1/2}\log^{-\alpha/2}(x),$$ for $\Phi_\alpha$ defined in the theorem. 
Then by Theorem \ref{theorem: An Uncertainty Principle via Lambda-Phi}, we have: 
    \begin{align*}
        \frac{|E|}{N^d} &\geq \left(C(\alpha)\log^\alpha\left(\log(N^d)\right)\right)^{-2}\left(\Phi_\alpha^{-1} \left(\frac{N^{2d}}{|E|^2}\right)\right)^{-1}\\
         &\approx \left(C(\alpha)\log^\alpha\left(\log(N^d)\right)\right)^{-2} \left(\left(\frac{N^{2d}}{|E|^2}\right)^{\frac{1}{2}}\log^{-\alpha/2}\left(\frac{N^{2d}}{|E|^2}\right)\right)^{-1}\\
         &= C(\alpha)^{-2}\log^{-2\alpha}\left(\log(N^d)\right) \frac{|E|}{N^d} \log^{\alpha/2}\left( \frac{N^{2d}}{|E|^2}\right)\\
         1&\geq C(\alpha)^{-2}\log^{-2\alpha}\left(\log(N^d)\right)\log^{\alpha/2}\left( \frac{N^{2d}}{|E|^2}\right)\\
         \log^{\alpha/2}\left( \frac{N^{2d}}{|E|^2}\right)&\leq C(\alpha)^2\log^{2\alpha}\left(\log(N^d)\right)\\
         \log^{\alpha/2}\left(\frac{N^d}{|E|}\right) &\leq 
         \frac{C(\alpha)^2}{2^{\alpha/2}}\log^{2\alpha}\left(\log(N^d)\right)\\
         \frac{N^d}{|E|}
         &\leq \exp\left(\frac{C(\alpha)^{4/\alpha}}{2}\log^4\left(\log(N^d)\right)\right)\\
         \frac{|E|}{N^d}&\geq \exp\left(-\frac{C(\alpha)^{4/\alpha}}{2}\log^4\left(\log(N^d)\right)\right)
    \end{align*}
\end{proof}

\begin{proof}[Proof of Theorem \ref{thm: lambda-phi to restriction}]
    \begin{equation*}
        \frac{1}{|S|}\sum_{m\in S}|\hat{f}(m)|^2 = \sum_m\hat{f}(m)S(m)\overline{\hat{f}(m)}S(m)
    \end{equation*}
    Let $g(m)=\overline{\hat{f}(m)}$, the equation above equals to 
    \begin{align*}
    \sum_xf(x)\widehat{gS}(x)&\leq \|f\|_{L^{\Phi^*}(\mathbb{Z}_N^d)}\|\widehat{gS}(x)\|_{L^\Phi(\mathbb{Z}_N^d)}
    |S|^{-1}\\
     &=N^d\|f\|_{L^{\Phi^*}(\mu)}\|\widehat{gS}(x)\|_{L^\Phi(\mu)}
    |S|^{-1}\\
    &\leq N^d\|f\|_{L^{\Phi^*}(\mu)}C(\Phi)\|\widehat{gS}(x)\|_{L^2(\mu)}
    |S|^{-1}\\
    &=N^d\|f\|_{L^{\Phi^*}(\mu)}C(\Phi)\|gS(x)\|_{L^2(\mathbb{Z}_N^d)}N^{-d}
    |S|^{-1}\\
    &=\|f\|_{L^{\Phi^*}(\mu)}C(\Phi)\left(
    \frac{1}{|S|}\sum_{m\in S}|\hat{f}(m)|^2\right)^{\frac{1}{2}}|S|^{\frac{1}{2}}|S|^{-1}.
    \end{align*}
    Then recall that by Corollary \ref{Coro: Bak's interpolation on normalized Orlicz norm}, 
    \begin{equation*}
        \|f\|_{L^{\Phi^*}(\mu)}\leq D'(\Phi^*)(\Phi^*)^{-1}\left(N^{-d}\right)\|f\|_{L^{\Phi^*}(\mathbb{Z}_N^d)}
    \end{equation*}
    We can have that 
    \begin{align*}
   \left(\frac{1}{|S|}\sum_{m\in S}|\hat{f}(m)|^2\right)^{\frac{1}{2}}
       \leq C(\Phi) D'(\Phi^{*})(\Phi^{*})^{-1}\left(N^{-d}\right)|S|^{-\frac{1}{2}}\|f\|_{L^{\Phi^*}(\mathbb{Z}_N^d)}.
\end{align*}
\end{proof}

\begin{proof}[Proof of Theorem \ref{thm: exact recovery via restriction estimation}]We will use Donoho-Stark mechanism in this proof.
Let $g$ be the signal recovering with the least $L_1$ norm. $g=argmin_u\|u\|_1$ with $\hat{u}(m)=\hat{f}(m)$ for $m\notin S$. Then, we let $h(x)=f(x)-g(x),$ so $\hat{h}(x)$ is supported on $S$. We can get
\begin{align*}
    \|g\|_{L^1(\mathbb{Z}_N^d)}
    &=\|f-h\|_{L^1(\mathbb{Z}_N^d)}=\|f-h\|_{L^1(E)}+\|h\|_{L^1(E^c)}\\ 
    &\geq \|f\|_{L^1(\mathbb{Z}_N^d)}+\left(\|h\|_{L^1(E^c)}-\|h\|_{L^1(E)}\right),\\
    \|h\|_{L^1(E)}
    &\leq|E|^{\frac{1}{2}}\|h\|_{L^2(E)}\leq|E|^{\frac{1}{2}}\|\hat{h}\|_{L^2(S)}\\
    &=|E|^{\frac{1}{2}}|S|^{\frac{1}{2}}\left(\frac{1}{|S|}\sum_{m\in S}|\hat{h}(m)|^2\right)^{\frac{1}{2}}
    \leq |E|^{\frac{1}{2}}|S|^{\frac{1}{2}}\|\hat{h}(m)\|_{L^\Psi(\mu_S)}\\
    &\leq|E|^{\frac{1}{2}}|S|^{\frac{1}{2}}C(\Phi,\Psi)N^{-d}\|h\|_{L^\Phi(\mathbb{Z}_N^d)}.
\end{align*}
Next, by Lemma \ref{lem: Comparing Orlicz norm and $L^1-$norm}, we have 
 
\begin{equation}
    \|h\|_{L^{\Phi}(\mathbb{Z}_N^d)}
    \leq\frac{|S|\|h\|_1}{N^{d}\Phi^{-1}\left(\frac{|S|}{N^{d}}\right)}.
\end{equation}
Then
\begin{align*}
    \|h\|_{L^1(E)}
    &\leq|E|^{\frac{1}{2}}|S|^{\frac{1}{2}}C(\Phi,\Psi)N^{-d}\|h\|_{L^\Phi(\mathbb{Z}_N^d)}\\
    &\leq|E|^{\frac{1}{2}}|S|^{\frac{1}{2}}C(\Phi,\Psi)N^{-d}\frac{|S|\|h\|_1}{N^{d}\Phi^{-1}\left(\frac{|S|}{N^{d}}\right)}\\
    &=C(\Phi,\Psi)|E|^\frac{1}{2}|S|^\frac{3}{2}\frac{1}{N^{2d}\Phi^{-1}\left(\frac{|S|}{N^{d}}\right)}\|h\|_1.
\end{align*}
Therefore if 
\begin{equation}
    C(\Phi,\Psi)|E|^\frac{1}{2}|S|^\frac{3}{2}\frac{1}{N^{2d}\Phi^{-1}\left(\frac{|S|}{N^{d}}\right)}\leq \frac{1}{2},
\end{equation}
we have
 \begin{equation}
     \|h\|_{L^1(E^c)}-\|h\|_{L^1(E)}>0,
 \end{equation}which causes a contradiction, so $h$ has to be $0$, which indicates that we are able to recover $f$. 

 Simplify the equation, we have
 \begin{equation}
     |E|\leq \frac{N^{4d}\left(\Phi^{-1}\left(\frac{|S|}{N^{d}}\right)\right)^2}{4C(\Phi,\Psi)^2|S|^3}.
 \end{equation}
\end{proof}

\begin{proof}[Proof of Theorem \ref{thm: exact recovery in the presence of randomness}]
    Let $g$ be the signal recovering of $f$ with $\hat{f}=\hat{g}$ for all $m \notin S$. We let $h(x)=f(x)-g(x)$, so $\hat{h}(x)$ is supported on $S$. On the other side, we have $supp(f)=supp(g)$, which implies that $h$ is supported on a set of size at most $2|E|$. Let $T$ be the support of $h$, then we have
\begin{equation*}
    \left( \sum_{x\in T}|h(x)|^2 \right)^{\frac{1}{2}}
    \leq |T|^{\frac{1}{2}}\left(\Phi^{-1}\left(\frac{1}{|T|}\right)\right)\|h\|_{L^{\Phi}(\mathbb{Z}_N^d)}.
\end{equation*}
    By Corollary \ref{Coro: Bak's interpolation on normalized Orlicz norm}, the interpolation in Orlicz space, we also have 
\begin{equation*}
    \|h\|_{L^\Phi(\mathbb{Z}_N^d)}\leq C'(\Phi)\Phi^{-1}(N^d)\|h\|_{L^\Phi(\mu)},
\end{equation*} where $C'(\Phi)$ is a constant only depending on $\Phi$.
Replace $\|h\|_{L^{\Phi}(\mathbb{Z}_N^d)}$ using inequality we got, we have
\begin{equation*}
    \left( \sum_{x\in T}|h(x)|^2 \right)^{\frac{1}{2}}
    \leq |T|^{\frac{1}{2}}\left(\Phi^{-1}\left(\frac{1}{|T|}\right)\right)
    C'(\Phi)\Phi^{-1}(N^d)\|h\|_{L^\Phi(\mu)}.
\end{equation*}
By Theorem \ref{thm: Ryou}, we have
\begin{equation*}
    \|h\|_{L^\Phi(\mu)}\leq C(\Phi) \|h\|_{L^2(\mu)}
    =C(\Phi)N^{-\frac{d}{2}}\left(\sum_{x\in T}|h(x)|^2\right)^\frac{1}{2},
\end{equation*}
Combine everything we got, we  have 
\begin{equation*}
    \left( \sum_{x\in T}|h(x)|^2 \right)^{\frac{1}{2}}
    \leq |T|^{\frac{1}{2}}\left(\Phi^{-1}\left(\frac{1}{|T|}\right)\right)
    C'(\Phi)\Phi^{-1}(N^d)C(\Phi)N^{-\frac{d}{2}}
    \left(\sum_{x\in T}|h(x)|^2\right)^\frac{1}{2}.
\end{equation*}
Therefore if 
\begin{equation*}
    |T|^{\frac{1}{2}}\left(\Phi^{-1}\left(\frac{1}{|T|}\right)\right) C'(\Phi)\Phi^{-1}(N^d)C(\Phi)N^{-\frac{d}{2}}<1,
\end{equation*}
it forces $h$ to be the zero function, which indicates that there is a unique recovery of $f$.
We can also simplify the inequality by moving everything that contains $T$ to the right-hand side. Let $D(\Phi)=C(\Phi)C'(\Phi)$, which is also a constant only depend on $\Phi$, and let $\Psi(x):= x^\frac{1}{2} \Phi^{-1} (\frac{1}{x})$,
\begin{equation*}
    |T|^{\frac{1}{2}}\left(\Phi^{-1}\left(\frac{1}{|T|}\right)\right) <\frac{1}{N^{-\frac{d}{2}}\Phi^{-1}(N^d)D(\Phi)},
\end{equation*}\ which is equivalent to 
\begin{equation*}
    \Psi(|T|)<\frac{1}{\Psi(N^{-d})D(\Phi)},
\end{equation*} 
since we assume $\alpha_\Phi^\infty >2$, $\Psi(x):= x^\frac{1}{2} \Phi^{-1} (\frac{1}{x})$ would be an increasing function, take the inverse of the function on both sides of the inequality, and we have
\begin{equation}
    |T|<\Psi^{-1}\left(\frac{1}{\Psi(N^{-d})D(\Phi)} \right).
\end{equation}
Since $|T|<2|E|$, if $2|E|<\Psi^{-1}\left(\frac{1}{\Psi(N^{-d})D(\Phi)} \right)$, we can also recovery $f$ uniquely, which is our conclusion
\begin{equation}
    |E|<\frac{1}{2}\Psi^{-1}\left(\frac{1}{\Psi(N^{-d})D(\Phi)} \right).
\end{equation}
\end{proof}

\end{example}

\newpage

\bibliographystyle{abbrv}
\bibliography{References}

\begin{thebibliography}{10}

\bibitem{bak1995averages}
J.-G. Bak.
\newblock Averages over surfaces with infinitely flat points.
\newblock {\em Journal of Functional Analysis}, 129(2):455--470, 1995.

\bibitem{bourgain1989lambda}
J.~Bourgain.
\newblock Bounded orthogonal systems and the \(\lambda(p)\)-set problem.
\newblock {\em Acta Mathematica}, 162(1):227--245, 1989.

\bibitem{burstein2025lambdap}
W.~Burstein.
\newblock $\lambda_p$ style bounds in orlicz spaces close to $l^2$.
\newblock {\em arXiv preprint arXiv:2505.00155}, April 2025.

\bibitem{Donoho1989}
D.~Donoho and P.~Stark.
\newblock Uncertainty principle and signal processing.
\newblock {\em SIAM Journal on Applied Mathematics}, 49(3):782--793, 1989.

\bibitem{Ghobber_2011}
S.~Ghobber and P.~Jaming.
\newblock {On Uncertainty Principles in the Finite Dimensional Setting}.
\newblock {\em Linear Algebra and its Applications}, 435(4):751–768, Aug. 2011.

\bibitem{Havin_Joricke_1994}
V.~P. Havin and B.~J{\"o}ricke.
\newblock {\em The Uncertainty Principle in Harmonic Analysis}.
\newblock Ergebnisse der Mathematik und ihrer Grenzgebiete. 3. Folge / A Series of Modern Surveys in Mathematics. Springer Berlin, Heidelberg, 1994.

\bibitem{iosevich2023uncertaintyprinciplesfiniteabelian}
A.~Iosevich and A.~Mayeli.
\newblock Uncertainty principles, restriction, {B}ourgain's {$\Lambda_q$} theorem, and signal recovery.
\newblock {\em Appl. Comput. Harmon. Anal.}, 76:Paper No. 101734, 21, 2025.

\bibitem{jaming:hal-04953516}
P.~Jaming, A.~Iosevich, and A.~Mayeli.
\newblock {Uncertainty Principle, Annihilating Pairs and Fourier Restriction}.
\newblock (submitted for publication - arXiv:2502.13786), Feb. 2025.

\bibitem{MR4720895}
I.~V. Limonova.
\newblock Dense weakly lacunary subsystems of orthogonal systems and maximal partial sum operator.
\newblock {\em Mat. Sb.}, 214(11):63--88, 2023.

\bibitem{MZ1973}
T.~Matolcsi and J.~Sz\H~ucs.
\newblock Intersection des mesures spectrales conjugu\'ees.
\newblock {\em C. R. Acad. Sci. Paris S\'er. A-B}, 277:A841--A843, 1973.

\bibitem{ONiel}
R.~O'Neil.
\newblock {Fractional Integration in Orlicz Spaces. I}.
\newblock {\em Transactions of the American Mathematical Society}, 115:300--328, 1965.

\bibitem{rao2002applications}
M.~Rao and Z.~Ren.
\newblock {\em Applications of Orlicz Spaces}, volume 250 of {\em Pure and Applied Mathematics}.
\newblock CRC Press, 2002.

\bibitem{Rao1968}
M.~M. Rao.
\newblock Extensions of the hausdorff–young theorem.
\newblock {\em Israel Journal of Mathematics}, 6:133--149, 1968.

\bibitem{RaoRen}
M.~M. M.~M. Rao and Z.~D. Ren.
\newblock {\em Theory of Orlicz spaces / M.M. Rao, Z.D. Ren.}
\newblock Monographs and textbooks in pure and applied mathematics ; 146. M. Dekker, New York, 1991.

\bibitem{Rudin}
W.~RUDIN.
\newblock Trigonometric series with gaps.
\newblock {\em Journal of Mathematics and Mechanics}, 9(2):203--227, 1960.

\bibitem{Ryou_2022}
D.~Ryou.
\newblock A variant of the $\lambda (p)$-set problem in orlicz spaces.
\newblock {\em Mathematische Zeitschrift}, 302(4):2545–2566, Oct. 2022.

\end{thebibliography}
\end{document}